\def\P{{\mathbb P}}
\def\N{{\mathbb N}}
\def\B{{\mathcal B}}
\def\sC{{\mathscr C}}
\def\e{{\varepsilon}}
\def\field{{K}}
\newcommand{\K}[1]{\left( #1 \right)}
\newcommand{\F}[2]{\frac{#1}{#2}}
\newcommand{\M}[2]{\left\{\left. #1 \; \right| \; #2 \right\}}
\newcommand{\sM}{M}
\newcommand{\A}{\alpha}
\newcommand{\sI}{\mathcal{I}}
\newcommand{\sS}{\mathcal{S}}
\newcommand{\sT}{\mathcal{T}}
\newcommand{\sA}{\mathcal{A}}
\newcommand{\sB}{\mathcal{B}}
\newcommand{\txt}[1]{\; \textrm{#1} \;}
\def\set#1{\left\{ {#1} \right\}}
\def\setof#1#2{{\left\{#1\,:\,#2\right\}}}
\theoremstyle{plain}
\newtheorem{thm}{Theorem}[section]
\newtheorem{cor}[thm]{Corollary}
\newtheorem{prop}[thm]{Proposition}
\newtheorem{lem}[thm]{Lemma}
\theoremstyle{definition}
\newtheorem{defn}[thm]{Definition}
\newtheorem{notation}[thm]{Notation}
\newtheorem{rem}[thm]{Remark}
\title{Containment problem for points on a reducible conic in $\P^2$}
\author{Annika Denkert}
\email{andenken@gmx.de}
\address{Department of Mathematics, University of Nebraska, Lincoln, Nebraska 68588-0130}
\author{Mike Janssen}
\email{janssen@huskers.unl.edu}
\address{Department of Mathematics, University of Nebraska, Lincoln, Nebraska 68588-0130}
\thanks{The authors wish to thank Brian Harbourne and Susan Cooper for their aid and suggestions in this work, and the referee for his carefully considered and helpful comments.}
\begin{document}

\maketitle

\begin{abstract}
Given an ideal $I$ in a Noetherian ring, one can ask the containment question: for which $m$ and $r$ is the symbolic power $I^{(m)}$ contained in the ordinary power $I^r$? C. Bocci and B. Harbourne study the containment question in a geometric setting, where the ideal $I$ is in a polynomial ring over a field. Like them, we will consider special geometric constructs. In particular, we obtain a complete solution in two extreme cases of ideals of points on a pair of lines in $\P^2$; in one case, the number of points on each line is the same, while in the other all the points but one are on one of the lines.

%
%
\end{abstract}




\section{Introduction}

\subsection{Background}

Let $I \subseteq R = \field[\P^N]$ be a nontrivial homogeneous ideal.
If $I$ defines a set of points $p_1,p_2,\ldots,p_r \in \P^N$ (i.e., $I = \cap_i I(p_i)$, where $I(p_i)$ is the ideal generated by forms vanishing at $p_i$), then the $m$th symbolic power of $I$ is $I^{(m)} = \cap_i I(p_i)^m$.
Note that there is a more general definition of the symbolic power which is studied in \cite{ELS,HochsterHuneke1,HochsterHuneke-Fine}, among others.
It is not difficult to see that, if $I$ is the ideal of points in $\P^N$, we have $I^r
\subseteq I^{(r)} \subseteq I^{(m)}$ if and only if $r\geq m$; for the reverse containment, it is not difficult to see that $I^{(m)} \subseteq I^r$ implies $m\geq r$, but the converse is not true in general.
Using multiplier ideals and tight closure, respectively, \cite{ELS,HochsterHuneke1} proved, as a special case of a more general result, that, for a nontrivial homogeneous ideal $I\subseteq \field[\P^N]$ (where $\field$ is a field of arbitrary characteristic), $I^{(rN)} \subseteq I^r$.
In \cite{BocciHarbourne1,BocciHarbourne-Resurgence}, the question of when the symbolic power of an ideal $I$ is contained in an ordinary power is asked and answered in several cases; one such case is when $I$ is the ideal of points lying on a smooth conic in $\P^2$.
When the conic is not smooth (and hence consists of a pair of lines), the question of $I^r$ containing the symbolic power $I^{(m)}$ is more delicate, and depends on the number of points on each line, and the existence (or lack thereof) of a point at the intersection of the two lines.
A related problem, studied in \cite{BocciHarbourne1,BocciHarbourne-Resurgence}, is to compute an asymptotic quantity known as the resurgence:

\begin{defn}
Given a homogeneous ideal $I$ in $R = \field[\P^2] = \field[x,y,z]$, the resurgence, denoted $\rho(I)$, is the quantity:
\[
\rho(I) = \sup\M{m/r}{I^{(m)} \not\subseteq I^r}.
\]
\end{defn}

Recall that for non-trivial homogeneous ideals $I$ in $\field[\P^N]$, we have by \cite{ELS,HochsterHuneke1} that $I^{(rN)} \subseteq I^r$, and thus $\rho(I) \leq N$ always.
For particular ideals, however, sharper bounds and explicit computations of $\rho(I)$ in $\P^2$ are sometimes possible, though there is no known method of computing $\rho(I)$ that works in general.
In addition to computing $\rho(I)$ for ideals $I$ for two different configurations of points lying on a pair of lines, we affirmatively answer several questions of \cite{HarbourneHuneke,BCH} for the ideals defining our configurations.

\subsection{Preliminaries}

Throughout the remainder, $I$ is a nontrivial homogeneous ideal in $R = \field[\P^2] = \field[x,y,z]$, where $\field$ is a field of arbitrary characteristic.
Our primary goal is to give the best possible description of the set of all $m$ and $r$ for which $I^{(m)} \subseteq I^r$ if $I$ is a radical ideal defining either of the configurations of points in $\P^2$ found in Figure \ref{OurCases}.
Given a set of distinct points $p_1,p_2,\ldots,p_r\in \P^2$, we denote the scheme-theoretic union $Z$ of the points by $Z = p_1 + p_2 + \cdots + p_r$.
In order to more easily refer to these different situations in the future, we make the following definitions.

\begin{center}
\begin{figure}
\begin{tikzpicture}
\filldraw [black] (-5,0) circle (1pt)
(-4.5,0) circle (1pt)
(-4,0) circle (1pt)
(-3.5,0) circle (1pt)
(-4,1) circle (1pt)
(-3,0) circle (1pt);
\draw (-6,0) -- (-2,0);
\node[] (note2) at (-4,-1])
   {(a)};
\node[] (note3) at (5,-1])
   {(b)};
\filldraw [black] (5,0) circle (2pt)
(5.5,0) circle (1pt)
(6,0) circle (1pt)
(5.75,0) circle (1pt)
(6.25,0) circle (1pt)
(5,0.5) circle (1pt)
(5,0.75) circle (1pt)
(5,1) circle (1pt)
(5,1.25) circle (1pt);
\draw (4,0) -- (7,0);
\draw (5,-0.5) -- (5,1.75);
\draw (6.5,-0.25) -- (4.75,1.5);
\end{tikzpicture}
\caption{Almost collinear points (a); Nearly-complete intersection (b)}\label{OurCases}
\end{figure}
\end{center}
\vspace{-.2in}

\begin{defn}\label{DefnAC}
Let $Z = p_0 + p_1 + \cdots + p_n$ be a zero-dimensional subscheme
of $\P^2$, where $n \geq 2$. We call $Z$ an almost
collinear subscheme of $n+1$ points (or just an almost collinear
subscheme) if $p_1,p_2,\ldots,p_n$ lie on a line $L$ and $p_0$ does
not.
\end{defn}


\begin{defn}\label{DefnNCI}
Let $Z = p_0 + p_1 + p_2 + \cdots + p_{2n}$ be a zero-dimensional
subscheme of $\P^2$ with $n\geq 1$. We call $Z$ a nearly-complete intersection of
$2n+1$ points (or just a nearly-complete intersection) if there
exists a pair of lines $L_1$ and $L_2$ such that $p_0$ is the point
at the intersection of $L_1$ and $L_2$, $p_1,p_2,\ldots,p_n \in
L_1\setminus L_2$ and $p_{n+1},p_{n+2},\ldots,p_{2n} \in
L_2\setminus L_1$.
\end{defn}

The reason for the name given in Definition \ref{DefnAC} should be clear; without $p_0$, the configuration given in Definition \ref{DefnNCI} is a complete intersection.

\begin{rem}
A single point on a pair of lines is a complete intersection.
The ideal $I$ of a complete intersection is known to satisfy $I^m = I^{(m)}$ for all $m$ and hence $I^{(m)} \subseteq I^r$ if and only if $m\geq r$ (see Lemma 5 and Theorem 2 of Appendix 6 of \cite{Zariski-Samuel}).
Thus, we will not be interested in almost collinear subschemes $Z = p_0 + p_1 + \cdots + p_n$ when $n\leq 1$.
Moreover, the case that $n=2$ is by now well understood, and so will also not be of interest; see \cite{BocciHarbourne1,BocciHarbourne-Resurgence,BCH} for results in this case.
For the same reason, we will not consider nearly-complete intersections $Z = p_0 + p_1 + \cdots + p_{2n} \subseteq \P^2$ unless $n > 1$.
\end{rem}

Note that among reduced subschemes consisting of finitely many points on a pair of lines--but which are not complete intersections--the almost collinear case and the nearly-complete intersection case represent opposite extremes.
In the nearly-complete intersection situation, aside from the point at the origin, we have an equal number of points on each line.
In the almost collinear intersection situation, we are as far as possible from an equal number of points on each line (without being a set of collinear points).
Thus, it is not surprising that our results in the two cases are quite distinct, as indicated, for example, in Theorem \ref{ThmEarlyMainResult}, which shows that the solution for the almost collinear situation depends on the number of points on the line, whereas the solution for the nearly-complete intersection situation has no such dependency.

There is, however, an underlying similarity in both cases.
Whether $Z$ is an almost collinear subscheme or a nearly-complete intersection, $Z$ is the scheme-theoretic union of a complete intersection with a single point.
In both situations, we use this to find a vector space basis for $\field[x,y,z]$ which makes it easy to compare the symbolic and ordinary powers of the ideal $I(Z)$.



We can give a complete answer to the containment problem for almost collinear points and nearly-complete intersection:



\begin{thm}\label{ThmEarlyMainResult}
Let $I\subseteq \field[\P^2] = \field[x,y,z]$, where $\field$ is a field, be a homogeneous ideal of points.
Then:
\begin{enumerate}[(a)]
\item If $I$ defines $n+1$ almost collinear points, where $n\geq 3$, $\rho(I) = \frac{n^2}{n^2-n+1}$.
Moreover, $I^{(m)} \not\subseteq I^r$ holds if and only if  $m \leq \frac{n^2 r - n}{n^2-n+1}$.
%

\item If $I$ defines a nearly-complete intersection of $2n+1$ points, then $\rho(I) = \frac{4}{3}$.
Moreover, $I^{(m)}\subseteq I^r$ if and only if $4r\leq 3m+1$.
\end{enumerate}

\end{thm}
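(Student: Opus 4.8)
The plan is to treat both parts uniformly via the remark already made: $Z=Y\cup\{p_0\}$ with $Y$ a complete intersection and $p_0$ a single reduced point, so that $I(Z)^{(m)}=I(Y)^m\cap I(p_0)^m$ (the symbolic and ordinary powers of a complete intersection agree). I will choose coordinates in which $R=\field[x,y,z]$ carries a vector space basis that is ``monomial'' for $I(Z)^{(m)}$ and $I(Z)^r$ simultaneously, reduce the containment $I^{(m)}\subseteq I^r$ to a finite system of inequalities among basis indices, solve it, and then read off $\rho(I)$ as the supremum of $m/r$ over the non-containments. Throughout, $t_+:=\max(t,0)$.

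For case (a), a linear change of coordinates puts the $n$ collinear points at $[0:b_i:1]$ on $L=\{x=0\}$ and $p_0=[1:0:0]$; let $h=\prod_{i=1}^n(y-b_iz)$, a degree-$n$ form in $y,z$ that is monic in $y$. Intersecting $I(Y)=(x,h)$ with $I(p_0)=(y,z)$ (and using $h\in(y,z)$) gives $I=I(Z)=(xy,xz,h)$. Now $\field[y,z]$ has the basis $\{h^jy^sz^t:j,t\ge0,\ 0\le s\le n-1\}$, so $R=\field[x][y,z]$ has the basis $\{x^kh^jy^sz^t\}$, and since the natural generators of $I^r$, of $(x,h)^m$, and of $(y,z)^m$ all respect the grading $R=\bigoplus_{k\ge0}x^k\field[y,z]$, one obtains the $x$-degree-wise formulas
\[
[I^r]_k=h^{(r-k)_+}\mathfrak n^{\min(k,r)},\qquad
[I^{(m)}]_k=h^{(m-k)_+}\mathfrak n^{\min\bigl(m,\,(nk-(n-1)m)_+\bigr)},
\]
where $\mathfrak n=(y,z)\subseteq\field[y,z]$. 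Hence $I^{(m)}\subseteq I^r$ iff $[I^{(m)}]_k\subseteq[I^r]_k$ for all $k$. One sees $m\ge r$ is necessary (already because $I^r\subseteq I^{(r)}$), that given $m\ge r$ the inclusion is automatic for $k\ge r$, and that for $0\le k<r$ it becomes, after cancelling the common factor $h^{r-k}$ (legitimate in the domain $\field[y,z]$) and using $\mathfrak n^a\subseteq\mathfrak n^b\iff a\ge b$, the single inequality $n(m-r)+(nk-(n-1)m)_+\ge k$. The left side is a ``V-shaped'' function of $k$ with vertex $k_0=(n-1)m/n$; minimizing over the integers $0\le k<r$ (the minimum is attained at $\lfloor k_0\rfloor$, which lies in range near the threshold since $k_0<r$ there) gives $I^{(m)}\subseteq I^r\iff m(n^2-n+1)\ge n^2r-n+1$, equivalently $I^{(m)}\not\subseteq I^r\iff m\le\tfrac{n^2r-n}{n^2-n+1}$.

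For case (b), take $L_1=\{x=0\}$, $L_2=\{y=0\}$, $p_0=[0:0:1]$; the $2n$ points other than $p_0$ form the complete intersection $Y=V(xy,F')$ for a degree-$n$ form $F'$ with $F'(p_0)\ne0$, whence $I=I(Z)=(xy,xF',yF')$, $I^r=\sum_{l=0}^r(F')^l(xy)^{r-l}(x,y)^l$, and $I^{(m)}=(xy,F')^m\cap(x,y)^m$. The key structural fact is that $x,y,F'$ is a regular sequence (since $(x,y,F')=(x,y,z^n)$ is $\mathfrak m$-primary and $R$ is Cohen--Macaulay), so $R$ is free over $\field[x,y,F']$ with basis $1,z,\dots,z^{n-1}$; thus $R$ has the basis $\{x^ay^b(F')^cz^s:a,b,c\ge0,\ 0\le s\le n-1\}$. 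In this basis one checks directly that $(x,y)^m$ is spanned by the basis elements with $a+b\ge m$ (using that their $(x,y)$-adic leading forms are, up to scalars, the distinct monomials $x^ay^bz^{nc+s}$), that $(xy,F')^m$ is spanned by those with $\min(a,b)+c\ge m$, hence $I^{(m)}$ by those with $a+b\ge m$ and $\min(a,b)+c\ge m$, while expanding the formula for $I^r$ shows $x^ay^b(F')^cz^s\in I^r$ iff $a+b\ge r$ and $(r-a)_++(r-b)_+\le c$. Therefore $I^{(m)}\subseteq I^r$ is equivalent (given the necessary condition $m\ge r$) to: for all $a\le b$ with $a+b\ge m$, $(r-a)_++(r-b)_+\le(m-a)_+$. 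Taking for each $a$ the extremal $b=\max(a,m-a)$ and splitting into $a\le m/2$ and $a>m/2$, the inequality fails exactly when some integer lies strictly between $2(m-r)$ and $2r-m$; as $2(m-r)$ is an integer this happens iff $4r-3m\ge2$. Hence $I^{(m)}\subseteq I^r\iff 4r\le3m+1$ (which indeed forces $m\ge r$).

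In both parts, $\rho(I)=\sup\{m/r:I^{(m)}\not\subseteq I^r\}$: for (a), $m\le\tfrac{n^2r-n}{n^2-n+1}$ gives $m/r<\tfrac{n^2}{n^2-n+1}$, approached as $r\to\infty$; for (b), $4r\ge3m+2$ forces $m/r<\tfrac43$, approached along $m=4k-2,\ r=3k-1$; so $\rho(I)=\tfrac{n^2}{n^2-n+1}$ and $\tfrac43$ respectively. I expect the real difficulty to be not the structural set-up but the final integer optimization: the ``continuous'' thresholds ($m(n^2-n+1)\ge n^2r$ in (a), $m\ge 4r/3$ in (b)) fall right out, but nailing the exact arithmetic statements --- the $-n$ in the numerator, the $+1$ --- requires tracking $\lfloor k_0\rfloor$ for $k_0=(n-1)m/n$ (and treating $m\equiv0$ versus $m\not\equiv0\pmod n$), respectively the matching of the two sub-intervals around $m/2$, together with checking in each case that the extremal index actually lies in the admissible range.
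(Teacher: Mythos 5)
Your approach is correct, and for part (a) it takes a genuinely cleaner route than the paper's. Both proofs construct a $\field$-basis of $R$ adapted to the complete-intersection-plus-a-point structure and characterize membership of basis elements in $I^{(m)}$ and $I^r$ via inequalities on the exponents, but you further exploit the grading $R=\bigoplus_{k\ge 0}x^k\field[y,z]$ by degree in the variable $x$ cutting out the line $L$. Because $I=(xy,xz,h)$, $(x,h)^m$, and $(y,z)^m$ are all $x$-homogeneous, the containment $I^{(m)}\subseteq I^r$ becomes a degree-by-degree condition inside $\field[y,z]$, and the slices $[I^r]_k=h^{(r-k)_+}(y,z)^{\min(k,r)}$ and $[I^{(m)}]_k=h^{(m-k)_+}(y,z)^{(nk-(n-1)m)_+}$ drop out at once (the competing summands $h^{r-d}(y,z)^d$ are nested since $h\in(y,z)$). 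This collapses the paper's three-case characterization of $I^r$-membership in Lemma \ref{OrdinaryPowerBasisLemma} (whose forward direction needs the auxiliary ``minimal factor'' bookkeeping) and the two-dimensional lattice-point analysis of Theorem \ref{ThmMikeMainResult} into a single one-parameter minimization of $n(m-r)+(nk-(n-1)m)_+ - k$ over $0\le k<r$. For part (b) your route is essentially the paper's (Propositions \ref{SymPowerBasisProp} and \ref{OrdinaryPowerBasisLemma2}): the same basis $x^ay^b(F')^cz^s$ with $s<n$, and your condition $a+b\ge r$ together with $(r-a)_++(r-b)_+\le c$ is equivalent to the paper's triple $\min(a,b)+d\ge r$, $a+b+d\ge 2r$, $a+b\ge r$ after a short check. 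You flag yourself that the closing integer optimizations are only sketched; carrying out the residue of $m$ modulo $n$ in (a) and the parity of $m$ in (b) does produce exactly the stated bounds (including the $-n$ and the $+1$), so there is no gap, only unfinished arithmetic. One thing the paper's more explicit description of a basis of $I^r$ buys, which your slice formulas do not directly provide, is the factorization statements reused in Section \ref{SectionConsandApps} to show $I^{(nt)}=(I^{(n)})^t$ and the $\sM$-adic containments.
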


The proof of part (a) of Theorem \ref{ThmEarlyMainResult} will come in Theorems \ref{ThmMikeResurgence} and \ref{ThmMikeMainResult}.
The proof of part (b) of Theorem \ref{ThmEarlyMainResult} will come in Theorem \ref{CompleteSolution2}.

Our next main result guarantees that the symbolic power algebra $\oplus I^{(m)}$ is Noetherian (see Remark \ref{RemarkNoetherian}).

\begin{thm}\label{EarlyThmSymPowerAlg}
Let $I$ be a homogeneous ideal defining points in $\P^2$.
\begin{enumerate}[(a)]
\item If $I$ defines $n+1$ almost collinear points, where $n\geq 3$, then $I^{(nt)} = (I^{(n)})^t$ for every $t\geq 1$; moreover, $n$ is the least integer for which equality holds for all $t$.

\item If $I$ defines a nearly-complete intersection of $2n+1$ points, $I^{(2st)} = (I^{(2s)})^t$ for all $s,t\geq 1$.
\end{enumerate}

\end{thm}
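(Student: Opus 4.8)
The plan is to reduce both parts to an elementary statement about splitting exponent vectors, using the explicit monomial‑type $\field$‑vector space bases for the symbolic powers $I^{(m)}$ constructed in the earlier sections. First I would record the easy inclusion, valid for every homogeneous ideal: a product of a form vanishing to order $a$ at each defining point with one vanishing to order $b$ vanishes to order $a+b$, so $I^{(a)}I^{(b)}\subseteq I^{(a+b)}$, and hence $(I^{(c)})^t\subseteq I^{(ct)}$ for all $t\ge 1$ by induction. Thus only the reverse inclusions remain.

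For part (a), choose coordinates so that the $n$ collinear points lie on $L=V(z)$, $p_0=[0:0:1]$, and $G\in\field[x,y]$ is the degree‑$n$ form cutting out those $n$ points on $L$; then $\{1,x,\dots,x^{n-1}\}$ is a basis of $\field[x,y,z]$ over $\field[G,y,z]$, and (as established earlier) $I^{(m)}=I(p_0)^m\cap I(Y)^m=(x,y)^m\cap(z,G)^m$ has $\field$‑basis the monomials $x^iG^jy^kz^l$ with $0\le i\le n-1$, $i+nj+k\ge m$, and $j+l\ge m$. Since $I^{(nt)}$ is the span of these monomials with threshold $nt$, it is enough to write every such exponent vector $(i,j,k,l)$ (threshold $nt$) as $\sum_{s=1}^t(i_s,j_s,k_s,l_s)$ with each summand satisfying $0\le i_s\le n-1$, $i_s+nj_s+k_s\ge n$, $j_s+l_s\ge n$; then $x^iG^jy^kz^l=\prod_s x^{i_s}G^{j_s}y^{k_s}z^{l_s}$ is a product of $t$ basis elements of $I^{(n)}$, hence lies in $(I^{(n)})^t$. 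I would build the splitting by first spreading the $j$ copies of $G$ as evenly as possible among the $t$ factors — when $j<t$, leaving exactly $t-j$ of them without a $G$ — using that a single $G$ already gives $i_s+nj_s+k_s\ge n$; then distributing the $z$‑exponents so each $j_s+l_s\ge n$; and finally distributing the $x$‑ and $y$‑exponents (all the $x$'s into one factor, where $i\le n-1$ keeps it legal) so each $i_s+nj_s+k_s\ge n$. The two global inequalities $j+l\ge nt$ and $i+nj+k\ge nt$ are precisely what is consumed (exactly) by these two steps, in both regimes $j\ge t$ and $j<t$.

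For part (b), take coordinates with the two lines $V(x),V(y)$, $p_0=[0:0:1]$, and $D$ the degree‑$n$ form cutting out the remaining $2n$ points; since $D(0,0,1)\ne 0$, $\{1,z,\dots,z^{n-1}\}$ is a basis of $\field[x,y,z]$ over $\field[x,y,D]$, and $I^{(m)}=(x,y)^m\cap(xy,D)^m$ has $\field$‑basis the monomials $x^ay^bD^cz^d$ with $0\le d\le n-1$, $a+b\ge m$, and $\min(a,b)+c\ge m$. Running the same mechanism — split $(a,b,c,d)$ of threshold $2t$ into $t$ pieces of threshold $2$, keeping $a_s\le b_s$ throughout and balancing the $c$‑exponents against the tight inequality $\min(a,b)+c\ge 2t$ — yields $I^{(2t)}=(I^{(2)})^t$ for all $t\ge 1$, and the full statement is then formal: with $t=s$ this gives $I^{(2s)}=(I^{(2)})^s$, so $I^{(2st)}=(I^{(2)})^{st}=\bigl((I^{(2)})^s\bigr)^t=(I^{(2s)})^t$. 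For the minimality clause in part (a): if $I^{(ct)}=(I^{(c)})^t$ held for all $t$ then $\alpha(I^{(c)})/c=\lim_t\alpha(I^{(ct)})/(ct)=\widehat\alpha(I):=\lim_m\alpha(I^{(m)})/m$; but the basis gives $\widehat\alpha(I)=(2n-1)/n$ and $\alpha(I^{(c)})=2c$ for $1\le c\le n-1$, so $\alpha(I^{(c)})/c=2\ne(2n-1)/n$, and hence for every $c<n$ and every $t$ with $n\mid ct$ the ideal $I^{(ct)}$ contains an element of strictly smaller initial degree than any element of $(I^{(c)})^t$, so $(I^{(c)})^t\subsetneq I^{(ct)}$.

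The conceptual input is entirely the vector space bases imported from the earlier sections; granting those, the only real work — and the step most prone to hidden cases — is the exponent‑splitting bookkeeping, where one must handle the cap $0\le i_s\le n-1$ (resp.\ $0\le d_s\le n-1$) and the two regimes in which the $G$‑powers (resp.\ the $a$‑exponents) are plentiful or scarce, while checking that the two governing inequalities are used exactly on the nose. A cleaner, case‑free phrasing would observe that for each $m$ the basis is indexed by the lattice points of a fixed rational polyhedral region whose defining bounds scale linearly in $m$, so the desired containment becomes an integer‑feasibility statement $S_{nt}\subseteq S_n+\cdots+S_n$ (resp.\ $S_{2t}\subseteq S_2+\cdots+S_2$); the explicit splitting above is merely a constructive proof of that feasibility.
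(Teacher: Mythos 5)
Your proposal is correct in outline and rests on the same conceptual foundation as the paper's argument: use the explicit monomial--type $\field$--bases for $I^{(m)}$ (your $x^iG^jy^kz^l$ and $x^ay^bD^cz^d$ are the paper's $H_iy^jz^l$ from Lemma~\ref{SymPowerBasisLem} and the $x^ay^bz^cF^d$ from Proposition~\ref{SymPowerBasisProp}, up to a change of indexing), and reduce the containment $I^{(nt)}\subseteq(I^{(n)})^t$ (resp.\ $I^{(2t)}\subseteq(I^{(2)})^t$) to splitting an exponent vector into $t$ admissible pieces. Tactically you diverge from the paper in two places. For part~(a) the paper inducts on $t$, peeling off a single factor of $I^{(n)}$ (its three cases are keyed to $\lfloor i/n\rfloor$ and $l$, i.e.\ to the \emph{remainder}), whereas you split into all $t$ factors at once by spreading the $F$-power evenly (your cases are keyed to $j\ge t$ vs.\ $j<t$). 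The direct split avoids the inductive bookkeeping and makes the use of the two defining inequalities visibly exact, at the cost of the pairing/allocation step being the real content. For part~(b) the paper proves the stronger Theorem~\ref{SymPowerSplitEven} ($I^{(\A+\beta)}=I^{(\A)}I^{(\beta)}$ whenever one of $\A,\beta$ is even), and Corollary~\ref{CorSymPowersArePowers2} is then immediate; you only establish $I^{(2t)}=(I^{(2)})^t$ and derive $I^{(2st)}=(I^{(2s)})^t$ formally. That is a leaner route to the stated theorem, though it gives up the finer factorization information the paper also wants (Theorem~\ref{SymPowerSplitOdd}, Corollary~\ref{SymPowerSplitComplete}). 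Your minimality argument via $\widehat\A(I)=(2n-1)/n$ versus $\A(I^{(c)})/c=2$ for $c<n$ is the Waldschmidt--constant packaging of exactly the computation the paper does in Lemma~\ref{LemAlphas} and the second half of Proposition~\ref{PropSymPowersArePowers}; it is essentially the same argument.

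One soft spot you should flesh out: in part~(b) the sentence ``keeping $a_s\leq b_s$ throughout and balancing the $c$-exponents'' hides a genuine step. Even spreading of $a$ into $a_s$ and of $b$ into $b_s$ independently can yield a factor with $a_s=0$ and $b_s=1$, which fails $a_s+b_s\geq 2$; one must first pair the smallest $a_s$ with the largest $b_s$ (possible because $a\le b$ forces $\lfloor a/t\rfloor\le\lfloor b/t\rfloor$ and $\lceil a/t\rceil\le\lceil b/t\rceil$), and only then allocate $c_s\ge 2-a_s$, using $a+c\ge 2t$ to see the total demand $\sum_s\max(0,2-a_s)=2t-a$ (when all $a_s\le 2$) is affordable. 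This is precisely the place where the paper's proof of Theorem~\ref{SymPowerSplitEven} breaks into cases (a),(b)(i)--(ii); your version should make the pairing explicit rather than asserting it.
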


The proof of part (a) of Theorem \ref{EarlyThmSymPowerAlg} is in Theorem \ref{PropSymPowersArePowers} and the proof of part (b) is Corollary \ref{CorSymPowersArePowers2}.
Also in Section \ref{SectionConsandApps}, we answer several questions of \cite{BCH,HarbourneHuneke} regarding containments of the form $I^{(m)}\subseteq \sM^i I^r$, where $\sM = (x,y,z)$ is the irrelevant maximal ideal.

\section{Main Results}\label{SecMainResults}

\subsection{Ideals of Almost Collinear Points}\label{SSecAlmostCP}

Let $\field$ be a field, and fix the ring $R = \field[\P^2] = \field[x,y,z]$.
The key to our proofs in both cases is to use use compatible $\field$-bases of $I^{(m)}$ and $I^r$, which we construct by first constructing a basis of $\field[x,y,z]$ and then restricting it to the ideals.

In particular, the following lemma is foundational to our approach.
Throughout, we use the notation $\langle S \rangle$ to denote the $\field$-span of the elements in the set $S$.

\begin{lem}\label{VSpaceBasisLemma}
Let $U$ and $V$ be subspaces of a vector space $W$.
Let $B_W$ be a basis of $W$ that contains a basis $B_U$ of $U$ and a basis $B_V$ of $V$.
Then $B_U\cap B_V$ is a basis for $U\cap V$.
\end{lem}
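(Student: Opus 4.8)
The plan is to show the two standard inclusions: that $B_U \cap B_V$ is a linearly independent set contained in $U \cap V$, and that it spans $U \cap V$. The first part is essentially free: $B_U \cap B_V \subseteq B_W$, and since $B_W$ is a basis of $W$ it is linearly independent, hence so is any subset of it; moreover every element of $B_U \cap B_V$ lies in $B_U \subseteq U$ and in $B_V \subseteq V$, so $B_U \cap B_V \subseteq U \cap V$. The content of the lemma is therefore entirely in the spanning claim, i.e. that every $w \in U \cap V$ can be written as a $\field$-linear combination of elements of $B_U \cap B_V$.

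For the spanning step I would take $w \in U \cap V$ and expand it in \emph{three} ways. Since $w \in U$, write $w = \sum_{b \in B_U} c_b\, b$ with $c_b \in \field$ and only finitely many nonzero. Since $w \in V$, likewise $w = \sum_{b \in B_V} d_b\, b$. But $B_U \subseteq B_W$ and $B_V \subseteq B_W$, so both of these are expansions of $w$ in the basis $B_W$ of the ambient space $W$. By uniqueness of coordinates with respect to the basis $B_W$, the two expansions must coincide: for each $b \in B_W$ the coefficient of $b$ is the same in both. In particular, if $c_b \neq 0$ for some $b \in B_U$, then $b$ appears with nonzero coefficient in the $B_W$-expansion of $w$, so it must also appear in the $B_V$-expansion, forcing $b \in B_V$; hence $b \in B_U \cap B_V$. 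Therefore $w = \sum_{b \in B_U \cap B_V} c_b\, b$, which shows $w \in \langle B_U \cap B_V \rangle$.

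Combining the two parts, $B_U \cap B_V$ is a linearly independent spanning set for $U \cap V$, i.e. a basis. The only point requiring any care — and the one I would state explicitly — is the uniqueness of coordinates with respect to $B_W$ and the observation that the $B_U$- and $B_V$-expansions of $w$ are both legitimate $B_W$-expansions because $B_U, B_V \subseteq B_W$; everything else is routine. (No finite-dimensionality is needed: all sums involved are finite by definition of a basis, so the argument works verbatim for arbitrary $W$.)
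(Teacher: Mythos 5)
Your proof is correct and uses essentially the same idea as the paper: both arguments hinge on uniqueness of coordinates with respect to $B_W$ to deduce that any basis vector appearing with nonzero coefficient in an expansion of $w\in U\cap V$ must lie in both $B_U$ and $B_V$. The only cosmetic difference is that the paper expands $w$ directly in $B_W$ and then invokes $w\in\langle B_U\rangle$ and $w\in\langle B_V\rangle$, whereas you expand in $B_U$ and $B_V$ separately and then identify the two as $B_W$-expansions; the paper also omits the (immediate) linear-independence and containment observations that you spell out.
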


\begin{proof}
It is enough to show that $B_U \cap B_V$ spans $U\cap V$.
Suppose $a\in U\cap V$.
We know $a = \sum\limits_{e\in B_W} c_e e$ for $c_e\in \field$ (where $c_e = 0$ for all but finitely many $e$).
Since $a\in \langle B_U\rangle$, $c_e \not=0$ means $e\in B_U$.
Similarly, as $a\in \langle B_V\rangle$, $c_e \not=0$ implies $e\in B_V$.
Therefore, if $c_e\not=0$ we can conclude $e\in B_U\cap B_V$, and thus $a = \sum\limits_{e\in B_W} c_e e = \sum\limits_{e\in B_U\cap B_V} c_e e \in \langle B_U\cap B_V\rangle$.
\end{proof}

We first consider the case of almost collinear points; recall the definition of this configuration in Definition \ref{DefnAC}.
We use the following notation.

Let $Z$ be an almost collinear subscheme of $n+1$ points, and let $I = I(Z)$ be the ideal of forms vanishing at $Z$.
Assume that the collinear points $p_1, p_2,\ldots,p_n$ satisfy $z = 0$; specifically, let $p_1$ be defined by the intersection of the lines $x=0$ and $z=0$, and let $p_i$ be defined by lines $z=0$ and  $x-l_i y$, where $2\leq i\leq n$ and $l_i\not=0$.
Additionally, we may as well assume that $I(p_0)=(x,y)$.
Then this situation is described in Figure \ref{FigMikeCase}, and $I=(xz,yz,F)=(x,y)\cap (z,F)$, where $F = L_1 \cdots L_n$ is a homogeneous polynomial in $x$ and $y$ of degree $n$ (uniquely determined up to scalar multiple by the points $p_1,\ldots,p_n$), $L_1 = x$, and $L_i = x - l_i y$, where $p_i = (l_i, 1,0)$ for $2\leq i \leq n$.
Thus, we may assume that one term of $F$ is $x^n$.
With this setup, $I^{(m)} = (x,y)^{(m)} \cap (z,F)^{(m)} = (x,y)^m \cap (z,F)^m$, where the last equality follows from the fact that the ideals $(z,F)$ and $(x,y)$ define complete intersections.

\begin{figure}
\centering
\begin{tikzpicture}[scale=5/6]
\filldraw [black] (-1/2,0) circle (2pt) node[below] {$p_1$}
(1/4,0) circle (2pt) node[below] {$p_2$}
(1,0) circle (2pt) node[below] {$p_3$}
(7/4,0) circle (0pt) node[below] {$\cdots$}
(1,2) circle (2pt) node[below] {$p_0$}
(5/2,0) circle (2pt) node[below] {$p_n$};
\draw (-1,0) -- (3,0) node {\,\,\,\,\qquad $z=0$};
\draw (-2/3,-2/9) -- (4/3,22/9) node {\,\,\,\,\qquad $x=0$};
\draw (3/5,22/9) node {\!\!\!\!\!\!\!\!\!\!\!\!\!\!\!\!\!\! $y=0$} -- (3,-2/9);
\end{tikzpicture}
\caption{$n+1$ almost collinear points}\label{FigMikeCase}
\end{figure}

\begin{notation}
Given $F \in \field[x,y]$ of degree $n$ as above and $i$ a nonnegative integer, use the division algorithm to write $i=an+e$, where $0\leq e < n$.
For each $i$, we write $H_i : = x^e F^a$.
\end{notation}

Note that $\deg H_i = i$, and, as a polynomial in $x$, $H_i$ is monic with leading term $x^i$.
Moreover, $H_i \in (x,y)^i = (x,y)^{(i)}$.

\begin{lem}\label{LemXSpan}
Let $i \geq 0$.
Then $x^i$ is in the $\field$-span of $H_0 y^i, H_1 y^{i-1}, \dots, H_{i-1} y, H_i$.
\end{lem}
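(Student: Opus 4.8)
The plan is to observe that the $i+1$ polynomials $H_0 y^i, H_1 y^{i-1}, \dots, H_{i-1}y, H_i$ all lie in the vector space $\field[x,y]_i$ of homogeneous forms of degree $i$ in the two variables $x$ and $y$, which has dimension exactly $i+1$. Thus it suffices to prove that these $i+1$ forms are $\field$-linearly independent, for then they form a basis of $\field[x,y]_i$ and, in particular, $x^i$ lies in their $\field$-span.

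To see the linear independence, I would first record that each $H_j = x^{e}F^{a}$ (with $j = an+e$, $0\le e<n$) is, as a polynomial in $x$, monic of degree exactly $j$ with leading term $x^{j}$: this is because $F$ has $x^n$ as a term and is monic in $x$ (we arranged $L_1=x$, $L_i=x-l_iy$), so $F^a$ is monic of $x$-degree $an$, and multiplying by $x^e$ gives $x$-degree $an+e=j$. Consequently, expanding in the monomial basis $\{y^i, xy^{i-1},\dots,x^i\}$ of $\field[x,y]_i$, we get
\[
H_j y^{i-j} = x^{j}y^{i-j} + \sum_{k<j} c_{j,k}\, x^{k}y^{i-k}
\]
for suitable $c_{j,k}\in\field$. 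Hence the matrix expressing $H_0 y^i, H_1 y^{i-1},\dots,H_i$ in terms of $y^i, xy^{i-1},\dots,x^i$ is triangular with $1$'s on the diagonal, so it is invertible, which gives the linear independence (and hence the claim).

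The argument has essentially no hard step; the only point requiring care is the bookkeeping that $H_j$ is genuinely monic of $x$-degree $j$, so that the transition matrix is unitriangular rather than merely triangular — this relies on the normalization of $F$ fixed just before the statement. An alternative that avoids the dimension count is a downward induction on the $x$-degree: from the display, $x^i = H_i - \sum_{k<i} c_{i,k} x^k y^{i-k}$, and by induction each $x^k$ with $k<i$ is an $\field$-combination of $H_0 y^k,\dots,H_k$, so each term $x^k y^{i-k}$ is an $\field$-combination of $H_0 y^i,\dots,H_k y^{i-k}$, all of which already occur in our list; collecting terms completes the induction. Either route is short, so I would present whichever reads more cleanly in context (likely the basis argument).
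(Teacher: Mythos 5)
Your proposal is correct, and your second (``alternative'') argument---the downward induction on $x$-degree using the display $x^i = H_i - \sum_{k<i} c_{i,k}\,x^k y^{i-k}$---is essentially identical to the paper's proof. Your primary argument, however, takes a genuinely different route: you prove \emph{linear independence} of $H_0 y^i,\dots,H_i$ via the unitriangular transition matrix, then invoke the dimension count $\dim_\field \field[x,y]_i = i+1$ to conclude they form a basis and hence span. Both approaches hinge on the same key observation (that $H_j$ is monic in $x$ of $x$-degree exactly $j$, which you rightly flag as depending on the normalization of $F$). The paper's induction keeps Lemma~\ref{LemXSpan} as a pure spanning statement and postpones the dimension count to Lemma~\ref{RingBasis}, where it is run in the reverse direction (spanning plus cardinality yields independence); your basis argument proves the slightly stronger fact up front, at the cost of front-loading the dimension bookkeeping that the paper prefers to isolate in the next lemma. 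Either is fine; the paper's choice has the minor organizational advantage that the three-variable basis Lemma~\ref{RingBasis} can reuse a single dimension-count template rather than duplicating it.
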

\begin{proof}
This is true for $i<n$, since $x^i = H_i$.
Suppose $i \geq n$, so $x^i\not= H_i$.
Then $H_i$ is a linear combinations of monomials of the form $x^t y^{i-t}$, where one term of $H_i$ is $x^i$.
Thus, $x^i - H_i$ is also linear combination of monomials of the form $x^t y^{i-t}$, where $t < i$ (as we have subtracted the $x^i$ term off).
By induction on $i$, each monomial $a_t x^t y^{i-t}$ appearing in the expansion of $x^i - H_i$ satisfies $a_t x^t y^{i-t} \in \langle H_0 y^i,\dots, H_{i-1} y \rangle$, and thus $x^i - H_i \in \langle H_0 y^i,\dots, H_{i-1} y\rangle$.
We conclude $x^i \in \langle H_0 y^i,\dots, H_i y^0\rangle$.
\end{proof}

\begin{lem}\label{RingBasis}
Consider $R = \field[\P^2] = \field[x,y,z]$.
A $\field$-basis of $R$ is given by $\B_R = \bigcup\limits_{i\geq 0} B_i$, where $B_i =  \setof{H_i y^j z^l}{i=an+e,\,0\leq e < n,\, H_i=x^e F^a, \text{ and } j,l\geq 0}$.
\end{lem}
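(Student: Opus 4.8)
The plan is to check the two defining properties of a basis---spanning and linear independence---one graded component at a time. Each $H_i y^j z^l$ is homogeneous of degree $i+j+l$, so $\B_R$ decomposes as the disjoint union $\bigsqcup_{d\geq 0}(\B_R\cap R_d)$ with $\B_R\cap R_d = \setof{H_i y^j z^l}{i,j,l\geq 0,\ i+j+l=d}$, and it is enough to prove that each $\B_R\cap R_d$ is a $\field$-basis of the $d$th graded piece $R_d$.

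Spanning is essentially immediate from Lemma \ref{LemXSpan}. That lemma gives $x^i=\sum_{k=0}^{i}c_k H_k y^{i-k}$ for some $c_k\in\field$; multiplying through by $y^j z^l$ shows that every monomial $x^i y^j z^l$ lies in $\langle \B_R\cap R_{i+j+l}\rangle$. Since the monomials of degree $d$ span $R_d$, so does $\B_R\cap R_d$.

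For linear independence I would argue by a dimension count. First note that the elements of $\B_R\cap R_d$ are pairwise distinct: since one term of $F$ is $x^n$, the polynomial $H_i=x^e F^a$ (with $i=an+e$, $0\le e<n$) has $x^i$ as its term of top degree in $x$, so $H_i y^j z^l$ has $x^i y^j z^l$ as its unique monomial of largest $x$-degree; reading off this monomial recovers the triple $(i,j,l)$. Hence $\#(\B_R\cap R_d)$ equals the number of triples of nonnegative integers summing to $d$, namely $\binom{d+2}{2}=\dim_{\field} R_d$. A spanning subset of a finite-dimensional vector space whose size equals the dimension is automatically a basis, so $\B_R\cap R_d$ is a basis of $R_d$; taking the union over all $d$ yields the result.

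The only real bookkeeping is confirming that the count $\#(\B_R\cap R_d)=\binom{d+2}{2}$ is exact, which is precisely the distinctness statement above. If one prefers to avoid the count, linear independence can instead be obtained directly by a triangularity argument: order the monomials of $R_d$ by descending exponent of $x$ (breaking ties, say, by descending exponent of $z$), so that $H_i y^j z^l$ has leading monomial $x^i y^j z^l$ and every other monomial occurring in it is strictly smaller; since the leading-monomial assignment $(i,j,l)\mapsto x^i y^j z^l$ is injective, no nontrivial $\field$-linear combination of the $H_i y^j z^l$ can vanish.
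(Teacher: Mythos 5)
Your proof is correct and follows essentially the same route as the paper: spanning in each degree via Lemma~\ref{LemXSpan}, then linear independence by comparing the count of degree-$d$ elements to $\dim_\field R_d = \binom{d+2}{2}$. The only addition is that you make explicit the distinctness of the $H_i y^j z^l$ (via the top $x$-degree term), which the paper leaves implicit, and you note the alternative triangularity argument; both are fine but do not change the substance.
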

\begin{proof}
By Lemma \ref{LemXSpan}, for each $t\geq 0$, $x^t$ is in the span of $H_0 y^t, \dots, H_t y^0$, hence every monomial $x^t y^s z^l$ is in the span of elements of the form $H_i y^j z^l$ with $i+j = t+s$.
Since the monomials of the form $x^t y^s z^l$ span $\field[x,y,z]$, so do the elements of the form $H_i y^j z^l$.

The elements $H_i y^j z^l$ are homogeneous and thus the span of those elements of degree $d$ must be the homogeneous component $R_d$ of $R = \field[x,y,z]$.
There are exactly ${d+2\choose 2} = \dim_{\field} R_d$ elements of the form $H_i y^j z^l$ of degree $d$ (since the cardinality of the set of those elements of the form $H_i y^j z^l$ is just the number of solutions $(i,j,l)$ to $i+j+l=d$ with $i,j,l\geq 0$).
Thus, the elements $H_i y^j z^l$ of degree $d$ are independent.
By homogeneity, any linear dependence among the elements of the form $H_i y^j z^l$ must involve elements of the same degree, hence $\B_R$ is linearly independent, and a $\field$-vector space basis of $R$.
\end{proof}

The next lemma places restrictions on $i,j,l$ which make elements of the form $H_i y^j z^l$ (with the restrictions) into a $\field$-basis of the symbolic power $I^{(m)}$.

\begin{lem}\label{SymPowerBasisLem}
Let $m \geq 1$.
\begin{enumerate}
\item[(a)] Then $H_i y^j z^l \in I^{(m)}$ if and only if $i,j,l\geq 0$, $i+ln\geq mn$, and $i+j\geq m$.
\item[(b)] Moreover, $I^{(m)}$ is the $\field$-vector space span of the elements of the form $H_i y^j z^l$ contained in $I^{(m)}$.
\end{enumerate}
\end{lem}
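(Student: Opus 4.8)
The plan is to use the decomposition $I^{(m)}=(x,y)^m\cap(z,F)^m$ recorded above, together with the basis $\B_R$ from Lemma \ref{RingBasis} and the gluing principle of Lemma \ref{VSpaceBasisLemma}. Since $\B_R$ is a basis of $R$, Lemma \ref{VSpaceBasisLemma} reduces everything to exhibiting, \emph{inside} $\B_R$, a basis of $(x,y)^m$ and a basis of $(z,F)^m$. So the two things I would prove are: (i) $H_iy^jz^l\in (x,y)^m$ if and only if $i+j\ge m$, and the set of such elements is a basis of $(x,y)^m$; and (ii) $H_iy^jz^l\in (z,F)^m$ if and only if $i+ln\ge mn$, and the set of such elements is a basis of $(z,F)^m$. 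Note at the outset that, writing $i=an+e$ with $0\le e<n$, one has $i+ln=(a+l)n+e$, so $i+ln\ge mn$ is equivalent to $a+l\ge m$; I will use the latter form.

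For (i), the point is that $H_i=x^eF^a$ is homogeneous of degree $i$ in the variables $x,y$ (and lies in $(x,y)^i$), so $H_iy^jz^l=(H_iy^j)z^l$ has its $z^l$-coefficient a nonzero form of degree $i+j$ in $\field[x,y]$ and all other $z$-coefficients zero; since $(x,y)^m=\bigoplus_l (x,y)^m_{\field[x,y]}z^l$, and a nonzero form of degree $d$ lies in $(x,y)^m_{\field[x,y]}$ exactly when $d\ge m$, membership is equivalent to $i+j\ge m$. That these elements span $(x,y)^m$ follows from Lemma \ref{LemXSpan}: $(x,y)^m$ is spanned by the monomials $x^sy^tz^l$ with $s+t\ge m$, and such a monomial lies in $\langle H_0y^{s+t}z^l,\dots,H_sy^tz^l\rangle$, each listed element having ``$i+j$'' equal to $s+t\ge m$; linear independence is automatic as a subset of $\B_R$. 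For (ii), the easy direction is that if $a+l\ge m$ then $x^eF^ay^jz^l$ is divisible by $F^m$ when $a\ge m$, and by the generator $z^{m-a}F^a$ of $(z,F)^m$ when $a<m$. The spanning statement again uses Lemma \ref{LemXSpan} together with the identity $F^kH_u=x^{e_u}F^{a_u+k}=H_{u+kn}$ (where $u=a_un+e_u$): a generator $z^{m-k}F^k\cdot x^sy^tz^p$ of $(z,F)^m$ rewrites as a combination of $H_{u+kn}y^{\,s-u+t}z^{\,m-k+p}$ with $0\le u\le s$, and each of these has ``$a$''$=k+\lfloor u/n\rfloor$ and ``$l$''$=m-k+p$, hence ``$a+l$''$\ge m$. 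Finally the ``only if'' direction of membership in $(z,F)^m$: expanding a hypothetical $x^eF^ay^jz^l=\sum_{k=0}^m z^{m-k}F^kg_k$ and collecting the coefficient of $z^l$ over $\field[x,y]$ forces $F^{\max(0,\,m-l)}$ to divide $x^eF^ay^j$ in $\field[x,y]$; since $n\ge2$, the factorization $F=x(x-l_2y)\cdots(x-l_ny)$ shows $x^eF^ay^j=x^{e+a}y^j\prod_{i\ge 2}(x-l_iy)^a$ is divisible by exactly the $a$-th power of $F$ (the factor $x-l_2y$ divides neither $x$ nor $y$), so $\max(0,m-l)\le a$, i.e. $a+l\ge m$.

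With (i) and (ii) in hand the assembly is immediate: by Lemma \ref{VSpaceBasisLemma} applied with $W=R$, $U=(x,y)^m$, $V=(z,F)^m$ and $B_W=\B_R$, the intersection of the two bases—namely the elements $H_iy^jz^l$ with $i+j\ge m$ \emph{and} $i+ln\ge mn$—is a basis of $(x,y)^m\cap(z,F)^m=I^{(m)}$, which is (b); and since a member of $\B_R$ lying in $(x,y)^m$ (resp. $(z,F)^m$) must, by uniqueness of expansion in $\B_R$, be one of the listed elements, the combined membership criterion is exactly (a). I expect the only real obstacle to be the ``only if'' half of step (ii): verifying that no further $H_iy^jz^l$ sneaks into $(z,F)^m$ is where one actually needs the hypothesis $n\ge2$ and unique factorization in $\field[x,y]$; the rest is bookkeeping with $H_i$, Lemma \ref{LemXSpan}, and the identity $F^kH_u=H_{u+kn}$.
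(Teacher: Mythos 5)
Your proof is correct and follows the same overall blueprint as the paper: the decomposition $I^{(m)} = (x,y)^m \cap (z,F)^m$, the $\field$-basis $\B_R$ of Lemma~\ref{RingBasis}, spanning arguments for $(x,y)^m$ and $(z,F)^m$ via Lemma~\ref{LemXSpan}, and assembly via Lemma~\ref{VSpaceBasisLemma}. The one genuine point of divergence is the ``only if'' direction of membership in $(z,F)^m$: the paper argues geometrically, via the order of vanishing of $H_i y^j z^l$ at the collinear points, asserting along the way that none of $p_1,\ldots,p_n$ lie on $x=0$ or $y=0$ --- a claim that is actually false for $p_1$ in the coordinates fixed just before the lemma, though the conclusion survives because the binding constraint already comes from $p_2,\ldots,p_n$. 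You instead expand along powers of $z$ and invoke unique factorization in $\field[x,y]$, using that $x - l_2 y$ (available because $n\geq 2$) is a prime coprime to both $x$ and $y$ and hence divides $x^e F^a y^j$ with multiplicity exactly $a$. Your algebraic argument is self-contained, makes the role of $n\geq 2$ explicit, and sidesteps the paper's small coordinate slip; the remaining spanning computations (including the identity $F^k H_u = H_{u+kn}$) are equivalent in substance to the paper's.
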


\begin{proof}
(a) Suppose $i,j,l\geq 0$, $i+ln\geq mn$, and $i+j\geq m$.
Then, since $i,j,l\geq 0$ and $i+j\geq m$, we have $H_i y^j z^l \in (x,y)^m$.
Since $i+ln\geq mn$, we have $i/n + l \geq m$, which is equivalent to $\lfloor i/n \rfloor + l \geq m$, which further implies $H_i y^j z^l \in (z,F)^m$.
Thus, $H_i y^j z^l \in (x,y)^m \cap (z,F)^m = I^{(m)}$.

Conversely, suppose $H_i y^j z^l \in I^{(m)}$.
Since $I^{(m)} = (x,y)^m \cap (z,F)^m$, we know $H_i y^j z^l \in (x,y)^m$, and thus $i+j \geq m$.
Also, $H_i y^j z^l \in (z,F)^m = (z,F)^{(m)}$, the order of vanishing of $H_i y^j z^l$ at $p_0$ must be at least $m$.
Since none of the points $p_1,\cdots,p_n$ are on the lines $x=0$ or $y=0$, $H_i y^j z^l \in (z,F)^{(m)}$ if and only if $F^b z^l \in (z,F)^{(m)}$, where $H_i = x^a F^b$.
But $F^b z^l \in (z,F)^{(m)}$ if and only if $b+l\geq m$, which holds if and only if $i+ln\geq mn$.

(b) Suppose we show that $(x,y)^m$ is the $\field$-vector space span of the elements of the form $H_i y^j z^l$ contained in $(x,y)^m$, and that $(z,F)^m$ is the $\field$-vector space span of the elements of the form $H_i y^j z^l$ contained in $(z,F)^m$.
Then, by Lemmas \ref{VSpaceBasisLemma} and \ref{RingBasis}, $I^{(m)} = (x,y)^m \cap (z,F)^m$ also is the $\field$-vector space span of the elements of the form $H_i y^j z^l$ contained in $I^{(m)}$.
Now, $(x,y)^m$ is the $\field$-span of monomials of the form $x^i y^j z^l$ with $i+j\geq m$, each of which is by Lemma \ref{LemXSpan} in the $\field$-span of elements of the form $H_i y^j z^l$ with $i+j\geq m$, each of which has order of vanishing at $p_0$ at least $m$ and hence is in $(x,y)^m$.
Finally, $(z,F)^m$ is the $\field$-span of elements of the form $x^t F^b y^s z^l$ with $b+l \geq m$.
But $x^t y^s$ is in $(x,y)^{t+s}$, and hence $x^t y^s$ is by Lemma \ref{LemXSpan} in the $\field$-span of elements of the form $H_q y^j$ with $q+j = t+s$, so each element $x^t F^b y^s z^l$ with $b+l\geq m$ is in the $\field$-span of elements of the form $H_i y^j z^l$ with $i=q+bn$, $q+j=t+s$ and $b+l\geq m$.
But $F^b z^l$ divides each $H_i y^j z^l$, and $F^b z^l \in (z,F)^m$ implies $H_i y^j z^l \in (z,F)^m$.
\end{proof}

We next provide a similar description of the elements of $I^r$, which will eventually allow us to completely answer the question of which lattice points $(m,r)$ correspond to containments $I^{(m)}\subseteq I^r$.

\begin{lem}\label{OrdinaryPowerBasisLemma}
Let $r\geq 1$.
\begin{enumerate}
\item[(a)] The ideal $I^r$ is the span of the elements of the form $H_i y^j z^l \in I^r$; in addition, if $H_i y^j z^l \in I^r$, then $H_i y^j z^l$ is a product of $r$ elements of $I$.
\item[(b)] Moreover, $H_i y^j z^l \in I^r$ if and only if $i,j,l\geq 0$ and either:
\begin{enumerate}
\item[(1)] $l < j$ and $i+nl \geq rn$, or
\item[(2)] $j\leq l < i+j$ and $i+j+(n-1)l \geq rn$, or
\item[(3)] $i+j\leq l$ and $r\leq i+j$.
\end{enumerate}
\end{enumerate}
\end{lem}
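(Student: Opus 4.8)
The plan is to mimic the structure of the proof of Lemma \ref{SymPowerBasisLem}, using the fact that $I = (x,y)\cap(z,F)$ together with the multiplicativity of products. First I would establish part (b), the numerical characterization, and then deduce part (a) from it. For part (b), the key observation is that, since $I$ is generated by $xz$, $yz$, and $F$, an arbitrary product of $r$ generators has the shape $(xz)^{a}(yz)^{b}F^{c}$ with $a+b+c=r$, and this equals $x^{a}y^{b}z^{a+b}F^{c}$. Collecting the $x$-part into the $H$-notation (writing $x^{a}F^{c}$ as a polynomial whose leading term in $x$ is $x^{a+cn}$, and then expanding lower-order terms via Lemma \ref{LemXSpan}) shows that $I^{r}$ is spanned by elements $H_{i}y^{j}z^{l}$ where $i = a + cn$ (up to the lower-order contributions), $j \ge b$, $l = a+b$, and $a+b+c = r$. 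Eliminating $a,b,c$ from these relations produces exactly the three numerical regimes in (b): the trichotomy according to whether $l<j$, $j\le l<i+j$, or $i+j\le l$ corresponds to how the "budget" $r$ is split between the three generator types, i.e. how much of $z^{l}$ is forced to come from $(xz)^a(yz)^b$ versus how much $y$-power must be supplied independently.

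More concretely, for the forward direction of (b) I would argue that if $H_{i}y^{j}z^{l}\in I^{r}$ then it lies in $(x,y)^{r}$ (giving $i+j\ge r$, which is subsumed in the three cases) and in $(z,F)^{r}$; the latter, as in Lemma \ref{SymPowerBasisLem}, forces $\lfloor i/n\rfloor + l \ge r$, i.e. a condition comparing $i+nl$ to $rn$. But these two conditions alone are not sufficient — the content of the lemma is that the extra $z$'s in $I^{r}$ (as opposed to $I^{(r)}$) must be "paid for" by pairing with $x$'s or $y$'s through the generators $xz$ and $yz$. So the refined bookkeeping is: a $z^{l}$ factor in a product of $r$ generators can only arise from using $xz$ and $yz$ a total of $l$ times (when $l\le r$), which consumes $l$ of the $r$ slots and leaves $r-l$ slots for $F$; this is what yields case (3) ($r\le i+j$ when $i+j\le l$). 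When $l$ is smaller, we have freedom in how we allocate, and optimizing the allocation to make $i$ as small as possible gives the boundary inequalities in cases (1) and (2). For the reverse direction, given $(i,j,l)$ satisfying one of (1), (2), (3), I would exhibit an explicit product $(xz)^{a}(yz)^{b}F^{c}$ times a monomial in $x,y$ (re-expanded via Lemma \ref{LemXSpan}) that equals $H_{i}y^{j}z^{l}$, choosing $a,b,c$ according to which case holds.

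Part (a) then follows: $I^{r}$ is a homogeneous ideal spanned by all products of $r$ generators times arbitrary monomials; each such product, after using Lemma \ref{LemXSpan} to rewrite powers of $x$, becomes a $\field$-linear combination of elements $H_{i}y^{j}z^{l}$, each of which is itself a product of $r$ elements of $I$ (since the rewriting via Lemma \ref{LemXSpan} only introduces terms of the same total degree that still lie in the appropriate powers of $(x,y)$ and $(z,F)$, and hence in $I^{r}$ by the complete-intersection argument used for symbolic powers). Combined with Lemma \ref{RingBasis}, which says the $H_{i}y^{j}z^{l}$ form a basis of $R$, we conclude $I^{r}$ is exactly the span of those basis elements it contains, and the "product of $r$ elements of $I$" claim is recorded along the way.

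The main obstacle I anticipate is getting the trichotomy in (b) exactly right — in particular, verifying that the inequalities in cases (1), (2), (3) are simultaneously necessary and sufficient, i.e. that the optimal allocation of the $r$ generator-slots among $xz$, $yz$, $F$ (to minimize the required $i$ for given $j,l$) really does reduce to precisely these three linear regimes with the stated breakpoints $l<j$, $l<i+j$. This is a small linear-programming/casework argument, and the delicate point is the interplay between the constraint $j\ge b$ (the $y$-power must at least cover what is not supplied by $(yz)^{b}$) and the constraint $l = a+b$ (the $z$-power is pinned to the number of $xz$ and $yz$ factors used), with any excess $z$ beyond $a+b$ being impossible — so in fact $l\le a+b \le r$ always, and one must handle the interaction with how lower-order terms from expanding $H_i$ can shift $j$ upward. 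I would treat each of the three cases separately, in each case writing down the witnessing product explicitly, and check the boundary values carefully.
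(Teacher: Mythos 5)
There is a genuine gap in your argument for part (a), and the forward direction of (b) is deferred to an unexecuted casework argument that is actually the heart of the lemma.

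For (a): you claim that after rewriting a product of $r$ generators (times a monomial) via Lemma~\ref{LemXSpan}, each $H_i y^j z^l$ that appears ``is itself a product of $r$ elements of $I$,'' and you justify this by saying the terms ``still lie in the appropriate powers of $(x,y)$ and $(z,F)$, and hence in $I^r$ by the complete-intersection argument used for symbolic powers.'' This is precisely the fallacy the lemma is designed to dispel. The complete-intersection argument gives $(x,y)^r\cap(z,F)^r = I^{(r)}$, \emph{not} $I^r$; for this configuration $I^{(r)}\supsetneq I^r$ whenever $r\geq 2$ (that is the content of Theorem~\ref{ThmMikeResurgence}). So membership in both $(x,y)^r$ and $(z,F)^r$ does not yield membership in $I^r$. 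What the paper actually does, and what you would also need, is an explicit refactoring: starting from a product $H_{i_1}y^{j_1}z^{l_1}\cdots H_{i_r}y^{j_r}z^{l_r}$ of $r$ elements of $I$, the combined $x$-power $x^A$ (with $A=\sum a_t$) is re-expanded via Lemma~\ref{LemXSpan} into terms $H_u y^v$ with $u+v=A$, and then $H_u y^v$ is \emph{redistributed} as a product $G_1\cdots G_r$ of forms with $\deg G_s=a_s$; one then checks directly that each $G_t F^{b_t} y^{j_t} z^{l_t}$ lies in $I$. That redistribution step (not a symbolic-power identity) is what makes each expansion term a product of $r$ elements of $I$. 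Without it, ``product of $r$ elements of $I$'' in (a) is unproved, and since (b)'s forward direction relies on (a) (a basis element of $I^r$ must factor into $r$ pieces of $I$), the whole chain is incomplete.

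For (b), the backward direction by exhibiting explicit factorizations in the three regimes matches the paper's approach and should work out. But the forward direction is the delicate part, and you explicitly defer it (``the main obstacle I anticipate\ldots''). The paper's mechanism is to introduce the notion of minimal factors of $H_i y^j z^l$ in $I$ and count how many $z$-factors $P_s$ and how many copies of $F$ can divide a given $H_i y^j z^l$ subject to $0\leq t\leq \lfloor i/n\rfloor$ and $0\leq s\leq\min\{l,\,i+j-nt\}$, then ask whether $s+t\geq r$ is attainable. The breakpoints $l<j$, $j\leq l<i+j$, $i+j\leq l$ fall out of which term achieves the minimum in $\min\{l,\,i+j-nt\}$. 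Your sketch recognizes the linear-programming flavor but does not set up the feasible region, and the interaction you flag at the end (``any excess $z$ beyond $a+b$ being impossible'') is misstated: extra $z$'s \emph{are} allowed (they come from the ambient monomial factor), the constraint is rather that only $\min\{l,\,i+j-nt\}$ of the $z$'s can be paired with linear forms or $y$'s to form elements of $I$. Fixing (a) and carrying out the minimal-factor bookkeeping for (b) would close both gaps.
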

\begin{proof}
\textbf{Part (a):} This is true for $r=1$ by Lemma \ref{SymPowerBasisLem}(b).
Thus $I^r$ is the span of products $H_{i_1} y^{j_1} z^{l_1} \cdots H_{i_r} y^{j_r} z^{l_r}$ of $r$ elements of the form $H_{i_t} y^{j_t} z^{l_t}$, which satisfy $i_t, j_t, l_t \geq 0$, $i_t + l_t n \geq n$ and $i_t + j_t \geq 1$ for $t=1,\dots,r$ (i.e., elements of the form $H_{i_t} y^{j_t} z^{l_t} \in I$ for each $t$).

Write each $H_{i_t}$ as $x^{a_t} F^{b_t}$ where $i_t = b_t n + a_t$ and $0\leq a_t < n$.
Let $B = b_1 + \cdots + b_r$ and let $A = a_1 + \cdots + a_r$.
Then $H_{i_1} \cdots H_{i_r} = x^A F^B$ is, by Lemma \ref{LemXSpan}, in the span of elements of the form $H_u y^v F^B = H_{u+Bn} y^v$ where $u+v = A$ and $0\leq u\leq A$.

Since $i_1 + \cdots + i_r = (a_1 + \cdots + a_r) + n(b_1 + \cdots + b_r) = A + nB$, and since $H_u y^v$ is a product of $u+v = A = a_1 + \cdots + a_r$ linear forms, each of which is in $(x,y)$, we can factor $H_u y^v$ as $G_1 \cdots G_r$ where each $G_s$ is a product of $a_s$ of these linear forms.
Thus $H_{u+Bn} y^{v+j_1+\cdots+j_r} z^{l_1+\cdots+l_r} = (G_1 F^{b_1} y^{j_1} z^{l_1}) \cdots (G_r F^{b_r} y^{j_r} z^{l_r})$.
Now each $H_{i_t} y^{j_t} z^{l_t}$ satisfies $i_t,j_t,l_t \geq 0$, $i_t + l_t n \geq n$ and $i_t + j_t \geq 1$.
Thus $G_t  F^{b_t} y^{j_t} z^{l_t}$ satisfies $(a_t+b_t n) + l_t n = i_t + l_t n \geq n$ (thus either $b_t >0$ or $l_t > 0$ and so $G_t F^{b_t} y^{j_t} z^{l_t}$ vanishes at each point $p_1,\dots,p_n$) and $(a_t+b_t n) + j_t = i_t + j_t \geq 1$ (so $G_t F^{b_t} y^{j_t} z^{l_t}$ vanishes at $p_0$) and hence $G_t F^{b_t} y^{j_t} z^{l_t} \in I$.
Thus $H_{u+Bn} y^{v+j_1+\cdots+j_r} z^{l_1+\cdots+l_r} \in I^r$.

This shows not only that $I^r$ is the span of the elements of the form $H_i y^j z^l \in I^r$, but also that every element of $I^r$ is in the span of elements $H_i y^j z^l \in I^r$ which factor as a product of $r$ elements of $I$.
But if $H_i y^j z^l \in I^r$, it is in the span only of itself (since elements of this form are linearly independent), so each element $H_i y^j z^l \in I^r$ is itself a product of $r$ elements of $I$.

\textbf{Part (b):} Begin with the backward implication, and assume $i,j,l\geq 0$.

\begin{enumerate}
\item If $l < j$ and $i+nl \geq rn$, let $i = bn+a$, where $b = \lfloor i/n\rfloor$.
Then $l < j$ implies $F^b (yz)^l$ divides $H_i y^j z^l = x^a F^b y^j z^l$, but $i+nl \geq rn$ implies $b+l \geq r$, so $F^b (yz)^l$ is a product $r$ factors, each of which, being either $F$ or $yz$, is in $I$, hence $H_i y^j z^l \in I^r$.

\item If $j \leq l < i+j$ and $i+j+(n-1)l \geq rn$, then $l-j \geq 0$ and $i-(l-j) > 0$.
Let $t = \lfloor (i-(l-j))/n\rfloor$ and let $i = bn+a$, where $0 \leq a <n$. Note that $b = \lfloor i/n\rfloor \geq t$; let $G = x^a F^{b-t}$.
Then $H_i y^j z^l = x^a F^b (yz)^j z^{l-j} = GF^t (yz)^j z^{l-j}$, but $G \in (x,y)^{a+(b-t)n}$ and $a + (b-t)n = a+bn-nt \geq i-((i-(l-j))/n)n = l-j$.
Thus $H_i y^j z^l = F^t (Gz^{l-j})(yz)^j \in I^t I^{l-j} I^j = I^{t+l}$, but $(i-(l-j)) + nj + n(l-j) = i+j + (n-1)l \geq rn$ implies $(i-(l-j))/n + j + (l-j) \geq r$ and so $t+l \geq r$, whence $H_i y^j z^l \in I^{t+l} \subseteq I^r$.

\item Finally, if $r\leq i+j \leq l$, then $H_i y^j = G_1 \cdots G_r D$ where each $G_t$ is a linear form in $(x,y)$ and $D$ is a form in $(x,y)^d$ for $d = i+j-r$.
Thus $H_i y^j z^l = (G_1 z)\cdots(G_r z) (D z^{l-r})$, but $(G_1 z)\cdots(G_r z) \in I^r$, hence so is $H_i y^j z^l$.

\end{enumerate}

We now turn to the forward implication, but first a bit of terminology.
By \textit{minimal factor of $H_i y^j z^l$ in $I$} we mean a factor of $H_i y^j z^l$ which is in $I$ but which has no factor of smaller degree which is in $I$.
Minimal factors divisible by $z$ will be called $z$-factors.
Given any $H_i y^j z^l$, note that the minimal factors of $H_i y^j z^l$ in $I$ (if any) are of the form $F$, $yz$, $xz$, and $L_u z$ (where $L_u$ is the linear form vanishing on $p_0$ and on $p_u$ for some $1\leq u \leq n$).
Let $P_s$ denote a product of $s$ $z$-factors.
Any product $P_s F^t$ which divides $H_i y^j z^l$ satisfies $0\leq t \leq b$, where $b = \lfloor i/n\rfloor$, and $0 \leq s \leq \min\set{l,i+j-nt}$.
It is easy to see that if there are values for $s$ and $t$ satisfying $s+t \geq r$, $0\leq t \leq b$ and $0 \leq s \leq \min\set{l,i+j-nt}$, then $H_i y^j z^l$ has a factor $P_s F^t \in I^r$ and hence $H_i y^j z^l \in I^r$, while, by part (a), if $H_i y^j z^l \in I^r$ then $H_i y^j z^l$ has a factor $P_s F^t \in I^r$ with $s+t\geq r$ satisfying $0\leq t \leq b$ and $0\leq s \leq \min\set{l,i+j-nt}$.

Assume $H_i y^j z^l \in I^r$, and hence there are values for $s$ and $t$ satisfying $s+t \geq r$, $0\leq t\leq b$ and $0\leq s \leq \min\set{l,i+j-nt}$.
Of course, $i,j,l\geq 0$.
Then there are three cases: (a) $l < j$; (b) $j\leq l < i+j$; and (c) $i+j\leq l$.
\begin{enumerate}[(a)]
\item If $l < j$, then, since $i-nt\geq 0$, we have $\min\set{l,i+j-nt} = l$, so $r\leq t+s \leq b+l \leq i/n + l$, hence $i+ln \geq rn$.
This is case (1).

\item Suppose $j\leq l < i+j$.
If $l\leq i+j-nt$, then $s\leq \min\set{l,i+j-nt} = l$ and $t\leq (i+j-l)/n$, so $r\leq t+s \leq (i+j-l)/n+l$, or, equivalently, $nr \leq i+j + (n-1)l$ as we wanted to show.
If instead $l > i+j-nt$, let $\delta = l-(i+j-nt)$.
Then $s \leq \min\set{l,i+j-nt} = i+j-nt=l-\delta$, so $t= (i+j-l+\delta)/n$ and $r \leq t+s \leq (i+j-l+\delta)/n + l-\delta = (i+j+(n-1)l)/n-\delta(n-1)/n \leq (i+j+(n-1)l)/n$ which again implies $nr \leq i+j + (n-1)l$.
This is case (2).

\item If $i+j \leq l$, then $\min\set{l,i+j-nt} = i+j-nt$, so $r\leq s+t\leq i+j-(n-1)t\leq i+j$.
This is case (3).
\end{enumerate}
\end{proof}

We can now use Lemmas \ref{OrdinaryPowerBasisLemma} and \ref{SymPowerBasisLem} to compute the resurgence, $\rho(I)$.

\begin{thm}\label{ThmMikeResurgence}
For the ideal $I$ of $n+1$ almost collinear points,
\[
\rho(I) = \dfrac{n^2}{n^2-n+1}.
\]
\end{thm}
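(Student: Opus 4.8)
The plan is to establish the two inequalities $\rho(I)\ge \tfrac{n^2}{n^2-n+1}$ and $\rho(I)\le \tfrac{n^2}{n^2-n+1}$ separately, working throughout with the basis elements $H_iy^jz^l$ and the combinatorial membership criteria furnished by Lemmas \ref{SymPowerBasisLem} and \ref{OrdinaryPowerBasisLemma}.

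For the lower bound I would produce an explicit family of non-containments whose ratios $m/r$ tend to $\tfrac{n^2}{n^2-n+1}$. For each integer $t\ge 1$, take the basis element with $i=tn^2$, $j=0$, and $l=tn(n-1)$, namely $H_{tn^2}\,z^{tn(n-1)}=F^{tn}z^{tn(n-1)}$. Since $i+ln=tn^2+tn^2(n-1)=tn^3$, Lemma \ref{SymPowerBasisLem}(a) shows this element lies in $I^{(m)}$ exactly when $m\le tn^2$; in particular it lies in $I^{(tn^2)}$. To decide membership in $I^r$, note that $0=j\le l<i+j=tn^2$, so only case (2) of Lemma \ref{OrdinaryPowerBasisLemma}(b) is available, and since $i+j+(n-1)l=tn^2+(n-1)tn(n-1)=tn(n^2-n+1)$, the element lies in $I^r$ exactly when $r\le t(n^2-n+1)$. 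Hence $I^{(tn^2)}\not\subseteq I^{\,t(n^2-n+1)+1}$, so $\rho(I)\ge \tfrac{tn^2}{t(n^2-n+1)+1}$ for every $t$, and letting $t\to\infty$ gives $\rho(I)\ge \tfrac{n^2}{n^2-n+1}$.

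For the upper bound it suffices to prove that $m(n^2-n+1)>rn^2$ implies $I^{(m)}\subseteq I^r$, for then $I^{(m)}\not\subseteq I^r$ forces $m(n^2-n+1)\le rn^2$, i.e.\ $m/r\le \tfrac{n^2}{n^2-n+1}$. Since $I^{(m)}$ is spanned by the basis elements $H_iy^jz^l$ it contains (Lemma \ref{SymPowerBasisLem}(b)), it is enough to show each such element lies in $I^r$. So assume $i,j,l\ge 0$, $i+ln\ge mn$, $i+j\ge m$, and $m(n^2-n+1)>rn^2$; since $n^2\ge n^2-n+1$ we get $m>r$. I then split into the three cases of Lemma \ref{OrdinaryPowerBasisLemma}(b) according to $l$. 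If $l<j$, then $i+nl\ge mn>rn$, which is case (1). If $i+j\le l$, then $i+j\ge m>r$, which is case (3). If $j\le l<i+j$, combine $\tfrac{n-1}{n}$ times the inequality $i+ln\ge mn$ with $\tfrac1n$ times the inequality $i+j\ge m$ (and use $j\ge \tfrac jn$) to get
\[
i+j+(n-1)l\ \ge\ i+(n-1)l+\tfrac jn\ \ge\ (n-1)m+\tfrac mn\ =\ \tfrac{m(n^2-n+1)}{n}\ >\ rn,
\]
which is case (2). Thus $H_iy^jz^l\in I^r$ in every case, which proves the upper bound and hence $\rho(I)=\tfrac{n^2}{n^2-n+1}$.

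I expect the only genuinely delicate point to be the displayed estimate in the case $j\le l<i+j$; the key realization is that $i+j+(n-1)l$ is bounded below by the weighted combination of the two defining inequalities of $I^{(m)}$ with weights $\tfrac{n-1}{n}$ and $\tfrac1n$, and that this choice of weights is precisely the one maximizing the resulting lower bound, producing the constant $(n-1)+\tfrac1n=\tfrac{n^2-n+1}{n}$. The lower-bound family is then obtained by reverse-engineering an element that makes this estimate an equality; choosing $i$ to be a multiple of $n^2$ forces $\tfrac in+l=i+j$ and clears all floor functions, so that the thresholds for membership in $I^{(m)}$ and in $I^r$ come out to the clean values $tn^2$ and $t(n^2-n+1)$.
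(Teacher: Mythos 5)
Your proof is correct and follows essentially the same route as the paper's: the lower-bound witness $F^{tn}z^{tn(n-1)}$ is identical, and the upper bound splits into the same three cases by the position of $l$ via Lemma \ref{OrdinaryPowerBasisLemma}. The only difference is in the middle case $j\le l<i+j$, where you establish $i+j+(n-1)l\ge rn$ directly from the weighted combination $\tfrac{n-1}{n}(i+nl)+\tfrac{1}{n}(i+j)$, while the paper reaches the same inequality by a short contradiction argument; your closing remark about the optimality of the weights and the tightness of the witness is a nice addition but not a structural change.
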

\begin{proof}
Consider $H_i y^j z^l$ where $i = tn^2$, $j=0$, and $l = tn^2-tn$, and let $m = tn^2$ and $r = tn^2-tn+t+1$.
Then $H_i y^j z^l \in I^{(m)}$ for every $t\geq 1$ by Lemma \ref{SymPowerBasisLem}(a), but $i + j + (n-1)l < rn$ so $I^{(m)}\not\subseteq I^r$ by Lemma \ref{OrdinaryPowerBasisLemma}(b)(2), hence $m/r \leq \rho(I)$ for all $t$.
Taking the limit as $t\to \infty$ gives $n^2/(n^2-n+1)\leq \rho(I)$.

Now suppose $m/r \geq n^2/(n^2-n+1)$ and hence $m\geq r$.
Consider $H_i y^j z^l \in I^{(m)}$.
Then $i+j \geq m$ and $i+nl \geq mn$ by Lemma \ref{SymPowerBasisLem}(a).
Now consider cases.

\begin{enumerate}[(a)]

\item If $l < j$, then $i+nl \geq mn \geq nr$ so $H_i y^j z^l \in I^r$ by Lemma \ref{OrdinaryPowerBasisLemma}(b)(1).

\item If $j \leq l < i+j$, use $i+j \geq m \geq rn^2/(n^2-n+1)$ and $i+nl \geq mn \geq rn^3/(n^2-n+1)$.
Arguing by contradiction, suppose that $i+j+(n-1)l < rn$.
Then $rn^2 > (n-1)i + i + nj + n(n-1)l = (n-1)(i+nl)+i+nj \geq rn^3(n-1)/(n^2-n+1)+i+nj$ so $rn^2(n^2-n+1) > rn^3(n-1) + (i+nj)(n^2-n+1)$ which simplifies to $rn^2 > (i+nj)(n^2-n+1)$.
Using $i+j \geq rn^2/(n^2-n+1)$, this gives $rn^2/(n^2-n+1) > i+nj \geq rn^2/(n^2-n+1)+(n-1)j$, which is impossible.
Thus $i+j+(n-1)l \geq rn$ so $H_i y^j z^l \in I^r$ by Lemma \ref{OrdinaryPowerBasisLemma}(b)(2).

\item If $i+j\leq l$, then $i+j\geq m \geq r$ so $H_i y^j z^l \in I^r$ by Lemma \ref{OrdinaryPowerBasisLemma}(b)(3).

\end{enumerate}

Thus $m/r \geq n^2/(n^2-n+1)$ implies $I^{(m)}\subseteq I^r$ by Lemma \ref{SymPowerBasisLem}(b), and so $\rho(I) \leq n^2/(n^2-n+1)$, i.e., $\rho(I) = n^2/(n^2-n+1)$.
\end{proof}

By definition, $\rho(I)$ is the supremum of rationals $m/r$ for which $I^{(m)}\not\subseteq I^r$, and thus it is possible to have $m/r \leq \rho(I)$ with $I^{(m)}\subseteq I^r$.
We next show that the bases found in previous lemmata allow us to completely answer the question of containment $I^{(m)}\subseteq I^r$ for all $m$ and $r$.

Containment will fail if and only if we can find $H_i y^j z^l \in I^{(m)}\setminus I^r$.
It is known that $I^{(m)} \not\subseteq I^r$ if $m < r$.
The constraints we have obtained show that if $m\geq r$, then $i+j \geq m$ and $i+nl \geq mn$ imply $i+j\geq r$ and $i+nl \geq rn$.
Thus, we have $H_i y^j z^l \in I^{(m)}\setminus I^r$ if and only if either
\begin{enumerate}
\item $m < r$, or
\item $m\geq r$, $i+j \geq m$ and $i+nl \geq mn$ (so $H_i y^j z^l \in I^{(m)}$), and $j \leq l < i+j$, $i+j + (n-1)l \leq rn-1$ (so $H_i y^j z^l \notin I^r$).
\end{enumerate}
If $m\geq r$, we have $H_i y^j z^l \in I^{(m)} \setminus I^r$ if and only if there is a non-negative integer lattice point $(i,j,l)$ satisfying $i+j\geq m$, $i+nl \geq mn$, $j\leq l \leq i+j-1$ and $i+j+(n-1)l\leq rn-1$.
In fact, we need only concern ourselves with $i$ and $l$, as the next lemma demonstrates.

\begin{lem}\label{LemmaCompleteSolution}
There is such a point $(i,j,l)$ if and only if there is a nonnegative integer lattice point $(i',l')$ satisfying $i' \geq m$, $i'+nl'\geq mn$, $l' < i'$ and $i' + (n-1)l' \leq rn-1$.
\end{lem}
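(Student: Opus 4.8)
The plan is to prove both implications by explicit substitutions; the content of the lemma is that the variable $j$ enters the four constraints defining ``such a point $(i,j,l)$'' only through the sum $i+j$, with the single exception of the inequality $i+nl\geq mn$, in which $j$ does not appear at all (and so is harmless to drop).

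For the forward implication, given a nonnegative integer lattice point $(i,j,l)$ with $i+j\geq m$, $i+nl\geq mn$, $j\leq l\leq i+j-1$, and $i+j+(n-1)l\leq rn-1$, I would set $i':=i+j$ and $l':=l$. Then $i',l'\geq 0$, and the four conditions transfer directly: $i'=i+j\geq m$; $i'+nl'=i+j+nl\geq i+nl\geq mn$ because $j\geq 0$; $l'=l\leq i+j-1=i'-1$, so $l'<i'$; and $i'+(n-1)l'=i+j+(n-1)l\leq rn-1$. For the backward implication, given a nonnegative integer lattice point $(i',l')$ with $i'\geq m$, $i'+nl'\geq mn$, $l'<i'$, and $i'+(n-1)l'\leq rn-1$, I would take $(i,j,l):=(i',0,l')$. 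Then $i,j,l\geq 0$, and again each condition follows at once: $i+j=i'\geq m$; $i+nl=i'+nl'\geq mn$; $j=0\leq l'=l$ and $l=l'\leq i'-1=i+j-1$, giving $j\leq l\leq i+j-1$; and $i+j+(n-1)l=i'+(n-1)l'\leq rn-1$.

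There is no real obstacle here; the only point that merits a second glance is the forward direction, where one must observe that enlarging $i$ to $i+j$ neither spoils $i+nl\geq mn$ (we add only the nonnegative quantity $j$) nor spoils $l\leq i+j-1$ (this is literally the hypothesis). The lemma is thus a bookkeeping reduction that removes $j$ from the subsequent analysis, as announced in the sentence preceding it; in the backward direction the choice $j=0$ does all the work.
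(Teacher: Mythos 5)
Your proof is correct and takes exactly the same approach as the paper: set $i'=i+j$, $l'=l$ in the forward direction and $(i,j,l)=(i',0,l')$ in the backward direction. The paper states these substitutions in two sentences without verifying the inequalities, which you have spelled out in full; there is no difference in substance.
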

\begin{proof}
Given $i'$ and $l'$, just take $i=i'$, $l=l'$, and $j=0$.
Given $(i,j,l)$, take $i' = i+j$ and $l' = l$.
\end{proof}

Therefore, $I^{(m)}\not\subseteq I^r$ if and only if either $m < r$ or there is a nonnegative integer lattice point $(i,l)$ satisfying
\begin{equation}\label{CompleteSolutionConstraints}
i\geq m,\quad i+nl\geq mn,\quad l\leq i-1,\quad\text{and}\quad i+(n-1)l\leq rn-1.
\end{equation}

\begin{thm}\label{ThmMikeMainResult}
Let $I$ be the ideal of $n+1$ almost collinear points and $m\geq r$ integers.
Then $I^{(m)} \not\subseteq I^r$  if and only if $m\leq \dfrac{n^2 r - n}{n^2-n+1}$. 
\end{thm}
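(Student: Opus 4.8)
The plan is to reduce the containment question, via the characterizations already established, to a purely arithmetic statement about the existence of a nonnegative integer lattice point satisfying the four inequalities in \eqref{CompleteSolutionConstraints}, and then to solve that arithmetic problem explicitly. By Lemma~\ref{LemmaCompleteSolution} and the discussion preceding it, for $m \geq r$ we have $I^{(m)} \not\subseteq I^r$ if and only if there exist nonnegative integers $i,l$ with $i \geq m$, $i + nl \geq mn$, $l \leq i-1$, and $i + (n-1)l \leq rn-1$. So the entire theorem amounts to showing: such a point exists if and only if $m \leq \tfrac{n^2 r - n}{n^2 - n + 1}$.

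For the forward direction (existence of the point forces the bound on $m$), I would start from a hypothetical solution $(i,l)$ and combine the inequalities to squeeze out the claimed bound on $m$. The natural move is to eliminate $l$: from $i + (n-1)l \leq rn - 1$ we get an upper bound on $l$, namely $l \leq \tfrac{rn - 1 - i}{n-1}$, and this is compatible with $i + nl \geq mn$ only if $i + n\cdot\tfrac{rn-1-i}{n-1} \geq mn$, i.e. $(n-1)i + n(rn - 1 - i) \geq mn(n-1)$, which rearranges to $-i + rn^2 - n \geq mn(n-1)$, that is $i \leq rn^2 - n - mn(n-1)$. Feeding in $i \geq m$ yields $m \leq rn^2 - n - mn(n-1)$, hence $m(n^2 - n + 1) \leq rn^2 - n$, which is exactly $m \leq \tfrac{n^2 r - n}{n^2 - n + 1}$. (One must be a little careful that the intermediate bound on $l$ can be taken to be an integer and still satisfies $l \leq i - 1$ and $l \geq 0$; I'd handle $n=1$-type degeneracies separately, but here $n \geq 3$ by hypothesis so $n - 1 \geq 2$ and there's room.)

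For the converse, assuming $m \leq \tfrac{n^2 r - n}{n^2 - n + 1}$ (equivalently $m(n^2-n+1) \leq rn^2 - n$), I would exhibit an explicit lattice point. The construction used in the resurgence proof (Theorem~\ref{ThmMikeResurgence}) suggests the right shape: there one took $i = tn^2$, $l = tn^2 - tn$. The general recipe should be $i = m$ together with the largest $l$ allowed by the fourth constraint, $l = \lfloor \tfrac{rn - 1 - m}{n-1} \rfloor$, and then one verifies $i + nl \geq mn$ using the hypothesis on $m$, checks $l \leq i - 1 = m - 1$ (this should follow since $m \geq r$ forces $rn - 1 - m$ to be comfortably smaller than $(n-1)m$), and checks $l \geq 0$. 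The floor is the only mildly delicate point: I expect that $m(n^2 - n + 1) \leq rn^2 - n$ gives enough slack that rounding down still leaves $i + nl \geq mn$ intact, but confirming the inequality survives the floor is exactly where the argument could get fussy.

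The main obstacle, then, is purely bookkeeping around integrality: translating the "real" inequality $m \leq \tfrac{n^2 r - n}{n^2 - n + 1}$ into the existence of an \emph{integer} point, making sure the floor in the choice of $l$ doesn't destroy the constraint $i + nl \geq mn$, and confirming that $l \leq i - 1$ and $l \geq 0$ hold in all the relevant ranges (in particular near the boundary $m = r$). I don't anticipate any conceptual difficulty beyond this; the structural work has all been done in Lemmas~\ref{SymPowerBasisLem} and \ref{OrdinaryPowerBasisLemma} and in the reduction to \eqref{CompleteSolutionConstraints}.
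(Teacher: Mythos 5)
Your reduction to the lattice-point problem \eqref{CompleteSolutionConstraints} (via Lemma~\ref{LemmaCompleteSolution}) matches the paper, and your forward direction is correct and in fact cleaner than what the paper does: eliminating $l$ by combining $i+nl\geq mn$ with $l\leq (rn-1-i)/(n-1)$ gives $i\leq rn^2-n-mn(n-1)$ directly, and then $i\geq m$ yields $m(n^2-n+1)\leq rn^2-n$ in one pass, whereas the paper detours through the case split $m\leq n$ versus $m\geq n$ (via the points $Q$ and $U$) and then spends a Claim collapsing the two cases.

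The converse, however, has a genuine gap, not just bookkeeping. Your proposed witness $i=m$, $l=\lfloor (rn-1-m)/(n-1)\rfloor$ fails the constraint $i+nl\geq mn$, and the floor is not the culprit: with $i=m$ there is often no valid integer $l$ at all. Concretely, take $n=3$ and $m=r=2$; then $m\leq (n^2r-n)/(n^2-n+1)=15/7$, so the theorem asserts $I^{(2)}\not\subseteq I^2$, but with $i=m=2$ the constraint $i+nl\geq mn$ forces $l\geq 4/3$, i.e.\ $l\geq 2$, while $i+(n-1)l\leq rn-1$ forces $2+2l\leq 5$, i.e.\ $l\leq 1$ --- the two are incompatible. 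So a witness must have $i>m$ in general. The paper sidesteps this by taking the witness to be the intersection point $P=(mn-n^2(m-r)-n,\,n(m-r)+1)$ of the two boundary lines $i+nl=mn$ and $i+(n-1)l=rn-1$, which one observes is \emph{automatically} a lattice point. With this choice, constraints (2) and (4) hold with equality; constraint (1), $P_i\geq m$, rearranges exactly to your hypothesis $m(n^2-n+1)\leq rn^2-n$; and $P_l\geq 0$, $P_i\geq 0$, and constraint (3) $P_l\leq P_i-1$ then require a short verification (using $m\geq r$, $n\geq 3$, and integrality of $r$ when $m=r$). The observation that the crossing point has integer coordinates is the missing idea; without it the ``fussy integrality'' you anticipate is not routine but is exactly where the naive construction breaks.
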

\begin{proof}
Let $P$ be the point $(i,l)$ where the lines $i+nl = mn$ and $i+(n-1)l = rn-1$ cross; i.e., $P = (mn-n^2(m-r)-n,n(m-r)+1)$.
Let $Q$ be the point where the lines $l=i-1$ and $i+nl=mn$ cross; i.e., $Q = (n(m+1)/(n+1),(nm-1)/(n+1))$.
Let $U$ be the point where the lines $m=i$ and $i+nl=mn$ cross; i.e., $U = (m,m(n-1)/n)$.
Then (\ref{CompleteSolutionConstraints}) has a solution if and only if the $i$-coordinate of $P$ is at least as big as the maximum of the $i$-coordinates of $Q$ and $U$. Let $Q_i$ and $U_i$ be these $i$-coordinates; then $\max(Q_i,U_i) = Q_i$ if $m\leq n$, while $\max(Q_i,U_i) = U_i$ if $m\geq n$.

Thus, assuming $m\geq r$, (\ref{CompleteSolutionConstraints}) has a solution if and only if either $m\leq n$ and $Q_i \leq P_i$, or $m\geq n$ and $U_i \leq P_i$.
But $Q_i \leq P_i$ is the same as $n(m+1)/(n+1)\leq mn-n^2(m-r)-n$ or $m \leq r(n+1)/n-(n+2)/n^2 = (rn^2+rn-n-2)/n^2$, and $U_i \leq P_i$ is the same as $m\leq mn-n^2(m-r)-n$ or $m\leq (n^2r-n)/(n^2-n+1)$.

Thus, $I^{(m)}\not\subseteq I^r$ holds if and only if either
\begin{enumerate}[(a)]
\item $m < r$, or
\item $m\geq r$ and either
\begin{enumerate}[(i)]
\item $m\leq n$ and $m\leq (rn^2+rn-n-2)/n^2$, or
\item $m\geq n$ and $m\leq (n^2r-n)/(n^2-n+1)$.
\end{enumerate}
\end{enumerate}
Note, however, that if $1 \leq m < r$, then $r\geq 2$ and so $m\leq (rn^2+rn-n-2)/n^2$ holds (since $r\leq (rn^2+rn-n-2)/n^2$ if $r\geq 2$ and $n\geq 3$), and also $m\leq (n^2r -n)/(n^2-n+1)$ (since $r\leq (rn^2+rn-n-2)/n^2$ if $r\geq 2$ and $n\geq 3$).
Thus $m < r$ is subsumed by $m\leq n$ and $m\leq (rn^2+rn-n-2)/n^2$, or $m\geq n$ and $m\leq (n^2r-n)/(n^2-n+1)$.

However, we can do even better by ridding ourselves of the need for the two cases $m < n$ and $m\geq n$.

\textbf{Claim:} We have $I^{(m)}\not\subseteq I^r$ if and only if $m\leq \frac{n^2 r - n}{n^2-n+1}$.

\textbf{Proof of Claim. }
If $m<n$ and $m\leq (rn^2+rn-n-2)/n^2$, then routine arithmetic demonstrates $m\leq r$.
Now, if $I^{(m)}\not\subseteq I^r$ then we already know that either $m<n$ and $m\leq \frac{rn^2+rn-n-2}{n^2}$ or $m\geq n$ and $m\leq \frac{rn^2-n}{n^2-n+1}$.
If $m<n$ and $m\leq \frac{rn^2+rn-n-2}{n^2}$, then we now know that $m\leq r$, but $I^{(m)}\not\subset I^r$ implies $r > 1$, and, as we are assuming $n\geq 3$, it follows that $r\leq \frac{rn^2-n}{n^2-n+1}$, and hence $m\leq (rn^2-n)/(n^2-n+1)$.
Conversely, assume $m\leq \frac{rn^2-n}{n^2-n+1}$.
If $m\geq n$, then we already know that $I^{(m)}\not\subseteq I^r$, so assume $m<n$.
If $m < r$, then $I^{(m)}\not\subseteq I^r$, so we may also assume $r \leq m$.
So either $m=r$ or $r+1\leq m\leq \frac{rn^2-n}{n^2-n+1}$.
If $r+1\leq  m\leq \frac{rn^2-n}{n^2-n+1}$, then routine arithmetic shows that $n < \frac{n^2+1}{n-1} \leq r \leq m$, which contradicts $m < n$.
Thus we must have $m=r < n$. But $m=r=1$ is impossible since $m=r=1$ implies $1\leq \frac{n^2-n}{n^2-n+1}$, which is false, so we must have $1 < m=r < n$.
More arithmetic demonstrates that $m\leq \frac{rn^2+rn-n-2}{n^2}$ which we have already showed implies $I^{(m)}\not\subseteq I^r$.
\end{proof}

These initial containment results for almost collinear points stand
in contrast to the results obtained in the next section for
nearly-complete intersections. In particular, the results for almost
collinear points depend on the number $n$ of points on the line,
whereas the results we obtain for nearly-complete intersections do
not.

\subsection{Ideals of Nearly-Complete Intersections}\label{SSecNCI}

Let $R=\field[x,y,z]$ and $n\in\N$.
Suppose we have $n$ points on $L_1$, defined by $x=0$, say $p_1,\ldots, p_n$, and $n$ points on $L_2$, defined by $y=0$, say $p_{n+1}, \ldots, p_{2n}$, all of multiplicity $m$.
We assume that there is one additional point $p_0$ of multiplicity $m$ at the intersection of $L_1$ and $L_2$, as in Figure \ref{FigAnnikaCase}.

\begin{figure}[h]
\begin{center}
\begin{tikzpicture}
\filldraw [black] (0,0) circle (2.5pt) (1,0) circle (1.5pt)
(3/2,0)circle (1.5pt) (2,0) circle (1.5pt) (5/2,0) circle (1.5pt)
(0,1) circle (1.5pt) (0,3/2) circle (1.5pt) (0,2) circle (1.5pt)
(0,5/2) circle (1.5pt); \draw (-1,0) -- (3.5,0); \draw (1,0)
node[below] {$p_1$}; \draw (1.8,-0.1) node[below] {$\ldots$}; \draw (0,0)
node[below left] {$p_0$}; \draw
(2.5,0) node[below] {$p_n$}; \draw (3,0) node[above right] {$x=0$};
\draw (0,-0.5) -- (0,3.5); \draw (-0,2.5) node[left] {$p_{2n}$};
\draw (-0.3,1.8) node[left] {$\vdots$}; \draw (0,1) node[left]
{$p_{n+1}$} ; \draw (0,3) node[right] {$y=0$}; \draw (3,-0.5) --
(-1,3.5); \draw (-1,3) node[above left] {$z=0$};
\end{tikzpicture}
\end{center}
\caption{$2n+1$ points of a nearly-complete intersection}\label{FigAnnikaCase}
\end{figure}

The ideal defining these $2n+1$ points is
$$I =(x,y)\cap \K{\bigcap_{i=1}^n(x,z-\A_i y)\cap \bigcap_{i=1}^n
(y,z-\beta_i x)},$$ where $\A_i, \beta_i \in \field$, $\sI(p_i)=(x,z-\A_i
y)$ for $i=1,\ldots,n$, and $\sI(p_i)=(y,z-\beta_i x)$ for
$i=n+1,\ldots,2n$. Then for any $m\in\N$,
$$I^{(m)}=(x,y)^m\cap\bigcap_{i=1}^{2n}\sI(p_i)^m.$$

Define a polynomial $F\in R$ by
$$F=z^n-\prod_{i=1}^n(z-\beta_i x)-\prod_{i=1}^n(z-\A_i y).$$

\begin{prop}\label{SymbolicAsIntersection}
Let $I$ and $F$ be as defined above. Then
$I=(x,y)\cap(xy,F)=(xy,xF,yF)$ and for any $m\in\N$,
$I^{(m)}=(x,y)^m\cap(xy,F)^m$.
\end{prop}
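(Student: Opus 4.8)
The plan is to prove the three claimed equalities in stages. The easiest is the ideal-of-generators identity $(x,y)\cap(xy,F)=(xy,xF,yF)$: since $xy, xF, yF$ all clearly lie in both $(x,y)$ and $(xy,F)$, one containment is immediate. For the reverse, take $g\in (x,y)\cap(xy,F)$ and write $g = a\,xy + b\,F$ with $a,b\in R$; since $g\in(x,y)$ and $xy\in(x,y)$, we get $bF\in(x,y)$. The key point — and the one deserving care — is that $F=z^n-\prod(z-\beta_i x)-\prod(z-\alpha_i y)$ modulo $(x,y)$ reduces to $z^n-z^n-z^n=-z^n$, so $F\equiv -z^n \pmod{(x,y)}$; hence $bF\in(x,y)$ forces $b\in(x,y)$, say $b = b_1 x + b_2 y$, and then $g = a\,xy + b_1(xF)+b_2(yF)\in(xy,xF,yF)$. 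This establishes $I=(x,y)\cap(xy,F)=(xy,xF,yF)$ once we know $I=(x,y)\cap(xy,F)$.

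To see $I=(x,y)\cap(xy,F)$, I would first observe that $(xy,F)$ is the ideal of the complete intersection of the conic $xy=0$ with the cubic(-or-higher) curve $F=0$, i.e.\ of the $2n$ points $p_1,\dots,p_{2n}$ together with... — more precisely, I must check that $(xy,F) = \bigcap_{i=1}^n(x,z-\alpha_i y)\cap\bigcap_{i=1}^n(y,z-\beta_i x)$. On the line $x=0$ we have $xy\equiv 0$ and $F\equiv z^n-\prod(z-\alpha_i y)$, whose zeros on $x=0$ are exactly the points with $z=\alpha_i y$, i.e.\ $p_1,\dots,p_n$; symmetrically on $y=0$ we recover $p_{n+1},\dots,p_{2n}$. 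Since $xy$ and $F$ share no component and cut out a zero-dimensional scheme of the expected degree $2\cdot n$ (here using that $\deg F$ is arranged so the intersection is reduced and of length $2n$ — this is where one checks the $\alpha_i$ distinct, $\beta_i$ distinct, and the configuration genericity built into the setup), the complete-intersection ideal $(xy,F)$ equals the radical ideal of those $2n$ points. Intersecting with $(x,y)=I(p_0)$ then gives $I$ by definition of $I(Z)$ as the intersection of the point ideals. The main obstacle here is the scheme-theoretic/degree bookkeeping verifying that $(xy,F)$ is radical and cuts out precisely the $2n$ points $p_1,\dots,p_{2n}$ — i.e.\ that $F$ meets each line transversally at those and only those points.

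Finally, for the symbolic-power statement $I^{(m)}=(x,y)^m\cap(xy,F)^m$, I would argue exactly as in the almost-collinear case (cf.\ the setup preceding Lemma~\ref{SymPowerBasisLem}): by definition $I^{(m)}=I(p_0)^m\cap\bigcap_{i=1}^{2n}I(p_i)^m=(x,y)^m\cap\big(\bigcap_{i=1}^{2n}I(p_i)^m\big)$. Now $(xy,F)$ defines the complete intersection $p_1+\cdots+p_{2n}$, so by the standard fact that a complete intersection satisfies $J^{(m)}=J^m$ (Lemma~5 and Theorem~2 of Appendix~6 of \cite{Zariski-Samuel}, already invoked in the excerpt), we get $\bigcap_{i=1}^{2n}I(p_i)^m=(xy,F)^{(m)}=(xy,F)^m$. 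Combining, $I^{(m)}=(x,y)^m\cap(xy,F)^m$, completing the proof. The only real content beyond the first two identities is invoking that $(xy,F)$ is a complete intersection, which we have just established.
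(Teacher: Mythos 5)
Your proposal follows essentially the same route as the paper: identify $(xy,F)$ as the radical complete-intersection ideal of $p_1,\dots,p_{2n}$ via transversality of $xy=0$ and $F=0$, deduce $(x,y)\cap(xy,F)=(xy,xF,yF)$ by forcing the coefficient of $F$ into $(x,y)$ (the paper does this by evaluating at $[0,0,1]$, you by reducing mod $(x,y)$ — equivalent moves), and then invoke Zariski--Samuel for $J^{(m)}=J^m$ on each complete-intersection factor. One small slip: when you restrict $F$ to $x=0$ you write $F\equiv z^n-\prod(z-\alpha_i y)$, but since $\prod(z-\beta_i x)|_{x=0}=z^n$ the two $z^n$ terms cancel and the correct restriction is $F|_{x=0}=-\prod(z-\alpha_i y)$; this is what actually has its zeros precisely at $z=\alpha_i y$ (your later reduction $F\equiv -z^n\pmod{(x,y)}$ is computed consistently with the corrected form), so the slip is harmless but worth fixing.
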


\begin{proof}
Let $I':=\bigcap_{i=1}^n(x,z-\A_i y)\cap\bigcap_{i=1}^n (y,z-\beta_i
x)$ and consider the two curves $C_1: xy=0$ and $C_2: F=0$. Then
$C_1$ and $C_2$ intersect exactly at the $2n$ points
$p_1,\ldots,p_{2n}$, and transversely at that. Therefore $I'$ is
generated by the forms defining $C_1$ and $C_2$, hence $I'=(xy,F)$
and therefore $I=(x,y)\cap (xy,F)$.

Also, note that $(x,y)\cap(xy,F)\supseteq (xy,xF,yF)$.

Suppose $g\in (x,y)\cap (xy,F)$, say $g=k_1(xy)+k_2F$ with $k_1,k_2\in R$.
Since $g\in (x,y)$, we have
\begin{eqnarray*} 0 &=& g([0,0,1])\\
    &=& (k_1(xy)+k_2F)([0,0,1])\\
    &=& 0+k_2([0,0,1])\cdot F([0,0,1])\\
    &=& k_2([0,0,1]) \cdot(-1)\\
    &=& -k_2([0,0,1])
\end{eqnarray*}
by definition of $F$.
Thus $k_2([0,0,1])=0$ and hence $k_2\in (x,y)$.
But then $g\in (xy,(x,y)F)=(xy,xF,yF)$ and hence $I=(x,y)\cap(xy,F)= (xy,xF,yF)$.

Since the ideals $(x,y)$ and $(xy,F)$ are complete intersections,
they are each generated by a regular sequence. Therefore, by Lemma 5 and Theorem 2 of Appendix 6 of \cite{Zariski-Samuel}, symbolic
and ordinary powers of the ideals coincide.
Thus $I^{(m)}=(x,y)^{(m)}\cap (xy,F)^{(m)}=(x,y)^m\cap (xy,F)^m$.
\end{proof}

As in Subsection \ref{SSecAlmostCP}, we will use a vector space basis to describe $I^{(m)}$ and $I^r$.

\begin{prop}\label{LemXSpan2}
Let $t\geq 0$.
Then every monomial $x^ay^bz^c$ with
$a,b,c\in\N_0$ and $c\leq t$ is contained in the $\field$-span $\sS$ of
$\M{x^ay^bz^cF^d}{a,b,c,d\in\N_0, c<n, dn\leq t}$.
\end{prop}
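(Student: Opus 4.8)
The plan is to isolate the one structural feature of $F$ that matters. Expanding the two products in $F=z^n-\prod_{i=1}^n(z-\beta_i x)-\prod_{i=1}^n(z-\A_i y)$, each product is monic of degree $n$ as a polynomial in $z$, so their $z^n$-terms combine with the leading $z^n$ to give
\[
F=-z^n+G,\qquad G\in\field[x,y,z]\ \text{with $z$-degree at most}\ n-1 .
\]
In other words $G$ is a $\field$-linear combination of monomials $x^ay^bz^c$ with $c<n$, and rearranging gives $z^n=-F+G$. Hence $z^n$, and therefore $x^ay^bz^n$ for any $a,b$, lies in the $\field$-span of $F$ times a monomial together with monomials whose $z$-exponent is still $<n$. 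This is the only identity the proof uses.

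First I would replace the statement by the sharper one: for every $a,b,c\in\N_0$, the monomial $x^ay^bz^c$ lies in the $\field$-span of $\setof{x^{a'}y^{b'}z^{c'}F^d}{c'<n,\ dn\le c}$. The proposition then follows at once, since $dn\le c\le t$. I would prove the sharper claim by strong induction on $c$. If $c<n$ there is nothing to prove, as $x^ay^bz^c=x^ay^bz^cF^0$ and $0\cdot n\le c$. If $c\ge n$, write $z^c=z^{c-n}z^n$ and substitute $z^n=-F+G$:
\[
x^ay^bz^c=-\,x^ay^bz^{c-n}F+x^ay^bz^{c-n}G .
\]
Every monomial appearing in $x^ay^bz^{c-n}G$ has $z$-exponent at most $(c-n)+(n-1)=c-1<c$, so by induction each lies in the span of $\setof{x^{a'}y^{b'}z^{c'}F^d}{c'<n,\ dn\le c-1}$, which is contained in the target span. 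For the term $x^ay^bz^{c-n}F$, the induction hypothesis applied to $x^ay^bz^{c-n}$ (valid since $c-n<c$) puts it in the span of $\setof{x^{a'}y^{b'}z^{c'}F^d}{c'<n,\ dn\le c-n}$; multiplying by $F$ sends each such term to $x^{a'}y^{b'}z^{c'}F^{d+1}$ with $(d+1)n=dn+n\le(c-n)+n=c$, again inside the target span. Combining the two pieces finishes the induction.

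I do not expect a genuine obstacle. The two points to be careful about are that the $z^n$-terms of the two products really do cancel against the leading $z^n$, so that $F$ has $z$-degree exactly $n$ with an invertible (namely $-1$) leading coefficient — this is precisely what licenses solving for $z^n$ — and that the bound $dn\le c$ is tracked correctly through the recursion, noting in particular that multiplying a basis element $x^{a'}y^{b'}z^{c'}F^d$ by a monomial in $x$ and $y$ changes neither $c'$ nor $d$ and hence preserves membership in the relevant span. Beyond that it is a routine strong induction.
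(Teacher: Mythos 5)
Your proof is correct and uses the same key idea as the paper's: since $F$, viewed as a polynomial in $z$, is monic of degree $n$ up to the sign $-1$, one can rewrite $z^n$ as $-F$ plus terms of $z$-degree at most $n-1$ and then induct on the $z$-exponent. The only difference is cosmetic — you peel off one factor of $F$ per inductive step, while the paper writes $t = qn + r$ and subtracts $x^a y^b z^r(-F)^q$ all at once — and both recursions track the bound $dn \le t$ correctly.
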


\begin{proof} Use induction on $t$.
For $t<n$, the condition $dn\leq t$ implies that $d=0$, so $$\sS=\langle\M{x^ay^bz^c}{a,b,c\in\N_0, c<n}\rangle.$$
In particular, $x^ay^bz^c$ with $c\leq t<n$ is in $\sS$.

For $t\geq n$, assume that $x^ay^bz^c\in\sS$ for every $a,b\in\N_0$
and $c< t$.
Take $a,b\in\N_0$ and consider the polynomial $G=x^ay^bz^t-x^ay^bz^r(-F)^q$ with $t=qn+r$ and $0\leq r<n$.
Then by the definition of $F$, $G$ is a polynomial of degree less than $t$ in $z$ with coefficients in $\field[x,y]$.
Since $qn\leq t$, we have $G\in\sS$ by induction.
But $r<n$, so $x^ay^bz^r(-F)^q=(-1)^qx^ay^bz^rF^q\in\sS$ and hence $x^ay^bz^t=G+x^ay^bz^r(-F)^q\in\sS$.
\end{proof}

\begin{lem}\label{RingBasis2}
The set $\sA=\M{x^ay^bz^cF^d}{a,b,c,d\in\N_0, c<n}$ is linearly
independent over $\field$ and spans $R$ as a $\field$-vector space.
\end{lem}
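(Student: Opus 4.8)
The plan is to mimic the proof of Lemma \ref{RingBasis}, adapting it to the two-variable-plus-$F$ situation. First I would establish that $\sA$ spans $R$: by Proposition \ref{LemXSpan2}, for every $t \geq 0$ each monomial $x^a y^b z^c$ with $c \leq t$ lies in the $\field$-span of $\{x^a y^b z^c F^d : c < n,\ dn \leq t\}$, and letting $t \to \infty$ shows that every monomial $x^a y^b z^c$ (hence all of $R$, since the monomials span $R$) lies in $\langle \sA \rangle$. So $\sA$ spans $R$.

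For linear independence, I would argue degree by degree, exactly as in Lemma \ref{RingBasis}. The elements $x^a y^b z^c F^d$ are homogeneous of degree $a + b + c + dn$, so any linear dependence among them decomposes into linear dependences among the elements of a fixed degree $e$. Thus it suffices to show that, in each degree $e$, the number of elements $x^a y^b z^c F^d$ of degree $e$ (with $a,b,c,d \geq 0$ and $c < n$) equals $\dim_\field R_e = \binom{e+2}{2}$. Combined with the spanning statement restricted to the degree-$e$ component (which forces these elements to span $R_e$), a spanning set of the right cardinality in a finite-dimensional space is a basis, giving independence in degree $e$ and hence overall.

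The main obstacle — really the only nontrivial point — is the counting: I must show that the number of quadruples $(a,b,c,d) \in \N_0^4$ with $c < n$ and $a + b + c + dn = e$ equals $\binom{e+2}{2}$, i.e. the number of triples $(a',b',c') \in \N_0^3$ with $a' + b' + c' = e$. I would set up a bijection. Given $(a,b,c,d)$ with $c < n$, map it to $(a, b, c + dn)$; this lands in the triple-solution set since $a + b + (c+dn) = e$. Conversely, given $(a', b', c')$ with $a' + b' + c' = e$, write $c' = dn + c$ by division with $0 \leq c < n$, and map to $(a', b', c, d)$; this has $c < n$ and $a' + b' + c + dn = e$. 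These maps are mutually inverse, establishing the bijection and hence the equality of cardinalities. (Alternatively, one notes that multiplying by $F^d$ and expanding shows the span of the degree-$e$ part of $\sA$ is all of $R_e$, and uses the division-algorithm decomposition of the $z$-exponent directly; either phrasing works.)

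Finally I would assemble these: $\sA$ spans $R$; in each degree $e$ the finitely many degree-$e$ elements of $\sA$ span $R_e$ and number exactly $\dim_\field R_e$, so they form a basis of $R_e$; since a linear dependence in $R$ must be homogeneous, $\sA$ is linearly independent; therefore $\sA$ is a $\field$-basis of $R$, completing the proof.
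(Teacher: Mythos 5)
Your proof is correct and follows essentially the same route as the paper: spanning via Proposition \ref{LemXSpan2}, then a degree-by-degree counting argument showing $|\sA_e| = \binom{e+2}{2} = \dim_\field R_e$ to conclude independence. The only difference is that you spell out the division-algorithm bijection $(a,b,c,d) \leftrightarrow (a,b,c+dn)$ explicitly, where the paper simply asserts the cardinality equality.
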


\begin{proof}
By Proposition \ref{LemXSpan2}, each monomial $x^ay^bz^c$ is in $\langle\sA\rangle$, and since $$R=\langle\M{x^ay^bz^c}{a,b,c\in\N_0}\rangle,$$ we get that $R=\langle\sA \rangle$.
For each $s\in\N_0$, define a subset $\sA_s$ of $\sA$ by
$$\sA_s:=\M{x^ay^bz^cF^d}{a,b,c,d\in\N_0, c<n, a+b+c+dn=s}.$$
Then $\sA=\bigsqcup_{s=0}^{\infty}\sA_s$ and the elements of $\sA_s$ are
homogeneous of degree $s$, and therefore $\langle\sA_s\rangle=R_s$.
By homogeneity of $\sA_s$, all elements in $\sA_s$ are linearly independent from elements in $\sA\setminus \sA_s$, and $|\sA_s|={s+(3-1)\choose 3-1}=\dim R_s$ (the number of partitions of $s$ into three parts in the non-negative integers), which means that $\sA_s$ not only spans $R_s$ but also has the same size as a (linearly independent) monomial basis for $R_s$.
Therefore, $\sA_s$ is linearly independent as well, and so is $\sA$.
\end{proof}

\begin{prop}\label{SymPowerBasisProp}
Let $m\in\N$. Then
\begin{enumerate}
    \item The set $\sB=\M{x^ay^bz^cF^d}{a,b,c,d\in\N_0, c<n, a+b\geq m}$ is linearly independent over $\field$ and spans $(x,y)^m$ as a $\field$-vector space.
    \item The set $\sC=\M{x^ay^bz^cF^d}{a,b,c,d\in\N_0, c<n, \min(a,b)+d\geq m}$ is linearly independent over $\field$ and spans $(xy,F)^m$ as a $\field$-vector space.
\end{enumerate}
Therefore, $$I^{(m)}=\langle x^ay^bz^cF^d | a,b,c,d\in\N_0, c<n, a+b\geq m, \min(a,b)+d\geq m\rangle.$$
\end{prop}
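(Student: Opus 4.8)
The plan is to prove the two basis statements separately and then read off the description of $I^{(m)}$ from Lemma~\ref{VSpaceBasisLemma}. Linear independence over $\field$ is immediate in both cases: $\sB$ and $\sC$ are subsets of $\sA$, which is linearly independent by Lemma~\ref{RingBasis2}. So the whole proof reduces to the two spanning claims, and I would prove both by induction on $m$, using (for $m\geq 1$) the ideal decompositions $(x,y)^m = x\,(x,y)^{m-1} + y\,(x,y)^{m-1}$ and $(xy,F)^m = xy\,(xy,F)^{m-1} + F\,(xy,F)^{m-1}$, which hold because $(x,y)=(x)+(y)$ and $(xy,F)=(xy)+(F)$ as ideals (and $(x)\cdot J = xJ$ for any ideal $J$).

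For part~(1), write $\sB_m$ for the set $\sB$, to record its dependence on $m$; with the convention $\sB_0=\sA$, the base case $m=0$ is Lemma~\ref{RingBasis2}, since $\langle\sA\rangle=R=(x,y)^0$. For $m\geq 1$ the key point is the set identity $\sB_m = x\,\sB_{m-1}\cup y\,\sB_{m-1}$: multiplying a basis element $x^ay^bz^cF^d$ with $a+b\geq m-1$ by $x$ or by $y$ produces an element of $\sB_m$, and conversely if $a+b\geq m\geq 1$ then $a\geq 1$ or $b\geq 1$, so $x^ay^bz^cF^d$ is $x$ (resp.\ $y$) times an element of $\sB_{m-1}$. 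Passing to $\field$-spans and using $\field$-linearity of multiplication, $\langle\sB_m\rangle = x\langle\sB_{m-1}\rangle + y\langle\sB_{m-1}\rangle = x\,(x,y)^{m-1}+y\,(x,y)^{m-1} = (x,y)^m$ by the inductive hypothesis and the decomposition above.

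Part~(2) runs in parallel with $\sC_m$ in place of $\sB_m$: again $\sC_0=\sA$ gives the base case, and for $m\geq 1$ one checks $\sC_m = xy\,\sC_{m-1}\cup F\,\sC_{m-1}$. Multiplying $x^ay^bz^cF^d$ (with $\min(a,b)+d\geq m-1$) by $xy$ raises $\min(a,b)$ by one, while multiplying by $F$ raises $d$ by one; conversely, if $\min(a,b)+d\geq m\geq 1$ then either $d\geq 1$, and we factor off one $F$, or $d=0$, whence $\min(a,b)\geq m\geq 1$ forces $a,b\geq 1$ and we factor off $xy$. Note that multiplication by $F$ leaves the $z$-exponent unchanged (still $<n$), so no rewriting of powers of $z$ is needed; then $\langle\sC_m\rangle = xy\langle\sC_{m-1}\rangle + F\langle\sC_{m-1}\rangle = (xy,F)^m$.

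Finally, by Proposition~\ref{SymbolicAsIntersection} we have $I^{(m)}=(x,y)^m\cap(xy,F)^m$, and $\sA$ is a $\field$-basis of $R$ containing the basis $\sB$ of $(x,y)^m$ and the basis $\sC$ of $(xy,F)^m$; Lemma~\ref{VSpaceBasisLemma} then gives that $\sB\cap\sC=\M{x^ay^bz^cF^d}{a,b,c,d\in\N_0,\ c<n,\ a+b\geq m,\ \min(a,b)+d\geq m}$ is a $\field$-basis of $I^{(m)}$, which is the asserted description. The step needing the most care is the verification of the two set identities $\sB_m=x\sB_{m-1}\cup y\sB_{m-1}$ and $\sC_m=xy\sC_{m-1}\cup F\sC_{m-1}$ --- in particular their reverse inclusions, where one must check that the defining inequalities survive factoring off $x$, $y$, $xy$, or $F$. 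The alternative, more computational route --- expanding $(x,y)^m$ and $(xy,F)^m$ directly in monomials and pushing the $z$-exponent below $n$ via $z^n=-F+(F+z^n)$ with $F+z^n\in(x,y)$, as in the proof of Proposition~\ref{LemXSpan2} --- is exactly what this $m$-induction lets us bypass.
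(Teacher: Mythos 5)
Your proof is correct, but it takes a different route than the paper. The paper handles both spanning claims directly, without induction on $m$: it writes $(x,y)^m = (x^a y^b \mid a+b=m) = \sum_{a+b=m} x^a y^b R$ and $(xy,F)^m = (x^a y^a F^d \mid a+d=m)$, expands $R$ in the $\sA$-basis of Lemma~\ref{RingBasis2} (so every element becomes a $\field$-combination of $x^{a+a'}y^{b+b'}z^{c}F^{d+d'}$ with $c<n$, which visibly satisfies the stated inequality), and notes that the opposite containment is immediate because $a+b\geq m$ (resp.\ $\min(a,b)+d\geq m$) already forces membership in the $m$th power; the proposition then follows from Lemma~\ref{VSpaceBasisLemma} exactly as in your last step. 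Your inductive argument via $\sB_m=x\sB_{m-1}\cup y\sB_{m-1}$, $\sC_m=xy\sC_{m-1}\cup F\sC_{m-1}$, and the ideal decompositions $(x,y)^m=x(x,y)^{m-1}+y(x,y)^{m-1}$, $(xy,F)^m=xy(xy,F)^{m-1}+F(xy,F)^{m-1}$ reaches the same conclusion; it is a bit longer, but the ``factor off one generator'' step makes the bookkeeping with the inequalities transparent and anticipates the spirit of the harder argument in Proposition~\ref{OrdinaryPowerBasisLemma2}. Both routes rest on the same two inputs, Lemma~\ref{RingBasis2} (as your base case, or directly) and Lemma~\ref{VSpaceBasisLemma}. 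One small caveat: your closing remark that the induction ``bypasses'' the $z$-reduction overstates the contrast --- the paper's direct argument also needs no extra $z$-reduction here, since expanding $R$ in the $\sA$-basis already keeps $c<n$ and multiplying by $x^ay^b$ or $x^ay^aF^d$ leaves the $z$-exponent untouched.
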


\begin{proof} Since $\sB, \sC\subseteq \sA$ from Lemma \ref{RingBasis2}, linear independence of $\sB$ and $\sC$ over $\field$ is immediate.

By definition, Proposition \ref{LemXSpan2}, and Lemma \ref{RingBasis2},
\begin{eqnarray*} (x,y)^m &=& (x^ay^b|a,b\in\N_0, a+b=m) \\
    &=& \sum_{a+b=m}x^ay^bR\\
    &\subseteq& \langle x^ay^bz^cF^d| a,b,c,d\in\N_0, c<n, a+b\geq m
    \rangle\\
    &\subseteq& (x,y)^m
\end{eqnarray*}
and likewise
\begin{eqnarray*} (xy,F)^m &=& (x^ay^aF^d|a,d\in\N_0, a+d=m)\\
    &=& \langle x^ay^bz^cF^d| a,b,c,d\in\N_0, c<n, \min(a,b)+d\geq m
    \rangle .
\end{eqnarray*}
Hence $I^{(m)}=(x,y)^m\cap(xy,F)^m=\langle \sB\rangle \cap \langle\sC\rangle$, and Lemma \ref{VSpaceBasisLemma} then gives $I^{(m)}=\langle\sB\cap\sC\rangle= \langle x^ay^bz^cF^d | a,b,c,d\in\N_0, c<n, a+b\geq m, \min(a,b)+d\geq m\rangle$.
\end{proof}

We will describe $I^r$ in a similar fashion.

\begin{lem}\label{OrdinaryPowerBasisLemma2Aux1}
Let $r\in\N$.
Define one set $\sS_r$ by $$\sS_r=\M{(a,b,c,d)\in\N_0^4}{c<n,\min(a,b)+d\geq r, a+b+d\geq 2r, \txt{and} a+b\geq r},$$ and another set, $\sT_r$, by
$$\displaystyle \sT_r=\{(a,b,c,d)\in\N_0^4 | c<n, (\min(a,b)+d\geq r \txt{if} d\leq \max(a,b)-\min(a,b)),$$ $$\txt{} (a+b+d\geq 2r \txt{if}\max(a,b)-\min(a,b)<d<b+a), \txt{and} (a+b\geq r \txt{if} a+b\leq d)\}.$$
Then $\sS_r=\sT_r$.
\end{lem}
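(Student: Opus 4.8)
The claim is a purely combinatorial identity between two subsets of $\N_0^4$: the "simple" description $\sS_r$, with the same three inequalities $\min(a,b)+d\geq r$, $a+b+d\geq 2r$, $a+b\geq r$ imposed unconditionally, and the "case-split" description $\sT_r$, where each of the same three inequalities is only required in a certain range of $d$ relative to $a$ and $b$. The ranges partition the possible values of $d$: writing $\delta = \max(a,b)-\min(a,b)$ and $\sigma = a+b$, we have $d\le \delta$, or $\delta < d < \sigma$, or $\sigma \le d$ (note $\delta\le\sigma$ always, and $\delta$ and $\sigma$ have the same parity, so these three ranges are exhaustive and overlap only at the endpoints, where one must check consistency). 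So the plan is: fix $(a,b,c,d)$ with $c<n$, and in each of the three $d$-ranges show that the single inequality $\sT_r$ demands is equivalent, \emph{given that we are in that range}, to the conjunction of all three inequalities of $\sS_r$. The containment $\sS_r\subseteq\sT_r$ is trivial (a conjunction implies each conjunct), so the real content is $\sT_r\subseteq\sS_r$: in each range, the one inequality $\sT_r$ gives must be shown to force the other two.

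First I would record the elementary inequalities among the three quantities $\min(a,b)+d$, $a+b+d$, $a+b$: always $a+b \le a+b+d$, and $\min(a,b)+d \le a+b+d$; also, in terms of $\delta$, we have $\min(a,b)+d = a+b+d-\delta$ and $a+b = a+b+d-d$. Then the three cases go as follows. If $d\le\delta$: then $d\le\delta$ gives $\min(a,b)+d = (a+b+d)-\delta \le (a+b+d)-d = a+b$, so $\min(a,b)+d$ is the smallest of the three, hence $\min(a,b)+d\ge r$ forces $a+b\ge r$; and $\min(a,b)+d\ge r$ together with $d\le\delta\le a+b$ — actually more directly, $\delta\le a+b$ so... — I would argue $a+b+d\ge 2r$ from $\min(a,b)+d\ge r$ and $\max(a,b)\ge \min(a,b)$: indeed $a+b+d = \min(a,b)+\max(a,b)+d \ge (\min(a,b)+d) + \min(a,b)+ (\max(a,b)-\min(a,b)-d)\ge r + \min(a,b) \ge\ldots$ — the cleaner route is $a+b+d = (\min(a,b)+d) + \max(a,b) \ge r + \max(a,b) \ge r + \min(a,b)$, and since $d\le\delta=\max(a,b)-\min(a,b)$, $\min(a,b) = \max(a,b)-\delta \le \max(a,b)$... so one needs $\max(a,b)\ge r$, which follows because $\max(a,b)\ge \min(a,b)+d - d +\ldots$; honestly the slick statement is: $d\le\delta \Rightarrow \max(a,b) = \min(a,b)+\delta \ge \min(a,b)+d \ge r$, hence $a+b+d \ge \min(a,b)+\max(a,b) \ge \min(a,b)+r \ge r$ and in fact $\ge 2r$ once we also use $\min(a,b)+d\ge r$ with $d\le\max(a,b)$: $a+b+d\ge \min(a,b)+d + \max(a,b)\ge r + r$. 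If $\delta < d < \sigma$: then $d>\delta$ gives $\min(a,b)+d > a+b = \sigma$... wait, $\min(a,b)+d > \min(a,b)+\delta = \max(a,b)$, and $d<\sigma$ gives $a+b > d$ so $a+b+d < 2(a+b)$ hence $a+b+d\ge 2r \Rightarrow a+b\ge r+1> r$ is false in general; rather $a+b+d$ is the smallest relevant quantity here: $a+b+d \le a+b+d$ trivially, and $a+b+d < a+b + (a+b) = 2(a+b)$ so $a+b \ge \lceil (a+b+d)/2\rceil \ge \lceil 2r/2\rceil = r$ — wait that needs $a+b+d\ge 2r$, which is exactly the hypothesis — and $\min(a,b)+d > \max(a,b)$, combined with $a+b+d\ge 2r$... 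I would show $2\min(a,b)+2d = (\min(a,b)+d) + (\min(a,b)+d) > (\min(a,b)+d)+\max(a,b)$... the point is to derive both $\min(a,b)+d\ge r$ and $a+b\ge r$ from $a+b+d\ge 2r$ in this range, using $d>\delta$ (which makes $\min(a,b)+d$ large) and $d<\sigma$ (which makes $a+b$ large). If $\sigma\le d$: then $d\ge a+b$ gives $\min(a,b)+d \ge \min(a,b)+a+b\ge a+b$ and $a+b+d\ge 2(a+b)$, so $a+b$ is smallest, and $a+b\ge r$ forces the other two.

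The main obstacle — really the only one — is bookkeeping: making sure the three $d$-ranges of $\sT_r$ genuinely cover all of $\N_0$ with no gap (the endpoints $d=\delta$ and $d=\sigma$ belong to the first and third ranges respectively since the middle range is strict, so there's no gap as long as $\delta < \sigma$ or the degenerate cases are handled; when $a=b$, $\delta=0$, and when one of $a,b$ is $0$, $\delta=\sigma$, collapsing the middle range to empty — both are fine) and checking that at any shared boundary value the inequality required on the lower range and the one required on the higher range are equivalent there, so no inconsistency arises. I would organize the write-up as: (1) note $\sS_r\subseteq\sT_r$ is immediate; (2) fix $(a,b,c,d)\in\sT_r$, assume WLOG $a\ge b$ (the sets and all three inequalities are symmetric in $a,b$), so $\delta = a-b$, $\min(a,b)=b$; (3) split on $d\le a-b$, $a-b<d<a+b$, $d\ge a+b$; (4) in each range do the short chain of inequalities above to recover all three conditions of $\sS_r$. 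The whole thing is a few lines of inequality manipulation once the case structure is set up.
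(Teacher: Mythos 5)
Your proposal is correct and follows essentially the same route as the paper's proof: the inclusion $\sS_r\subseteq\sT_r$ is immediate, and for $\sT_r\subseteq\sS_r$ one reduces by symmetry in $a,b$ and splits on the three $d$-ranges determined by $\max(a,b)-\min(a,b)$ and $a+b$, in each range deriving the two remaining inequalities of $\sS_r$ from the one that $\sT_r$ supplies. The only real difference is in the middle range: your chain $2(\min(a,b)+d) > (\min(a,b)+d)+\max(a,b) = a+b+d \geq 2r$, hence $\min(a,b)+d > r$, is somewhat cleaner than the paper's argument, which resorts to a further subcase split on whether $\max(a,b) > r$ or $\max(a,b)\leq r$; everything else coincides modulo which of $a,b$ you take to be larger.
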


\begin{proof} If $(a,b,c,d)\in \sS_r$, then is it also always in $\sT_r$, hence $\sS_r\subseteq \sT_r$. 
Now suppose $(a,b,c,d)\in\sT_r$.
We may assume without loss of generality that $a\leq b$, so $\min(a,b)=a$ and $\max(a,b)=b$.

(a) If $d\leq b-a$ and consequently $a+d=\min(a,b)+d\geq r$, then $b\geq d+a\geq r$ and hence $a+b\geq r$ and $(a+d)+b\geq 2r$.
Therefore, $(a,b,c,d)\in\sS_r$.

(b) If $b-a<d<b+a$ and consequently $a+b+d\geq 2r$, then $2(a+b)>(a+b)+d\geq 2r$, so $a+b\geq r$.

(c) If $b>r$, then $\min(a,b)+d=a+d>(b-a)+a=b>r$, and if $b\leq r$, then $r-b\geq 0$ and so $\min(a,b)+d =a+d\geq 2r-b=r+(r-b)\geq r$.
Therefore, $(a,b,c,d)\in\sS_r$.

(d) Finally, if $a+b\leq d$ and consequently $a+b\geq r$, then $d\geq a+b\geq r$ and hence $\min(a,b)+d\geq d\geq r$ and $(a+b)+d\geq 2r$.

Therefore, $(a,b,c,d)\in\sS_r$.
\end{proof}

\begin{lem}\label{OrdinaryPowerBasisLemma2Aux2}
Let $r\in\N$ and $\sS_r$ and $\sT_r$ be as above.
Then $$J:=\langle x^ay^bz^cF^d | (a,b,c,d)\in\sS_r \rangle= \langle x^ay^bz^cF^d | (a,b,c,d)\in\sT_r\rangle$$ is an ideal.
\end{lem}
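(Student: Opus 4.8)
The plan is to prove directly that the $\field$-subspace $J$ is closed under multiplication by each of $x$, $y$, and $z$; since $J$ is already closed under $\field$-linear combinations and $R=\field[x,y,z]$ is generated as a $\field$-algebra by $x,y,z$, this shows $J$ is an ideal. The asserted equality $\langle x^ay^bz^cF^d\mid(a,b,c,d)\in\sS_r\rangle=\langle x^ay^bz^cF^d\mid(a,b,c,d)\in\sT_r\rangle$ is immediate from Lemma \ref{OrdinaryPowerBasisLemma2Aux1}, which gives $\sS_r=\sT_r$, so it is enough to work with the $\sS_r$-description. I would first record the monotonicity observation that if $(a,b,c,d)\in\sS_r$ and $a'\geq a$, $b'\geq b$, $d'\geq d$, $0\leq c'<n$, then $(a',b',c',d')\in\sS_r$: indeed $\min(a',b')+d'\geq\min(a,b)+d\geq r$ since $\min$ is coordinatewise nondecreasing, $a'+b'+d'\geq a+b+d\geq 2r$, and $a'+b'\geq a+b\geq r$. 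It then suffices to multiply each spanning monomial $x^ay^bz^cF^d$ with $(a,b,c,d)\in\sS_r$ by $x$, by $y$, and by $z$, and check each product lies in $J$.

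Closure under $x$ and $y$ is immediate: $x\cdot x^ay^bz^cF^d=x^{a+1}y^bz^cF^d$ is the spanning monomial corresponding to $(a+1,b,c,d)$, which lies in $\sS_r$ by the monotonicity observation, and the case of $y$ is symmetric; hence $xJ\subseteq J$ and $yJ\subseteq J$.

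The delicate point is closure under $z$, since the exponent of $z$ must stay $<n$. If $c+1<n$, then $z\cdot x^ay^bz^cF^d=x^ay^bz^{c+1}F^d$ is again a spanning monomial with the same $(a,b,d)$, hence is in $J$. If $c+1=n$, I would invoke the relation coming from the definition of $F$: expanding $\prod_{i=1}^n(z-\beta_i x)$ and $\prod_{i=1}^n(z-\A_i y)$ in powers of $z$, the identity $F=z^n-\prod_{i=1}^n(z-\beta_i x)-\prod_{i=1}^n(z-\A_i y)$ rearranges to
\[
z^n=-F+\sum_{k=0}^{n-1}z^k\bigl(u_k(x)+v_k(y)\bigr),
\]
where each $u_k$ is a scalar multiple of $x^{n-k}$ and each $v_k$ of $y^{n-k}$ (this is the same identity used in the proof of Proposition \ref{LemXSpan2}). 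Multiplying through by $x^ay^bF^d$ writes $z\cdot x^ay^bz^{n-1}F^d$ as a $\field$-linear combination of $x^ay^bF^{d+1}$ and of monomials $x^{a+n-k}y^bz^kF^d$ and $x^ay^{b+n-k}z^kF^d$ for $0\leq k\leq n-1$; each has $z$-exponent $k<n$ and a tuple $(a',b',c',d')$ with $a'\geq a$, $b'\geq b$, $d'\geq d$, so by the monotonicity observation each lies in $J$. Thus $zJ\subseteq J$, and putting the cases together $J$ is closed under multiplication by every element of $R$, i.e., $J$ is an ideal. I expect the $c+1=n$ case to be the only real obstacle: one has to use the precise form of $F$ to re-expand $z^n$, after which everything reduces to the monotonicity bookkeeping above.
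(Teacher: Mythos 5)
Your proof is correct and rests on the same two pillars as the paper's: the upward-closure of $\sS_r$ in the $a,b,d$ coordinates (with $c$ free in $\{0,\dots,n-1\}$) and the relation coming from the definition of $F$ that rewrites $z^n$ as $-F$ plus terms of $z$-degree below $n$ with extra $x$ or $y$ factors. Where you diverge is in the organization: you reduce the whole problem to checking closure under multiplication by the three algebra generators $x,y,z$, so that the only nontrivial case is multiplying once by $z$ a generator with $c=n-1$, and a single application of the $z^n$ relation immediately lands you back among spanning monomials covered by monotonicity. The paper instead first proves by induction on $c'$ that $x^ay^bz^{c+c'}F^d\in J$ for all $c'\geq 0$ (using a binomial expansion of $(L-F)^q$ with $L=F+z^n$), and only then multiplies by an arbitrary $s\in R$. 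Your version buys a genuinely tighter argument: there is no auxiliary induction on $z$-exponent and no bookkeeping of which terms in the binomial expansion have $z$-degree small enough for the induction hypothesis to apply, since everything after one substitution already has $z$-exponent $<n$. It is the same proof idea, but streamlined, and if anything it patches over some imprecision in the paper's inductive bookkeeping.
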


\begin{proof}
Note that the definition of $\sS_r$ immediately gives that
$(a,b,c,d)\in\sS_r$ implies $(a+a', b+b', c, d+d')\in\sS_r$ for all $a',
b', d'\in \N_0$, because $(a+a')+(b+b')\geq a+b\geq r$, $\min(a+a',
b+b')+(d+d')\geq \min(a,b)+d\geq r$, and $(a+a')+(b+b')+(d+d')\geq
a+b+d\geq 2r$. First we will show that $x^ay^bz^{c+c'}F^d\in J$
for all $(a,b,c,d)\in\sS_r$ and all $c'\in\N_0$.

To see this, induct on $c'$. Take $(a,b,c,d)\in\sS_r$ and let
$h=x^ay^bz^{c+c'}F^d$. If $c'<n-c$, then $(a,b,c+c',d)\in\sS_r$ and
hence $h\in J$. If $c'\geq n-c$, then $c'+c\geq n$, say $c'+c=qn+p$
with $p<n$. Assume that $x^ay^bz^{c+t}F^d\in J$ for all $t\leq
c'-1$. By definition of $F$, $L:=F+z^n$, considered as a polynomial
in $z$ with coefficients in $K[x,y]$, has degree at most $n-1$. Thus
$h=x^ay^bz^pF^d(z^n)^q=x^ay^bz^pF^d(L-F)^q=
x^ay^bz^pF^d\sum_i\eta_iL^{q-i}F^i=\sum_i\eta_ix^{a+a_i}y^{b+b_i}
z^{p+c_i}F^{d+i}$ for some $\eta_i\in K$ and $a_i, b_i, c_i\in\N_0$.
But $p+c_i<c'$ for all $i$ by choice of $L$, so by our induction
assumption $x^{a+a_i}y^{b+b_i}z^{p+c_i}F^{d+i}\in J$ for all $i$.
Hence $h\in J$.

Now take elements $j\in J$, $s\in R$, and show that $js\in J$. It
suffices to show this for a generating element $j=x^ay^bz^cF^d$ of
$J$. Let $s=\sum_i \eta_ix^{\A_i}y^{\beta_i}z^{\gamma_i}\in R$ for
some $\eta_i\in K$ and $\A_i, \beta_i, \gamma_i\in\N_0$. Then
$js=\sum_i\eta_ix^{a+\A_i}y^{b+\beta_i}z^{c+\gamma_i}F^d$. But for
all $i$, the summand
$g_i=\eta_ix^{a+\A_i}y^{b+\beta_i}z^{c+\gamma_i}F^d$ is in $J$ by
above, so $js=\sum_ig_i\in J$ as well and $J$ is an ideal.
\end{proof}

\begin{prop}\label{OrdinaryPowerBasisLemma2}
Let $r\in\N$ and $\sS_r$ and $\sT_r$ be as above.
Then $I^r=\langle x^ay^bz^cF^d | (a,b,c,d)\in\sS_r \rangle = \langle
x^ay^bz^cF^d | (a,b,c,d)\in\sT_r\rangle$.
\end{prop}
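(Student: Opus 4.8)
The plan is to prove $I^r = J$, where $J := \langle x^ay^bz^cF^d : (a,b,c,d)\in\sS_r\rangle$ is the $\field$-span in the statement; once this is done, the equality with the $\sT_r$-span is immediate from Lemma \ref{OrdinaryPowerBasisLemma2Aux1}, which gives $\sS_r = \sT_r$, and Lemma \ref{OrdinaryPowerBasisLemma2Aux2} tells us that $J$ is an ideal, a fact I will use for one of the two inclusions. For $I^r \subseteq J$: by Proposition \ref{SymbolicAsIntersection} we have $I = (xy, xF, yF)$, so $I^r$ is generated as an ideal by the products of $r$ of these three generators, i.e. by the elements $(xy)^{k_1}(xF)^{k_2}(yF)^{k_3} = x^{k_1+k_2}y^{k_1+k_3}F^{k_2+k_3}$ with $k_1 + k_2 + k_3 = r$ and $k_i \geq 0$. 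Putting $a = k_1+k_2$, $b = k_1+k_3$, $d = k_2+k_3$ and $c = 0$, a direct check shows $(a,b,c,d)\in\sS_r$: indeed $c = 0 < n$, $\min(a,b)+d = r + \min(k_2,k_3)\geq r$, $a+b+d = 2r$, and $a+b = r + k_1\geq r$. Hence every such generator of $I^r$ lies in $J$, and since $J$ is an ideal, $I^r\subseteq J$.

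For the reverse inclusion it suffices to show that each spanning element $x^ay^bz^cF^d$ with $(a,b,c,d)\in\sS_r$ lies in $I^r$. The key point is that if one can choose nonnegative integers $k_1,k_2,k_3$ with $k_1+k_2+k_3 = r$, $k_1+k_2\leq a$, $k_1+k_3\leq b$ and $k_2+k_3\leq d$, then
\[
x^ay^bz^cF^d = \left(x^{a-k_1-k_2}y^{b-k_1-k_3}z^cF^{d-k_2-k_3}\right)(xy)^{k_1}(xF)^{k_2}(yF)^{k_3},
\]
where the parenthesized factor lies in $R$ and the remaining product is a product of $r$ elements of $I$, so the whole element lies in $I^r$. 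Thus everything reduces to a feasibility question: given $c < n$, $\min(a,b)+d\geq r$, $a+b+d\geq 2r$, $a+b\geq r$ (and, assuming w.l.o.g. $a\leq b$, the first condition reads $a+d\geq r$), one must produce such a triple. I expect this to be the main obstacle — it is elementary, but it is exactly where all three inequalities defining $\sS_r$ are consumed and one must see that they suffice. Eliminating $k_3 = r-k_1-k_2$, the constraints become $k_1\geq\max(0,r-d)$, $k_2\geq\max(0,r-b)$ and $k_1+k_2\leq\min(a,r)$, so a valid triple exists precisely when $\max(0,r-d)+\max(0,r-b)\leq\min(a,r)$, which I would verify by splitting into cases according to whether $b\geq r$ and whether $d\geq r$. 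When $b\geq r$ the inequality to check becomes $r-d\leq a$, which is $a+d\geq r$; when $b<r$ and $d\geq r$ it becomes $r-b\leq a$, which is $a+b\geq r$; and when $b<r$ and $d<r$ it becomes $(r-b)+(r-d)\leq a$, which is $a+b+d\geq 2r$. In each case the needed inequality is one of the hypotheses, so a valid triple exists and $x^ay^bz^cF^d\in I^r$.

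Combining the two inclusions yields $I^r = J = \langle x^ay^bz^cF^d : (a,b,c,d)\in\sS_r\rangle$, and Lemma \ref{OrdinaryPowerBasisLemma2Aux1} identifies this span with $\langle x^ay^bz^cF^d : (a,b,c,d)\in\sT_r\rangle$, which is the claimed statement.
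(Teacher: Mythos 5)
Your proof is correct. The two inclusions match the paper's overall plan (use Lemma \ref{OrdinaryPowerBasisLemma2Aux2} for $I^r\subseteq J$ and Lemma \ref{OrdinaryPowerBasisLemma2Aux1} for the second equality), and the forward inclusion $I^r\subseteq J$ is essentially identical to the paper's. Where you genuinely diverge is the reverse inclusion $J\subseteq I^r$: the paper works with the $\sT_r$ description, splits into the four regimes $d\le b-a$; $b-a<d<b+a$ with $b>r$; $b-a<d<b+a$ with $b\le r$; and $a+b\le d$; and in each case writes down an explicit factorization of $x^ay^bF^d$ as a product of $r$ generators of $I$ times something (the third case involving the somewhat intricate expression $(xy)^{\max(r-d,0)}(xF)^{r-b}(yF)^{(b+d)-r}\cdot x^{a+b-r-\max(r-d,0)}y^{r-d-\max(r-d,0)}$). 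You instead work directly from $\sS_r$, reduce the question to whether a nonnegative integer triple $(k_1,k_2,k_3)$ with $k_1+k_2+k_3=r$, $k_1+k_2\le a$, $k_1+k_3\le b$, $k_2+k_3\le d$ exists, eliminate $k_3$, and observe that feasibility amounts to $\max(0,r-d)+\max(0,r-b)\le\min(a,r)$, which the three $\sS_r$ inequalities deliver in three clean sub-cases. This is a tidier and more conceptual argument: it explains \emph{why} those three inequalities suffice (they are exactly the feasibility conditions of a small linear system), and it sidesteps the need to discover the ad hoc factorizations. The paper's approach, by contrast, hands you the explicit product in each case and is perhaps more self-contained for a reader who doesn't want to think about feasibility, but it spends $\sT_r$ where you get away with $\sS_r$ alone. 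One small point worth spelling out in a final write-up: when you reduce to the case analysis, note that $a\le b$ means $b<r$ forces $a<r$, so $\min(a,r)=a$ in the second and third cases; you rely on this implicitly.
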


\begin{proof}
By Lemma \ref{OrdinaryPowerBasisLemma2Aux1}, the second equality is immediate.
Define $J:=\langle x^ay^bz^cF^d |
(a,b,c,d)\in\sS_r\rangle=\langle x^ay^bz^cF^d
|(a,b,c,d)\in\sT_r\rangle$ as above.

Since $I=(xy,xF,yF)$, we know $I^r$ is generated by
$$G=\M{(xy)^s(xF)^t(yF)^u}{s,t,u\in\N_0,s+t+u=r}.$$ Thus, as $J$ is an
ideal, $G\subseteq J$ implies $I^r\subseteq J$.
So take a generator $g=(xy)^s(xF)^t(yF)^u\in I^r$, i.e. pick
$s,t,u\in\N_0$ such that $s+t+u=r$. Then $u=r-s-t$ and
$g=(xy)^s(xF)^t(yF)^{r-s-t}=x^{s+t}y^{r-t}F^{r-s}$. But
$(s+t)+(r-t)=r+s\geq r$, and $(s+t)+(r-s)=r+t\geq r$ and
$(r-t)+(r-s)=r+(r-s-t)=r+u\geq r$, so $\min(s+t, r-t)+(r-s)\geq r$,
and $(s+t)+(r-t)+(r-s)=2r$, so $(a=s+t, b=r-t,c=0,d=r-s)\in\sS_r$ and
hence $g\in J$ as desired.

For the reverse containment, we take a basis element
$g=x^ay^bz^cF^d\in J$, where $(a,b,c,d)\in\sT_r$, and show that $g\in
I^r$. Without loss of generality, we may assume that $a\leq b$, and
since $g\in J$ if and only if $x^ay^bF^d\in J$ and $x^ay^bF^d\in
I^r$ implies $g\in I^r$, we may take $c$ to be 0.

(a) If $d\leq b-a$ and consequently $a+d\geq r$, then $b-d-a\geq 0$ and
we can write $g=(xy)^a(yF)^d\cdot y^{b-(d+a)}\in I^r$ since
$a+d\geq r$ and $b-a-d\geq 0$.

Now assume $b-a<d<b+a$. 

(b) If $b>r$, then we can write
$g=(xy)^a(yF)^{b-a}\cdot F^{d+a-b}\in I^r$ since
$a+(b-a)+(d+a-b)=d+a>b>r$. 

(c) If $b\leq r$, then $2(a+b)>a+b+d\geq 2r$
implies $a+b>r$ and therefore $a+b-r>0$. But it also implies
$(a+b+d)-r-d\geq 2r-r-d$, i.e. $a+b-r\geq r-d$. Therefore $a+b-r\geq
\max(r-d,0)$, so we can write
$g=(xy)^{\max(r-d,0)}(xF)^{r-b}(yF)^{(b+d)-r}\cdot
x^{a+b-r-\max(r-d,0)}y^{r-d-\max(r-d,0)}\in I^r$ since
$\max(r-d,0)+(r-b)+(b+d-r)\geq (r-d)+(r-b)+(b+d-r)\geq r$ and
$b+d-r\geq r-a\geq 0$ as $a+b+d\geq 2r$ and $r\geq b\geq a$. 

(d) Finally, assume $a+b\leq d$ and consequently $a+b\geq r$. Then
$d-a-b\geq 0$ and we can write $g=(xF)^a(yF)^b\cdot F^{d-a-b}\in
I^r$.

Therefore, we always have $g\in I^r$, and hence $J\subseteq I^r$.
\end{proof}

Combining Propositions \ref{SymPowerBasisProp} and \ref{OrdinaryPowerBasisLemma2} gives the following criterion for containment of $I^{(m)}$ in $I^r$.

\begin{lem}\label{CompleteSolution1}
For $m,r\in\N$, $I^{(m)}\nsubseteq I^r$ if and only if either $r>m$, or $r\leq m$ and there exists an element $x^ay^bF^d\in I^{(m)}$ such that $a+b+d<2r$.
\end{lem}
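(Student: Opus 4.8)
The plan is to exploit the common $\field$-basis structure established in Propositions \ref{SymPowerBasisProp} and \ref{OrdinaryPowerBasisLemma2}. Both $I^{(m)}$ and $I^r$ are $\field$-spans of subsets of the basis $\sA=\{x^ay^bz^cF^d : a,b,c,d\in\N_0,\ c<n\}$ of $R$ from Lemma \ref{RingBasis2}. Hence, by (the argument of) Lemma \ref{VSpaceBasisLemma}, a basis element $x^ay^bz^cF^d$ of $\sA$ lies in $I^r$ precisely when $(a,b,c,d)\in\sS_r$, and — more to the point — $I^{(m)}\subseteq I^r$ holds if and only if every basis element of $I^{(m)}$ lies in $I^r$; that is, if and only if every $(a,b,c,d)$ with $c<n$, $a+b\geq m$, $\min(a,b)+d\geq m$ also satisfies $(a,b,c,d)\in\sS_r$. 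Thus non-containment is witnessed by a single basis element of $I^{(m)}$ that fails one of the defining inequalities of $\sS_r$.

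First I would dispose of the case $r>m$. Here the element $x^mF^m=(xF)^m\in I^m\subseteq I^{(m)}$ has index $(m,0,0,m)$; since $a+b=m<r$, it violates the condition $a+b\geq r$ defining $\sS_r$, so $x^mF^m\notin I^r$ and $I^{(m)}\nsubseteq I^r$, consistent with the claim.

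Now suppose $r\leq m$. For the backward implication, if $x^ay^bF^d\in I^{(m)}$ with $a+b+d<2r$, then its index $(a,b,0,d)$ fails the inequality $a+b+d\geq 2r$, hence $x^ay^bF^d\notin I^r$ by Proposition \ref{OrdinaryPowerBasisLemma2}, and so $I^{(m)}\nsubseteq I^r$. For the forward implication, suppose $I^{(m)}\nsubseteq I^r$; then some basis element $x^ay^bz^cF^d\in I^{(m)}$ has $(a,b,c,d)\notin\sS_r$. Since the symbolic-power conditions of Proposition \ref{SymPowerBasisProp} give $c<n$, $a+b\geq m\geq r$, and $\min(a,b)+d\geq m\geq r$, the only defining inequality of $\sS_r$ that can fail is $a+b+d\geq 2r$, so $a+b+d<2r$. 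Replacing $c$ by $0$ leaves the membership criteria of Proposition \ref{SymPowerBasisProp} intact (they do not involve $c$), so $x^ay^bF^d\in I^{(m)}$ with $a+b+d<2r$, as required.

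The only genuinely substantive point is the first one: for ideals that are $\field$-spans of subsets of a fixed basis of $R$, ideal containment is equivalent to containment of the indexing subsets. Everything after that is a direct comparison of the explicit inequality systems from Propositions \ref{SymPowerBasisProp} and \ref{OrdinaryPowerBasisLemma2}, together with the mild bookkeeping observation that the $z$-exponent $c$ is irrelevant to both membership conditions, so that we may normalize $c=0$ to land on the form $x^ay^bF^d$ appearing in the statement. I do not anticipate any real obstacle.
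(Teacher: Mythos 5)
Your proof is correct and follows essentially the same approach as the paper: compare the explicit inequality conditions on exponents from Propositions \ref{SymPowerBasisProp} and \ref{OrdinaryPowerBasisLemma2}, using the shared $\field$-basis of $R$ to reduce ideal containment to comparison of index sets, and normalizing $c=0$. The only cosmetic difference is your choice of $x^mF^m$ as the witness for $r>m$, where the paper uses $x^{\lceil m/2\rceil}y^{\lfloor m/2\rfloor}F^{\lceil m/2\rceil}$.
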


\begin{proof}
There are elements in $I^{(m)}$ such that $\min(a,b)+d=m$, where $a,b,d\in\N_0$ are as in the description of $I^{(m)}$ in 
Proposition \ref{SymPowerBasisProp}.
For example, $g=x^{\lceil\F{m}{2}\rceil} y^{\lfloor\F{m}{2}\rfloor}F^{\lceil\F{m}{2}\rceil}$ is in $I^{(m)}$.
Therefore, Propositions \ref{SymPowerBasisProp} and \ref{OrdinaryPowerBasisLemma2} say that $I^{(m)}\nsubseteq I^r$ if either $r>m$ (e.g. $g\notin I^r$ because $\lfloor\F{m}{2}\rfloor+\lceil\F{m}{2}\rceil=m<r$ and hence the conditions that $a+b\geq r$ and $\min(a,b)+d\geq r$ are not satisfied), or $r\leq m$ and there exists an element $x^ay^bF^d\in I^{(m)}$ such that $a+b+d<2r$.

Conversely, if $I^{(m)}\nsubseteq I^r$, then Propositions \ref{SymPowerBasisProp} and \ref{OrdinaryPowerBasisLemma2} say that there is a basis element $h$ of $I^{(m)}$ that violates at least one of the conditions for $I^r$.
If either the condition $a+b\geq r$ or the condition $\min(a,b)+d\geq r$ are violated, then $h\in I^{(m)}$ means $a+b\geq m$ and $\min(a,b)+d\geq m$ and hence $r>m$.
If $r\leq m$ and hence $h$ satisfies these two conditions,
then it has to violate the third condition, and thus $a+b+d<2r$.
\end{proof}

We want to find the resurgence
$$\rho(I)=\sup_{m,r}\M{\F{m}{r}}{I^{(m)}\nsubseteq I^r}.$$
In fact, we will exhibit a condition on $m$ and $r$ that is necessary and sufficient for $I^{(m)} \subseteq I^r$.
In other words, given $m$ we will find the largest $r$ such that we have the containment $I^{(m)}\subseteq I^r$.

\begin{thm}\label{CompleteSolution2}
We have $I^{(m)}\subseteq I^r$ if and only if $4r\leq 3m+1$.
In particular, the resurgence is $\rho(I)=\F{4}{3}$.
\end{thm}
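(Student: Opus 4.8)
The plan is to use the combinatorial criterion for containment established in Lemma~\ref{CompleteSolution1}: $I^{(m)}\not\subseteq I^r$ iff either $r>m$, or $r\le m$ and there is an element $x^ay^bF^d\in I^{(m)}$ with $a+b+d<2r$. Recall from Proposition~\ref{SymPowerBasisProp} that $x^ay^bF^d\in I^{(m)}$ exactly when $a+b\ge m$ and $\min(a,b)+d\ge m$. So the whole question reduces to: for which $(m,r)$ does there exist a nonnegative integer triple $(a,b,d)$ with $a+b\ge m$, $\min(a,b)+d\ge m$, and $a+b+d\le 2r-1$? The resurgence and the containment statement both follow once we pin down this feasibility region.

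First I would minimize $a+b+d$ subject to $a+b\ge m$ and $\min(a,b)+d\ge m$. Writing $s=a+b$ and $t=\min(a,b)$, we need $s\ge m$, $t+d\ge m$, $0\le t\le s/2$ (the last because $\min(a,b)\le(a+b)/2$), and we want to minimize $s+d$. For fixed $s$, the best choice is $t=\lfloor s/2\rfloor$ and $d=\max(0,m-\lfloor s/2\rfloor)$, so the objective becomes $s+\max(0,m-\lfloor s/2\rfloor)$. If $\lfloor s/2\rfloor\ge m$ this is just $s\ge 2m$; otherwise it is $s+m-\lfloor s/2\rfloor=\lceil s/2\rceil+m$, which is increasing in $s$, so we take $s=m$, giving $\lceil m/2\rceil+m$. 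Comparing, the minimum of $a+b+d$ over all valid triples is $\min(2m,\,m+\lceil m/2\rceil)=m+\lceil m/2\rceil$, and I would double-check that the witnessing triple (essentially the element $g=x^{\lceil m/2\rceil}y^{\lfloor m/2\rfloor}F^{\lceil m/2\rceil}$ from the proof of Lemma~\ref{CompleteSolution1}) is integral and satisfies all constraints, which it does.

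Therefore $I^{(m)}\not\subseteq I^r$ (in the regime $r\le m$) iff $m+\lceil m/2\rceil\le 2r-1$, i.e. $2r\ge m+\lceil m/2\rceil+1$. I would then separate the parity of $m$: if $m=2k$ this reads $2r\ge 3k+1$, and if $m=2k+1$ it reads $2r\ge 3k+3$, i.e. $r\ge \lceil(3k+3)/2\rceil$. The cleanest route is instead to directly prove the equivalence $I^{(m)}\subseteq I^r\iff 4r\le 3m+1$: the containment holds iff $r>m$ fails and the feasibility above fails, i.e. $r\le m$ and $2r-1<m+\lceil m/2\rceil$, i.e. $2r\le m+\lceil m/2\rceil$. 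One checks that $2r\le m+\lceil m/2\rceil$ is equivalent to $4r\le 2m+2\lceil m/2\rceil = 2m+(m + (m\bmod 2))$, i.e. $4r\le 3m+(m\bmod 2)$; since $4r$ is even this is the same as $4r\le 3m+1$ when $m$ is odd and $4r\le 3m$ when $m$ is even, and in both cases $4r\le 3m+1$ is the correct sharp bound (when $m$ is even, $4r\le 3m+1$ forces $4r\le 3m$ since $3m+1$ is odd). Finally I would verify that the case $r>m$ is subsumed: if $r>m$ then $4r\ge 4m+4>3m+1$, so $4r\le 3m+1$ already implies $r\le m$, making the case split unnecessary in the final statement.

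For the resurgence, $\rho(I)=\sup\{m/r: I^{(m)}\not\subseteq I^r\}=\sup\{m/r: 4r\ge 3m+2\}$. Along the sequence $m=2k$, $r$ can be as small as $\lceil(3\cdot 2k+2)/4\rceil$; taking $m=4\ell$, $r=3\ell+1$ gives $4r=12\ell+4=3m+4\ge 3m+2$, so these pairs satisfy noncontainment and $m/r=4\ell/(3\ell+1)\to 4/3$. Conversely $4r\ge 3m+2>3m$ forces $m/r<4/3$, so $4/3$ is an upper bound, giving $\rho(I)=4/3$. The main obstacle is purely the bookkeeping with floors/ceilings and parity in converting $2r\le m+\lceil m/2\rceil$ into the uniform inequality $4r\le 3m+1$; everything else is a routine optimization over the explicit monomial basis supplied by Propositions~\ref{SymPowerBasisProp} and~\ref{OrdinaryPowerBasisLemma2}.
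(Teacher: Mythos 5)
Your proposal is correct and takes essentially the same route as the paper: both reduce to the criterion of Lemma~\ref{CompleteSolution1}, use Proposition~\ref{SymPowerBasisProp} to describe $I^{(m)}$, and show that the extremal element $x^{\lceil m/2\rceil}y^{\lfloor m/2\rfloor}F^{\lceil m/2\rceil}$ (with $a+b+d=m+\lceil m/2\rceil$) controls the containment; the paper's ``conversely'' step, summing $(a+b)+(a+d)+(b+d)\ge 3m$, is exactly your observation that $a+b+d\ge\lceil 3m/2\rceil$. Your framing as an explicit optimization over $(s,t,d)$ with the parity/ceiling bookkeeping is a slightly more systematic packaging of the same idea, and your resurgence computation via $m=4\ell$, $r=3\ell+1$ matches the paper's sequence $r=3n+1$, $m=4n$.
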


\begin{proof}
Suppose that $m,r\in\N$ are such that $I^{(m)}\subseteq I^r$.
By Lemma \ref{CompleteSolution1}, this means that $m\geq r$.
If $4r>3m+1$, we show that there exists an element $x^ay^bF^d$ in $I^{(m)}$ such that $a+b+d<2r$, which is a contradiction and hence shows that $4r\leq 3m+1$ is required.
Consider two cases, $m$ even and $m$ odd.

If $m$ is even, then $a=b=d=\F{m}{2}$ satisfies the requirements
for $I^{(m)}$ as any two of the exponents add to $m$, but $a+b+d=\F{3}{2}m<2r-\F{1}{2}<2r$, so $x^{\F{m}{2}}y^{\F{m}{2}}F^{\F{m}{2}}\in I^{(m)}\setminus I^r$.
If $m$ is odd, then $a=d=\F{m+1}{2}$ and $b=\F{m-1}{2}$ satisfy the requirements for $I^{(m)}$ as any two of the exponents add to at least $m$, but $a+b+d=\F{3}{2}m+\F{1}{2}<2r$, so $x^{\F{m+1}{2}}y^{\F{m-1}{2}}F^{\F{m+1}{2}}$ is in $I^{(m)}$ but not in $I^r$.

Conversely, suppose that $m,r\in\N$ are such that $4r\leq 3m+1$, and therefore $r\leq m$.
We will now show that then necessarily $I^{(m)}\subseteq I^r$.
Let $x^ay^bF^d\in I^{(m)}$, so $a+b\geq m$ and $\min(a,b)+d\geq m$.
By symmetry, we may assume that $a\leq b$, so $a+d\geq m\geq r$.
Then also $b+d\geq m$, and therefore $(a+b)+(a+d)+(b+d)\geq 3m$, i.e. $2(a+b+d)\geq 3m\geq 4r-1$, which means $a+b+d\geq 2r-\F{1}{2}$.
Since $a,b,d,r\in\N_0$, we get $a+b+d\geq 2r$ and therefore $I^{(m)}\subseteq I^r$ by Lemma \ref{CompleteSolution1}.

Now, since $I^{(m)}\subseteq I^r$ if and only if $4r\leq 3m+1$, we get $I^{(m)}\nsubseteq I^r$ if and only if $4r >3m+1$, i.e. $I^{(m)}\nsubseteq I^r$ if and only if $\F{4}{3}-\F{1}{3r}> \F{m}{r}$.
Thus $\rho(I)\leq \F{4}{3}$ because $\F{m}{r}\geq \F{4}{3}$ implies $I^{(m)}\subseteq I^r$.
Given $r\in \{3n+1\}_{n\in\N}$, $m=\F{4r-1}{3}-1=\F{4(r-1)}{3}\in\N$ satisfies $4r>3m+1$ and hence $I^{(m)}\nsubseteq I^r$.
But then $\F{m}{r}=\F{4}{3}\K{\F{r-1}{r}}\leq\rho(I)$ for all $r$, so
$\rho(I)\geq\F{4}{3}$. Therefore $\rho(I)=\F{4}{3}$.
\end{proof}




\section{Consequences and Applications}\label{SectionConsandApps}

We can use our methods to obtain results on factoring symbolic powers.

\subsection{Almost Collinear Configuration}

Given a homogeneous ideal $J$, $\alpha(J)$ denotes the least degree $d$ for which $J_d \not= 0$.
In other words, $\alpha(J)$ denotes the degree in which the ideal $J$ begins.

\begin{lem}\label{LemAlphas}
Let $I$ be the ideal of $n+1$ almost collinear points, where $n\geq 3$.
Then  $\alpha(I^{(m)}) = \lceil m(2n-1)/n \rceil$ and $\alpha(I^r) = 2r$.
\end{lem}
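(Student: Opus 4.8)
The plan is to read each $\alpha$ directly off the monomial-type bases already constructed. Since the basis $\B_R$ of Lemma~\ref{RingBasis} consists of homogeneous elements, and Lemmas~\ref{SymPowerBasisLem}(b) and \ref{OrdinaryPowerBasisLemma}(a) express $I^{(m)}$ and $I^r$ as the $\field$-spans of the basis elements $H_iy^jz^l$ they contain, the degree-$d$ graded piece of $I^{(m)}$ (respectively $I^r$) is nonzero precisely when some $H_iy^jz^l$ with $i+j+l=d$ lies in $I^{(m)}$ (respectively $I^r$). So each $\alpha$ is the minimum of $i+j+l$ over the explicit index set describing that ideal, and the whole proof is an integer optimization.

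For $\alpha(I^{(m)})$: by Lemma~\ref{SymPowerBasisLem}(a) I must minimize $i+j+l$ subject to $i,j,l\ge 0$, $i+j\ge m$, and $i+nl\ge mn$. Taking $(i,j,l)=(m,\,0,\,m-\lfloor m/n\rfloor)$ (which satisfies both inequalities) shows $\alpha(I^{(m)})\le 2m-\lfloor m/n\rfloor$, and one checks $2m-\lfloor m/n\rfloor=\lceil m(2n-1)/n\rceil$ since $m(2n-1)/n=2m-m/n$. For the reverse inequality, $i+nl\ge mn$ forces $l\ge m-\lfloor i/n\rfloor$, and I then split into $i\ge m$ and $i<m$. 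If $i\ge m$, then $i+j+l\ge m+(i-\lfloor i/n\rfloor)$ and the function $i\mapsto i-\lfloor i/n\rfloor$ is nondecreasing, so this is $\ge m+(m-\lfloor m/n\rfloor)$; if $i<m$, then $j\ge m-i$, so $i+j+l\ge 2m-\lfloor i/n\rfloor\ge 2m-\lfloor m/n\rfloor$. Hence $\alpha(I^{(m)})=\lceil m(2n-1)/n\rceil$.

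For $\alpha(I^r)$: by Lemma~\ref{OrdinaryPowerBasisLemma}(b) the index set is the union of three cases. The element $(i,j,l)=(r,0,r)$ lies in case~(3) and has degree $2r$ (equivalently, $(xz)^r\in I^r$), giving $\alpha(I^r)\le 2r$. For the lower bound, case~(3) forces $i+j+l\ge 2(i+j)\ge 2r$ immediately. In case~(1), using $j\ge l+1$ and $i+nl\ge rn$, treating $l\ge r$ and $l<r$ separately, gives $i+j+l\ge 2r+1$. In case~(2), writing $S=i+j+l$, the constraint $l<i+j$ gives $2l\le S-1$, while $i+j+(n-1)l\ge rn$ rewrites as $S+(n-2)l\ge rn$; eliminating $l$ yields $S\ge 2r+(n-2)/n$, so $S\ge 2r+1$ because $n\ge 3$. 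Thus $2r$ is the minimum over all three cases, so $\alpha(I^r)=2r$.

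The two upper bounds and case~(3) are routine; the real work — and the only place where the hypothesis $n\ge 3$ is used — is excluding elements of degree at most $2r-1$ in cases~(1) and (2) of Lemma~\ref{OrdinaryPowerBasisLemma}(b), i.e. the inequality manipulations sketched above. I expect that (still elementary) computation to be the main obstacle.
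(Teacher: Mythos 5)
Your proof is correct, but it takes a genuinely different route from the paper's on the harder half, the lower bound for $\alpha(I^{(m)})$. The paper establishes $\alpha(I^{(m)}) \ge \lceil m(2n-1)/n\rceil$ geometrically: it passes to the blow-up of $\P^2$ at the $n+1$ points, exhibits the nef class $F = nL - (n-1)E_0 - (E_1+\cdots+E_n)$, and intersects it with $\alpha L - mE$ to force $\alpha \ge m(2n-1)/n$. Your argument stays entirely inside the algebraic framework already built: you read $\alpha(I^{(m)})$ off the combinatorial basis of Lemma~\ref{SymPowerBasisLem} and minimize $i+j+l$ over the explicit constraint region, using the monotonicity of $i\mapsto i-\lfloor i/n\rfloor$ to handle $i\ge m$. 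That is a cleaner fit with the rest of the paper's machinery and avoids any intersection theory, at the cost of being less portable to configurations without such an explicit basis. For $\alpha(I^r)$, the paper simply invokes $\alpha(I^r)=r\,\alpha(I)$ (true for any homogeneous ideal, since a homogeneous element of $I^r$ of degree $d$ is a sum of products of $r$ homogeneous elements of $I$), while you rederive it by a three-case optimization over Lemma~\ref{OrdinaryPowerBasisLemma}(b); your version is correct but does more work than necessary. One small inaccuracy in your commentary: you say $n\ge 3$ is genuinely used in excluding low-degree elements in cases (1) and (2), but case (2) gives $S \ge 2r + (n-2)/n$, which already yields $S\ge 2r$ when $n=2$; the hypothesis $n\ge3$ is not actually essential for $\alpha(I^r)=2r$, it is inherited from the standing assumptions of the section.
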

\begin{proof}
Suppose $L_0$ is the line containing the $n$ collinear points $p_1,\cdots,p_n$, and suppose $L_i$ is a line containing $p_0$ and $p_i$ for each $1\leq i\leq n$.
Set $a = \lceil m(2n-1)/n \rceil$, and write $m = bn + r$, where $0\leq r < n$ (thus, $b = \lfloor m/n\rfloor$).
Then $a = \lceil 2m-(bn+r)/n\rceil = \lceil 2m - b-r/n\rceil = 2m-b$.
Finally, $L_0^{m-b} L_1^{b+1}\cdots L_{r}^{b+1} L_{r+1}^b \cdots L_n^b$ has degree $2m-b$ and vanishes at each $p_j$ to order at least $m$, which demonstrates that $\alpha(I^{(m)}) \leq \lceil 2m(n-1)/n\rceil$.

For an upper bound, consider $F = nL - (n-1)E_0 - E'$, where $E' = E_1 + E_2 + \cdots + E_n$. Set $E = E_0 + E'$.
Then $F$ is nef, as it meets each $L_i = L - E_0 - E_i$ and $E_0$ nonnegatively, and $F = L_1 + L_2 + \cdots + L_n + E_0$.
Thus, $F\cdot (\alpha L - mE) \geq 0$, and since $F\cdot (\alpha L - mE) = n\alpha - m(n-1)-mn \geq 0$ we see $\alpha \geq (2n-1)m/n$, and thus $\alpha = \lceil m(2n-1)/n\rceil$, which proves $\alpha(I^{(m)}) = \lceil m(2n-1)/n\rceil$.

As for the ordinary power, $\alpha(I) = 2$ immediately gives $\alpha(I^r) = 2r$.
\end{proof}

\begin{rem}
Notice that, if $A\in \field[x,y]$ and $\deg A = d$, then $A$ vanishes to order $d$ at $p_0$, as $I(p_0)^{(d)} = (x,y)^d$.
\end{rem}

\begin{prop}\label{PropSymPowersArePowers}
Given $I$ as above, $I^{(nt)} = (I^{(n)})^t$ for all $t\geq 1$.
Moreover, we have $$n = \min\setof{e}{I^{(et)} = (I^{(e)})^t\,\,\forall t\geq 1}.$$
\end{prop}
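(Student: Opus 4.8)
The plan is to establish the equality $I^{(nt)}=(I^{(n)})^t$ by proving the two inclusions separately, and then to deduce the minimality of $n$ from a degree computation. The inclusion $(I^{(n)})^t\subseteq I^{(nt)}$ is the routine half: since $I$ is the radical ideal of the points $p_0,\dots,p_n$, one has $I^{(a)}I^{(b)}=\bigl(\bigcap_iI(p_i)^a\bigr)\bigl(\bigcap_iI(p_i)^b\bigr)\subseteq\bigcap_iI(p_i)^{a+b}=I^{(a+b)}$, and iterating gives $(I^{(n)})^t\subseteq I^{(nt)}$.

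For the reverse inclusion $I^{(nt)}\subseteq(I^{(n)})^t$ I would work entirely with the $\field$-basis $\B_R$ of Lemma~\ref{RingBasis}. By Lemma~\ref{SymPowerBasisLem}(b) it suffices to show that every basis element $H_iy^jz^l\in I^{(nt)}$ lies in $(I^{(n)})^t$. Write $i=bn+a$ with $0\le a<n$ and $b=\lfloor i/n\rfloor$, so $H_iy^jz^l=x^aF^by^jz^l$; by Lemma~\ref{SymPowerBasisLem}(a), membership in $I^{(nt)}$ is equivalent to the two inequalities $bn+a+j\ge nt$ and $b+l\ge nt$. The goal is to write $H_iy^jz^l$ as a product of $t$ factors of the form $x^{a_k}F^{b_k}y^{j_k}z^{l_k}$ with each $a_k<n$; such a factor is the basis element $H_{a_k+b_kn}y^{j_k}z^{l_k}$, and by Lemma~\ref{SymPowerBasisLem}(a) it lies in $I^{(n)}$ precisely when $b_k+l_k\ge n$ and $a_k+b_kn+j_k\ge n$, the latter condition being automatic as soon as $b_k\ge1$. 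Thus the problem becomes a bookkeeping question about how to split the exponent vector $(a,b,j,l)$ into $t$ parts.

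I would split into three cases according to the size of $b$. If $b\ge nt$, then $F^{nt}=(F^n)^t$ divides $H_iy^jz^l$ and $F^n\in I^{(n)}$, so $H_iy^jz^l\in(I^{(n)})^t$ since $(I^{(n)})^t$ is an ideal. If $t\le b<nt$, choose integers $b_1,\dots,b_t$ with $1\le b_k\le n$ and $\sum_kb_k=b$; then $\sum_k(n-b_k)=nt-b\le l$, so one can pick $l_1,\dots,l_t\ge0$ with $l_k\ge n-b_k$ and $\sum_kl_k=l$, and place all of $a$ and all of $j$ in the first factor. Each factor then satisfies $b_k+l_k\ge n$ and $b_kn\ge n$, hence lies in $I^{(n)}$, and their product is $H_iy^jz^l$. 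The substantive case --- the one I expect to be the main obstacle --- is $b<t$: now at least $t-b$ of the factors must be $F$-free, and an $F$-free factor $x^{a_k}y^{j_k}z^{l_k}$ requires both $l_k\ge n$ and $a_k+j_k\ge n$; since $a_k\le a<n$, the latter forces $j_k\ge1$ in each $F$-free factor, so one must know $j$ is large. This is exactly what $bn+a+j\ge nt$ supplies: it yields $j\ge n(t-b)-a$, which is precisely the $y$-budget needed to let one $F$-free factor carry $(a_k,j_k)=(a,\,\ge n-a)$ and the remaining $t-b-1$ $F$-free factors carry $j_k\ge n$; likewise $b+l\ge nt$ gives $l\ge nt-b=(n-1)b+n(t-b)$, exactly enough to give the $b$ $F$-full factors $l_k\ge n-1$ and the $t-b$ $F$-free factors $l_k\ge n$. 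With such an allocation every factor meets the criterion of Lemma~\ref{SymPowerBasisLem}(a) for $I^{(n)}$, so $H_iy^jz^l\in(I^{(n)})^t$, completing the reverse inclusion and hence the equality $I^{(nt)}=(I^{(n)})^t$.

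For the minimality claim, the equality just proved shows $n\in\setof{e}{I^{(et)}=(I^{(e)})^t\ \forall t\geq1}$, so it remains to rule out every $e$ with $1\le e\le n-1$, and I would do this with $\alpha$ and Lemma~\ref{LemAlphas}. For such $e$ we have $0<e/n<1$, hence $\alpha(I^{(e)})=\lceil e(2n-1)/n\rceil=2e$, whereas $\alpha(I^{(en)})=\lceil en(2n-1)/n\rceil=e(2n-1)=2en-e\le 2en-1$. On the other hand $\bigl((I^{(e)})^n\bigr)_d=0$ for $d<n\,\alpha(I^{(e)})$ by degree reasons, so $\alpha\bigl((I^{(e)})^n\bigr)\ge 2en>2en-1\ge\alpha(I^{(en)})$; therefore $(I^{(e)})^n\ne I^{(en)}$ and $e$ does not belong to the set. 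Hence $n$ is its least element, as claimed.
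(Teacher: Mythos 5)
Your proof is correct. The forward inclusion $(I^{(n)})^t\subseteq I^{(nt)}$ and the minimality argument via $\alpha$ (Lemma~\ref{LemAlphas}) match the paper essentially verbatim --- the paper also notes $\alpha(I^{(et)})<\alpha\bigl((I^{(e)})^t\bigr)$ once $t\geq n/e$, and your choice $t=n$ is a special case of that. Where you genuinely depart from the paper is in the hard direction $I^{(nt)}\subseteq(I^{(n)})^t$. The paper proceeds by induction on $t$: it reduces to showing $I^{(nt)}\subseteq I^{(n)}I^{(n(t-1))}$ and then, in a three-way case split on whether $\lfloor i/n\rfloor\geq 1$, on whether $\lfloor i/n\rfloor=0$, and on whether $l\leq n-2$, peels off a single factor ($Fz^{n-1}$, $(yz)^n$, or $F^{\delta+2}z^l$ respectively) lying in $I^{(n)}$. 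You instead give a direct, non-inductive factorization of a basis element $H_iy^jz^l=x^aF^by^jz^l$ into $t$ factors simultaneously, with a case split on the size of $b=\lfloor i/n\rfloor$ relative to $t$ and $nt$, and you verify membership of each factor in $I^{(n)}$ purely by the exponent inequalities of Lemma~\ref{SymPowerBasisLem}(a). Both proofs hinge on the same key lemma, but your version makes the combinatorial content --- that the inequalities $bn+a+j\geq nt$ and $b+l\geq nt$ are precisely the budgets needed to distribute the exponents across $t$ valid factors --- explicit in a single pass, at the cost of a somewhat heavier allocation argument in the $b<t$ subcase; the paper's induction is gentler per step but has to be iterated.
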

\begin{proof}
To prove the first statement, we use induction on $t$.
When $t=1$, the statement is clear.
For $t > 1$, write $(I^{(n)})^t = I^{(n)} (I^{(n)})^{t-1} = I^{(n)} I^{(n(t-1))}$.
Thus we need only show $I^{(n)} I^{(n(t-1))} = I^{(nt)}$.
An easy geometric argument yields the forward containment, so consider the reverse.

Consider $H_i y^j z^l \in I^{(nt)}$.
By Lemma \ref{SymPowerBasisLem}, $i+ln\geq n^2 t$ and $i+j\geq nt$, where the first inequality is equivalent to $\lfloor i/n\rfloor + l \geq nt$.
Assume $\lfloor i/n\rfloor \geq 1$ and $l \geq n-1$.
Then $H_i y^j z^l = x^e F^{\lfloor i/n\rfloor} z^l = (F z^{n-1})\left(x^e F^{\lfloor i/n\rfloor -1} z^{l-n+1}\right)$.
Notice that $F z^{n-1} \in I^{(n)}$; we claim $x^e F^{\lfloor i/n\rfloor -1} z^{l-n+1} = H_{i-n} y^j z^{l-n+1} \in I^{(n(t-1))}$.
We have by hypothesis that $i+j\geq nt$ and $i+ln \geq n^2 t$.
The latter inequality is equivalent to $(i-n)+(l-(n-1))n\geq (t-1)n^2$, and subtracting $n$ from both sides of the former shows that $(i-n)+j\geq n(t-1)$; this proves the claim in the case of $\lfloor i/n \rfloor \geq 1$ and $l \geq n-1$.

Suppose now that $\lfloor i/n\rfloor = 0$.
This means that $0\leq i < n$, and thus $H_i = x^i$, so $H_i y^j z^l = x^i y^j z^l$.
We have by hypothesis that $i+j\geq nt$ and $l = \lfloor i/n\rfloor +l \geq nt$.
Therefore, we can factor $x^i y^j z^l = (yz)^n (x^i y^{j-n} z^{l-n})$, where $(yz)^n \in I^{(n)}$ and $x^i y^{j-n} z^{l-n} \in I^{(n(t-1))}$.


Finally, suppose $l \leq n-2$.
Write $l = n-2-\delta$, where $0\leq \delta\leq n-2$.
We know $i+ln \geq n^2t$, so $i\geq n^2 t - ln = n^2 t - (n-2-\delta)n = n^2 (t-1) + (\delta+2)n$, and thus $b = \lfloor i/n \rfloor \geq n(t-1)+\delta+2$.
Set $\e = b - n(t-1) -\delta-2$.
Using these constraints on $i,j,l$, we can write $H_i y^j z^l = x^e F^b y^j z^l = (x^e y^j F^{n(t-1)}) F^\e F^{\delta+2} z^l$.
Since $F$ vanishes at each point, $F^{n(t-1)}$ vanishes to order $n(t-1)$ at each point, and thus $x^e y^j F^{n(t-1)} \in I^{(n(t-1))}$.
As $l = n-2-\delta$, we see $l+\delta+2 = n$; therefore, $F^{\delta+2} z^l \in (z,F)^n$.
Additionally, $\delta+2 \geq 1$, so $F \in (x,y)^n$ and thus $F^{\delta+2} z^l \in (x,y)^n \cap (z,F)^n = I^{(n)}$.
Therefore, when $H_i y^j z^l \in I^{(nt)}$ and $l \leq n-2$, we conclude $H_i y^j z^l \in I^{(n(t-1))} I^{(n)}$, which completes the proof of the first statement.


To see the second statement, recall the computation of $\alpha(I^{(m)})$ from Lemma \ref{LemAlphas}, and assume $e < n$.
We know that $\alpha(I^{(et)}) = \lceil et(2n-1)/n\rceil = \lceil t(2e-e/n)\rceil$ and $\alpha((I^{(e)})^t) = t \lceil e(2n-1)/n\rceil = t\lceil 2e-e/n\rceil = 2et$, so that when $t \geq n/e$ we have $\alpha(I^{(et)}) < \alpha((I^{(e)})^t)$.
Thus, the ideals cannot be equal for all $t\geq 1$.
\end{proof}

\begin{rem}\label{RemarkNoetherian}
As a result of Proposition \ref{PropSymPowersArePowers}, we can conclude that the symbolic power algebra $\oplus I^{(m)}$ is Noetherian.
This is a homogeneous version of Theorem 1.3 in \cite{Schenzel}.
\end{rem}

\subsection{Nearly-Complete Intersection Configuration}

Unlike most results we present here, the following lemma about $\A$ for nearly-complete intersections is dependent on the number of points.

\begin{lem}\label{AlphaOfILem}
For $m,r\in\N$, we have $\A(I)=2$, $\A(I^r)=2r$, and $\A(I^{(m)})=
\lceil \F{3m}{2}\rceil$ if $n=1$ and $2m$ if $n\geq 2$.
\end{lem}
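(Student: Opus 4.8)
The plan is to compute each of the three quantities $\alpha(I)$, $\alpha(I^r)$, $\alpha(I^{(m)})$ separately, using the explicit monomial-times-$F^d$ bases established in Propositions \ref{SymPowerBasisProp} and \ref{OrdinaryPowerBasisLemma2}. Since $I=(xy,xF,yF)$ with $\deg F=n$, the generators have degrees $2,n+1,n+1$, so $\alpha(I)=\min\{2,n+1\}=2$ for every $n\geq 1$; and since $\alpha$ is multiplicative on products of ideals of points in $\P^2$ (or, more elementarily here, because $I^r$ is spanned by the basis elements $x^ay^bz^cF^d$ with $(a,b,c,d)\in\sS_r$, whose minimal degree $a+b+c+dn$ is minimized by $(a,b,d)=(r,r,0)$ when... ), one gets $\alpha(I^r)=2r$. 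I would justify the lower bound $\alpha(I^r)\geq 2r$ by minimizing the degree function $d(a,b,c,d)=a+b+c+dn$ over the integer region $\sS_r$: the constraints $a+b\geq r$, $\min(a,b)+d\geq r$, $a+b+d\geq 2r$ force $a+b+c+dn\geq a+b+dn\geq a+b+d\geq 2r$ when $n\geq 1$, with equality at $(r,r,0,0)$, giving $\alpha(I^r)=2r$; alternatively $(xy)^r\in I^r$ already shows $\alpha(I^r)\leq 2r$, and $\alpha(I^r)\geq r\,\alpha(I)=2r$ by Bezout-type reasoning against the nef class of a line.

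For $\alpha(I^{(m)})$ the two cases $n=1$ and $n\geq 2$ genuinely differ because the degree of $F$ enters. By Proposition \ref{SymPowerBasisProp}, $I^{(m)}$ is spanned by the $x^ay^bz^cF^d$ with $c<n$, $a+b\geq m$, $\min(a,b)+d\geq m$, so $\alpha(I^{(m)})$ is the minimum of $a+b+c+dn$ over nonnegative integers in that region. Taking $c=0$ is optimal, so we minimize $a+b+dn$ subject to $a+b\geq m$ and $\min(a,b)+d\geq m$. If $n\geq 2$: since $d\geq 0$, I'd note $a+b+dn\geq a+b\geq m$, and conversely the monomial $x^m y^0 F^0=x^m$ lies in $I^{(m)}$ (it satisfies $a+b=m\geq m$ and $\min(a,b)+d=0+0\geq m$ only if $m=0$ — so instead take $a=m,b=0$? that fails $\min+d\geq m$). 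The honest extremal choice is $a=b=d=m$ giving degree $2m+mn$, which is too big; so one must balance. Minimizing $f(a,b,d)=a+b+dn$ with $a+b\geq m$, $\min(a,b)+d\geq m$: WLOG $a\leq b$, so $a+d\geq m$; to keep $a+b$ small set $b=m-a$, needing $a\leq m-a$ i.e. $a\leq m/2$, and then $d\geq m-a$, so $f=m+(m-a)n$, decreasing in $a$, minimized at $a=\lfloor m/2\rfloor$, giving $\alpha(I^{(m)})=m+\lceil m/2\rceil n$; for $n\geq 2$ this is... $2m$ only when... Hmm, this does not obviously give $2m$, so I will need to reexamine whether $c$ can be taken positive (using $c<n$, for $n\geq 2$ one can spend up to $n-1$ on $z$) and whether the region really is as stated — the point is that $z$ has degree $1$ while $F$ has degree $n\geq 2$, so trading an $F$ for $z$'s is cheaper, which is exactly why the answer collapses to $2m$ for $n\geq 2$. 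Concretely I expect the extremal element to be something like $x^m y^m z^{\text{(something)}}F^{0}$ adjusted so that the $(z,F)$-order at $p_0$ stays $\geq m$ — and here the subtlety is that $z^c F^d$ has order $c+?$... I would instead use the geometric characterization: $\alpha(I^{(m)})$ equals the least degree of a form vanishing to order $m$ at all $2n+1$ points, and for $n\geq 2$ the curve $L_1^m\cup L_2^m$ (i.e. $x^m y^m$, degree $2m$) already vanishes to order $m$ at every $p_i$ on the two lines and to order $2m\geq m$ at $p_0$, giving $\alpha(I^{(m)})\leq 2m$; the matching lower bound $\alpha(I^{(m)})\geq 2m$ follows by intersecting a putative degree-$D$ curve with each of the $n$ lines through... no—with the two lines $L_1,L_2$ themselves: a degree $D$ form vanishing to order $m$ at $n$ points of $L_1$ and at $p_0\in L_1$ must, if it does not contain $L_1$, have $D\geq (n+1)m$, which for $n\geq 1$ forces containing $L_1$ when $D<(n+1)m$, similarly $L_2$, so $D\geq 2m$ once $n\geq 2$ (two distinct lines stripped off), while for $n=1$ only one point sits on each line so the Bezout bound is weaker and one gets the $\lceil 3m/2\rceil$ from the smooth-conic-type analysis of Bocci–Harbourne. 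So the clean proof is: (i) $\alpha\leq 2m$ via $x^my^m$ for $n\ge2$; (ii) $\alpha\ge 2m$ for $n\ge2$ by successively factoring out $L_1$ then $L_2$ using the high multiplicity forced along each line; (iii) for $n=1$, reduce to the known computation for three general-enough points / a smooth conic as in \cite{BocciHarbourne1}, or compute directly from the basis of Proposition \ref{SymPowerBasisProp} with $n=1$ (where $c<1$ forces $c=0$), minimizing $a+b+d$ with $a+b\ge m$, $\min(a,b)+d\ge m$, which gives $\lceil 3m/2\rceil$ by the balancing argument above.

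The main obstacle I anticipate is the lower bound $\alpha(I^{(m)})\ge 2m$ for $n\ge 2$: the slick "strip off $L_1$, then $L_2$" argument needs care because after removing $L_1^{k}$ the residual curve still has to satisfy multiplicity conditions along $L_2$ and at $p_0$, and one must track the multiplicities through the factorization; the cleanest route is probably not geometric at all but purely combinatorial — minimize the degree $\delta(a,b,c,d)=a+b+c+dn$ over the explicit region $\{c<n,\ a+b\ge m,\ \min(a,b)+d\ge m\}$ of Proposition \ref{SymPowerBasisProp}, observing that for $n\ge 2$ one always has $\delta\ge a+b\ge m$ is too weak, so instead use both constraints: WLOG $a\le b$; then $a+d\ge m$ and $a+b\ge m$ give $2\delta\ge (a+b)+(a+b)+2dn\ge (a+b)+(a+d)+(b+d)\cdot\tfrac{? }{?}$... the correct inequality is $\delta=a+b+c+dn\ge a+b+dn\ge a+b+2d\ge (a+d)+(b+d)\ge m+m=2m$ using $n\ge 2$ and $b+d\ge a+d\ge m$; this is clean and gives $\alpha(I^{(m)})\ge 2m$ directly, with equality at $(a,b,c,d)=(m,m,0,0)$. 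For $n=1$ the same bound reads $\delta=a+b+d\ge$ ? and the extremum is at $a=\lceil m/2\rceil$, $b=\lfloor m/2\rfloor$, $d=\lceil m/2\rceil$ giving $\lceil 3m/2\rceil$, matching the claim. So the final proof is short once one commits to the combinatorial optimization over the bases already in hand; the only real work is checking the $n=1$ extremal value, which is the balancing computation $\min\{a+b+d : a+b\ge m,\ \min(a,b)+d\ge m\}=\lceil 3m/2\rceil$.
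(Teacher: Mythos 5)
Your final argument is correct and follows essentially the same strategy as the paper: compute $\alpha$ by minimizing the degree $a+b+c+dn$ over the explicit basis regions from Propositions \ref{SymPowerBasisProp} and \ref{OrdinaryPowerBasisLemma2}, with $c=0$ optimal, and exhibit the witnesses $(xy)^r$, $x^m y^m$ (for $n\geq 2$), and $x^{\lceil m/2\rceil}y^{\lceil m/2\rceil}F^{\lfloor m/2\rfloor}$ (for $n=1$). Your lower-bound chain $a+b+c+dn\geq a+b+2d=(a+d)+(b+d)\geq 2m$ (valid since $n\geq 2$ and, WLOG $a\leq b$, both $a+d\geq m$ and $b+d\geq a+d\geq m$) is a slightly cleaner single inequality than the paper's two-case split on $d\geq m$ versus $d\leq m$; the rest, including the $n=1$ balancing to $\lceil 3m/2\rceil$ via $2(a+b+d)\geq 3m$, matches the paper. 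The writeup itself meanders through several false starts (e.g.\ trying $a=b=d=m$, or an unfinished B\'ezout argument), but the argument you ultimately commit to is sound.
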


\begin{proof}
Since $I=(xy,xF,yF)$ and $F$ has degree $n\geq 1$, we immediately
get $\A(I)=2$. Moreover, $I^r$ has generators of degree $r\A(I)$ and no generator of lesser degree,
hence $\A(I^r)=r\A(I)$ so $\A(I^r)=2r$.

To find $\A(I^{(m)})$, recall that $x^my^m\in I^{(m)}$ for any
$n\geq 1$, so $\A(I^{(m)})\leq 2m$. Also, for $n=1$, we have
$x^{\lceil\F{m}{2}\rceil}
y^{\lceil\F{m}{2}\rceil}F^{\lfloor\F{m}{2}\rfloor}=x^{\lceil\F{m}{2}\rceil}
y^{\lceil\F{m}{2}\rceil}z^{\lfloor\F{m}{2}\rfloor}\in I^{(m)}$,
which has degree $\lceil\F{3m}{2}\rceil\leq 2m$. Therefore,
$\A(I^{(m)})\leq 2m$ if $n\geq 2$ and $\A(I^{(m)})\leq
\lceil\F{3m}{2}\rceil$ if $n=1$. We will show that we also have the
other inequalities, i.e. $\A(I^{(m)})\geq 2m$ if $n\geq 2$ and
$\A(I^{(m)})\geq \lceil\F{3m}{2}\rceil$ if $n=1$. 

Consider an element $g=x^ay^bF^d\in I^{(m)}$. If $n=1$, then
$a+b\geq m$ and $\min(a,b)+d\geq m$ imply that $2(a+b+d)\geq 3m$, so
$\deg(g)= a+b+nd=a+b+d\geq \lceil\F{3m}{2}\rceil$, hence
$\A(I^{(m)})\geq \lceil\F{3m}{2}\rceil$ as $g$ was arbitrary. If
$n\geq 2$, then $\deg g=a+b+nd\geq 2m$ if $d\geq m$,
so we may assume that $d\leq m$. Then $a,b\geq m-d\geq 0$
as $\min(a,b)+d\geq m$. Thus $\deg(g)=a+b+nd\geq
(m-d)+(m-d)+nd=2m+(n-2)d\geq 2m$ and hence $\A(I^{(m)})\geq 2m$ as
$g$ was arbitrary.
\end{proof}

We also get a neat description for writing symbolic powers of ideals defining nearly-complete intersections as ordinary powers.


\begin{thm}\label{SymPowerSplitEven}
Let $\A, \beta\in\N$ with at least one of $\alpha,\beta$ even. Then, if $I$
defines a nearly-complete intersection, we have
$I^{(\A+\beta)}=I^{(\A)}I^{(\beta)}$.
\end{thm}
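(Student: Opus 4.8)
The plan is to use the explicit monomial-type bases for symbolic powers established in Proposition~\ref{SymPowerBasisProp}. Recall that $I^{(m)}=\langle x^ay^bz^cF^d : a,b,c,d\in\N_0,\ c<n,\ a+b\geq m,\ \min(a,b)+d\geq m\rangle$. The forward containment $I^{(\A)}I^{(\beta)}\subseteq I^{(\A+\beta)}$ is automatic (and follows from the general fact that $I^{(s)}I^{(t)}\subseteq I^{(s+t)}$, or from a direct check on generators, since if $x^{a_1}y^{b_1}z^{c_1}F^{d_1}\in I^{(\A)}$ and $x^{a_2}y^{b_2}z^{c_2}F^{d_2}\in I^{(\beta)}$ then the exponents of the product clearly satisfy the inequalities for $I^{(\A+\beta)}$ after possibly rewriting $z^{c_1+c_2}$ in terms of the basis $\sA$ via Proposition~\ref{LemXSpan2}, which only increases the $F$-exponent and keeps $a+b$ fixed). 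So the real content is the reverse containment $I^{(\A+\beta)}\subseteq I^{(\A)}I^{(\beta)}$.

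For the reverse containment, by Proposition~\ref{SymPowerBasisProp} it suffices to take a basis element $g=x^ay^bz^cF^d$ with $c<n$, $a+b\geq \A+\beta$, and $\min(a,b)+d\geq \A+\beta$, and exhibit a factorization $g=g_1g_2$ with $g_1\in I^{(\A)}$ and $g_2\in I^{(\beta)}$. Since the $z$-power $z^c$ can be absorbed into either factor without affecting the relevant inequalities (the conditions only involve $a,b,d$), I would assume $c=0$ and write $g=x^ay^bF^d$. By symmetry assume $a\leq b$, so the hypotheses become $a+b\geq \A+\beta$ and $a+d\geq \A+\beta$. The goal is to split the triple $(a,b,d)$ as $(a_1,b_1,d_1)+(a_2,b_2,d_2)$ in $\N_0^3$ so that $a_i+b_i\geq$ (the appropriate one of $\A,\beta$) and $\min(a_i,b_i)+d_i\geq$ (the appropriate one of $\A,\beta$). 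This is where the parity hypothesis enters: WLOG say $\A$ is even, $\A=2\gamma$. A natural attempt is to first peel off a factor in $I^{(\A)}$ of the shape $x^\gamma y^\gamma F^\gamma$ (which lies in $I^{(\A)}$ since any two of $\gamma,\gamma,\gamma$ sum to $2\gamma=\A$), provided $a\geq\gamma$, $b\geq\gamma$, and $d\geq\gamma$; one then checks that the remaining factor $x^{a-\gamma}y^{b-\gamma}F^{d-\gamma}$ lies in $I^{(\beta)}$, using $(a-\gamma)+(b-\gamma)=a+b-\A\geq\beta$ and $(a-\gamma)+(d-\gamma)=a+d-\A\geq\beta$ and $b\geq a$. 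When one of $a,b,d$ is smaller than $\gamma$, I would handle the degenerate cases separately: if $a<\gamma$ then $d\geq\A+\beta-a>\beta+\gamma$ is large, and if $d<\gamma$ then $a\geq\A+\beta-d>\beta+\gamma$ and $b\geq\A+\beta-a$—in each such case a different explicit split (peeling off a pure power like $F^{\A}$, or $(xy)^{?}$, or $(yF)^{?}$, adjusted to have even exponent $\A$) does the job; these mirror the case analysis already used in the proof of Proposition~\ref{OrdinaryPowerBasisLemma2}.

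I expect the main obstacle to be the bookkeeping in the degenerate cases—verifying in each sub-case that the chosen split genuinely lands both factors in the correct symbolic powers, and confirming that the parity hypothesis is exactly what is needed (so that one really can extract a "balanced" factor with even total degree $\A$, rather than being forced into an odd split that loses the $+1$ slack). The even-exponent requirement is what guarantees the factor $x^\gamma y^\gamma F^\gamma$ meets each pairwise-sum threshold with equality rather than deficit; without it the naive halving would leave each factor one short of a threshold, exactly as the $\rho(I)=\tfrac43$ computation in Theorem~\ref{CompleteSolution2} reflects. Once the balanced-factor extraction is set up and the finitely many boundary cases are dispatched by the same style of explicit monomial manipulation used throughout Section~\ref{SSecNCI}, the result follows.
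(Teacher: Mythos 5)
Your overall strategy is the same as the paper's: reduce to a basis element $g=x^ay^bF^d$ (taking $c=0$, WLOG $a\leq b$), peel off a factor in $I^{(\A)}$, and check the remainder lands in $I^{(\beta)}$. Your main case — extract $x^{\A/2}y^{\A/2}F^{\A/2}$ when $a,b,d\geq \A/2$ — is exactly case (a)(i) of the paper and is verified correctly. The forward containment $I^{(\A)}I^{(\beta)}\subseteq I^{(\A+\beta)}$ is also handled the same way.

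However, the degenerate cases are where the real work is, and your sketch of them contains a concrete error. You suggest peeling off ``a pure power like $F^{\A}$, or $(xy)^{?}$, or $(yF)^{?}$.'' None of these can serve as the $I^{(\A)}$ factor: $F$ does not vanish at $p_0$ (indeed $F([0,0,1])=-1$, and $F\notin(x,y)$), so $F^{\A}\notin I^{(\A)}$; and $(xy)^k$ and $(yF)^k$ each fail one of the two inequalities $a+b\geq\A$, $\min(a,b)+d\geq\A$ for every $k$. The factors the paper actually needs in the boundary cases are the more finely balanced ones $x^{\A-d}y^{\A-d}F^{d}$ (when $a\geq\A/2$ but $d<\A/2$) and $x^{a}y^{\A-a}F^{\A-a}$ (when $a<\A/2$); these are in $I^{(\A)}$ precisely because both pairwise sums hit $\A$, but they are not pure powers of $F$, $xy$, or $yF$, nor even necessarily products of the ring generators $xy,xF,yF$ of $I$. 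Your appeal to ``mirroring the case analysis of Proposition~\ref{OrdinaryPowerBasisLemma2}'' is also off target: that proposition produces factorizations into the generators of $I$ (needed for membership in the \emph{ordinary} power $I^r$), whereas here the factor only needs to lie in the symbolic power $I^{(\A)}$, whose basis is strictly larger. So as written the proof has a genuine gap: the main case is right, but the boundary-case extractions you propose would not go through and need to be replaced by the balanced factors above.
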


\begin{proof}
Assume without loss of generality that $\A$ is even. Let $g\in I^{(\A+\beta)}$.
In order to show $I^{(\A+\beta)}\subseteq I^{(\A)}I^{(\beta)}$, it is enough to show that all (ideal) generators of $I^{(\A+\beta)}$  
are in $I^{(\A)}I^{(\beta)}$, so by our description for $I^{(\A+ 
\beta)}$ from Proposition \ref{SymPowerBasisProp}, we may assume that $g=x^ay^bF^d$, where $a,b,d, s, t\in\N_0$ are such that $a 
\leq b$, $a+b=\A+\beta+s$, and $\min(a,b)+d=a+d=\A+\beta+t$.
We will consider two cases, (a) that $a\geq\F{\A}{2}$ and (b) that $a<\F{\A}{2}$. For the first case, we have the subcases (i) that $d \geq\F{\A}{2}$ and (ii) that $d<\F{\A}{2}$.

\begin{enumerate}
\item[(a)] Assume that $b\geq a\geq \F{\A}{2}$.

\begin{enumerate}

\item[(i)] Suppose we have $d\geq \F{\A}{2}$. Notice that $g$ can be written as $$g=
\underbrace{x^{\F{\A}{2}}y^{\F{\A}{2}}F^{\F{\A}{2}}}_{\in I^{(\A)}} 
\cdot \underbrace{x^{a-\F{\A}{2}}y^{b-\F{\A}{2}}F^{d-\F{\A}{2}}}_  {\in I^{(\beta)}}$$ where the first factor is in $I^{(\A)}$ because  
any two of the exponents add to $\A$, and the second factor is in  
$I^{(\beta)}$ because $(a-\F{\A}{2})+(b-\F{\A}{2})=a+b-\A=\beta+s$  
and $(\min(a,b)-\F{\A}{2})+(d-\F{\A}{2})=\beta+t$.

\item[(ii)] If $b\geq a\geq \F{\A}{2}$ but $d<\F{\A}{2}$, then $b\geq a\geq
\A-d$ as $b+d\geq a+d=\A+\beta+t\geq \A$. Also, $a-(\A-d)=\beta+t$  
and $b-(\A-d)=\beta+t+v$ for some
$v\in\N_0$. Then we can write $g=\underbrace{x^{\A-d}y^{\A-d}F^d}_ 
{\in I^{(\A)}}\cdot \underbrace{x^{\beta+t}y^{\beta+t+v}}_{\in I^ 
{\beta}}$ where the first factor is in $I^{(\A)}$ because $(\A-d)+d= 
\A$ and $2\A-2d>\A$, and the second factor is in $I^{(\beta)}$  
because $t,v\in\N_0$.

\end{enumerate}

\item[(b)] For the second case, assume that $a<\F{\A}{2}$. Then we have $a 
+b\geq \A>2a$ and $a+d\geq \A>2a$, so
$b\geq \A-a>a$ and $d\geq \A-a>a$.
Also, $b-(\A-a)=\beta+s$ and $d-(\A-a)=\beta+t$.
Therefore we can write $g=\underbrace{x^ay^{\A-a}F^{\A-a}}_{\in I^{(\A)}}\cdot \underbrace{y^{\beta+s}F^{\beta+t}}_{\in I^{(\beta)}}$  
where the first factor is in $I^{(\A)}$ because $a+(\A-a)=\A$ and $2 
(\A-a)>\A$, and the second factor is in $I^{(\beta)}$ because $s,t \in\N_0$.
Therefore we get $I^{(\A+\beta)}\subseteq I^{(\A)}I^{(\beta)}$.
\end{enumerate}

For the other direction, note that it suffices to take two (ideal)
generators $g\in I^{(\A)}$ and $h\in I^{(\beta)}$, and show that
$gh\in I^{(\A+\beta)}$. Again using \ref{SymPowerBasisProp}, we may  
assume that $g=x^{a_1}y^{b_1}F^{d_1}$ and $h=x^{a_2}y^{b_2}F^{d_2}$, where for $i=1,2$, we have $a_i, b_i, d_i, s_i, t_i\in\N_0$ such that
$a_1+b_1=\A+s_1$, $a_2+b_2=\beta+s_2$, $\min(a_1, b_1)+d_1=\A+t_1$,
and $\min(a_2, b_2)+d_2=\beta+t_2$. We immediately obtain
$(a_1+a_2)+(b_1+b_2)=\A+\beta+(s_1+s_2)$ and $\min(a_1+a_2,
b_1+b_2)+(d_1+d_2)\geq \min(a_1, b_1)+\min(a_2,
b_2)+(d_1+d_2)=\A+\beta+(t_1+t_2)$, so
$gh=x^{a_1+a_2}y^{b_1+b_2}F^{d_1+d_2}\in I^{(\A)}I^{(\beta)}$ as
desired.
\end{proof}

\begin{cor}\label{CorSymPowersArePowers2}
Let $r,s,t\in\N$. Then $I^{(2st)}=\K{I^{(2s)}}^t$ and
$I^{((2s+r)t)}=I^{(2st)}I^{(rt)}$.
\end{cor}

\begin{proof}
Both equations follow immediately from Theorem \ref{SymPowerSplitEven}.
\end{proof}

Also note that, as a result of Corollary \ref{CorSymPowersArePowers2}, $I^{(2t)} = (I^{(2)})^t$ and, as with Proposition \ref{PropSymPowersArePowers}, we can conclude that the symbolic power algebra $\oplus I^{(m)}$ is Noetherian.
We now refine our results on factoring symbolic powers of ideals of nearly-complete intersections.

\begin{lem}\label{SymPowerSplitAux}
Let $m\in\N$ be odd. Then $I^{(m)}I=\langle x^ay^bz^cF^d
| a,b,c,d\in\N_0, c<n, a+b\geq m+1, \min(a,b)+d\geq m+1, a+b+d\geq
\F{3m+1}{2}+2 \rangle$.
\end{lem}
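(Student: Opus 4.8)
The statement describes the product $I^{(m)}I$ (for $m$ odd) as the $\field$-span of monomials $x^ay^bz^cF^d$ satisfying three numerical conditions: the two "symbolic power of degree $m+1$" conditions $a+b\geq m+1$ and $\min(a,b)+d\geq m+1$, together with the extra constraint $a+b+d\geq \tfrac{3m+1}{2}+2$. I would prove this by the same two-sided containment strategy used throughout Section~\ref{SSecNCI}: show each generator of $I^{(m)}I$ lies in the claimed span, and conversely show each monomial in the claimed span is a product of an element of $I^{(m)}$ and an element of $I$. The numerical bookkeeping is entirely parallel to Propositions~\ref{SymPowerBasisProp} and~\ref{OrdinaryPowerBasisLemma2}, and the key external input is Theorem~\ref{CompleteSolution2} (or rather the arithmetic behind it), which is what forces the $\tfrac{3m+1}{2}+2$ bound to be exactly right when $m$ is odd.

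\textbf{Forward containment.}
First I would take a generator $gh$ with $g=x^{a_1}y^{b_1}z^{c_1}F^{d_1}\in I^{(m)}$ (so $a_1+b_1\geq m$, $\min(a_1,b_1)+d_1\geq m$ by Proposition~\ref{SymPowerBasisProp}) and $h$ one of the three monomial generators $xy, xF, yF$ of $I$. Reducing $z$-powers mod $n$ as in Lemma~\ref{OrdinaryPowerBasisLemma2Aux2}, I may assume $c<n$ in the product. In each of the three cases for $h$ I would check that the resulting exponents satisfy $a+b\geq m+1$ and $\min(a,b)+d\geq m+1$ — this is immediate since multiplying by $xy$, $xF$, or $yF$ raises $a+b$ by at least $1$ and raises $\min(a,b)+d$ by at least $1$. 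The only subtle point is the third inequality $a+b+d\geq\tfrac{3m+1}{2}+2$: from $g\in I^{(m)}$ the argument in Theorem~\ref{CompleteSolution2} gives $a_1+b_1+d_1\geq\tfrac{3m+1}{2}$ (using that $2(a_1+b_1+d_1)\geq 3m$ and integrality with $m$ odd), and multiplying by any of $xy$, $xF$, $yF$ adds exactly $2$ to $a+b+d$. So the forward direction reduces to this integrality observation plus three routine case checks.

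\textbf{Reverse containment and the main obstacle.}
For the reverse, I would take a monomial $w=x^ay^bz^cF^d$ satisfying all three conditions and exhibit a factorization $w = g\cdot h$ with $g\in I^{(m)}$ and $h\in I=(xy,xF,yF)$. As in the proof of Proposition~\ref{OrdinaryPowerBasisLemma2}, I may assume $a\leq b$ and (absorbing $z$-powers via Lemma~\ref{OrdinaryPowerBasisLemma2Aux2}) that $c=0$. The plan is to peel off one generator of $I$ — $xy$, $xF$, or $yF$ — chosen so that the remaining monomial still satisfies the defining inequalities for $I^{(m)}$, i.e. $a'+b'\geq m$ and $\min(a',b')+d'\geq m$. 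I expect the main obstacle to be precisely this choice: one must split into cases according to the relative sizes of $a$, $b-a$, $d$ (mirroring cases (a)--(d) in Proposition~\ref{OrdinaryPowerBasisLemma2} and in Theorem~\ref{SymPowerSplitEven}), and in the borderline regime where $a+b$ and $\min(a,b)+d$ are both close to $m+1$ one needs the third hypothesis $a+b+d\geq\tfrac{3m+1}{2}+2$ to guarantee that there is enough "room" in the $F$-exponent (or the $y$-exponent) to remove a generator of $I$ without breaking an $I^{(m)}$-inequality. Verifying that the three hypotheses together are exactly sufficient — neither more nor less — in every case is the delicate part; everything else is the now-familiar monomial-basis accounting. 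Once both containments are established, Lemma~\ref{VSpaceBasisLemma} together with linear independence of the $x^ay^bz^cF^d$ (Lemma~\ref{RingBasis2}) identifies $I^{(m)}I$ with the span of exactly the listed monomials.
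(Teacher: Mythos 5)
Your proposal matches the paper's argument closely: the forward inclusion rests on exactly the observation that a basis element of $I^{(m)}$ satisfies $a_1+b_1+d_1\geq\frac{3m}{2}$ and hence $\geq\frac{3m+1}{2}$ by integrality since $m$ is odd, while multiplying by a generator of $I$ adds at least $2$ to $a+b+d$; and the reverse inclusion is carried out in the paper by the same peel-off-a-generator strategy you outline, with the case split organized by the slacks $s=a+b-(m+1)$, $t=a+d-(m+1)$, $u=b+d-(m+1)$ (so the third hypothesis becomes $s+t+u\geq 2$), determining which of $xy$, $xF$, $yF$ can be removed. The only minor imprecision is in the forward direction: the paper multiplies by a vector-space basis element of $I$ (so the products span $I^{(m)}I$ directly) rather than by the three ideal generators $xy,xF,yF$, which would require first knowing the target span is an ideal; this is a small bookkeeping difference, not a different route.
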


\begin{proof}
Let $\sS:=\langle x^ay^bz^cF^d | a,b,c,d\in\N_0, c<n, a+b\geq m+1,
\min(a,b)+d\geq m+1, a+b+d\geq \F{3m+1}{2}+2 \rangle$. By Proposition 
\ref{SymPowerBasisProp},
$I^{(m)}I$ is generated by elements of the form
$g=\underbrace{x^{a_1}y^{b_1}z^{c_1}F^{d_1}}_{\in I^{(m)}}\cdot
\underbrace{x^{a_2}y^{b_2}z^{c_2}F^{d_2}}_{\in I}$, where for
$i=1,2$, $a_i,b_i,c_i,d_i\in\N_0$ are such that $c_1<n$, $c_2<n$,
$a_1+b_1\geq m$, $\min(a_1,b_1)+d_1\geq m$, $a_2+b_2\geq 1$,
$\min(a_2,b_2)+d_2\geq 1$, and $a_2+b_2+d_2\geq 2$. Note that also
$(a_1+b_1)+(a_1+d_1)+(b_1+d_1)=2(a_1+b_1+d_1)\geq 3m$, so
$a_1+b_1+d_1\geq \F{3m}{2}$. But $m$ is odd, so we actually have
$a_1+b_1+d_1\geq \F{3m+1}{2}$.

Thus any generator $g$ for $I^{(m)}I$ is of the form
$x^{a_1+a_2}y^{b_1+b_2}z^{c_1+c_2}F^{d_1+d_2}$ where
$(a_1+a_2)+(b_1+b_2)\geq m+1$, $\min(a_1+a_2, b_1+b_2)+(d_1+d_2)\geq
\min(a_1,b_1)+\min(a_2,b_2)+(d_1+d_2)\geq m+1$,
$(a_1+a_2)+(b_1+b_2)+(d_1+d_2)\geq \F{3m+1}{2}+2$, and, by Proposition 
\ref{LemXSpan2}, we may assume $c_1+c_2<n$. Therefore, $g\in\sS$ and
hence $I^{(m)}I\subseteq \sS$.

For the other containment, let $g=x^ay^bz^cF^d\in\sS$, where we may
assume that $c=0$. Let $s,t,u\in\N_0$ be such that $a+b=m+1+s$,
$a+d=m+1+t$, and $b+d=m+1+u$. Then
$a+b+d=\F{3m+3}{2}+\F{s+t+u}{2}\geq \F{3m+1}{2}+2$ implies that
$\F{s+t+u}{2}\geq 1$, and $a+b=m+1+s$ implies that $a\geq 1$ or
$b\geq 1$. 

Say $a\geq 1$.
If $d=0$, then $a-1=m+t$ and $b-1=m+u$,
so $g=\underbrace{x^{a-1}y^{b-1}}_{\in
I^{(m)}}\cdot\underbrace{xy}_{\in I}\in I^{(m)}I$.  

So suppose that $d\geq 1$.

(a) If $s\geq 1$, we get $(a-1)+d=m+t$, $(b-1)+d=m+u$,
and $(a-1)+(b-1)=m+(s-1)\geq m$, so
$g=\underbrace{x^{a-1}y^{b-1}F^d}_{\in I^{(m)}}\underbrace{xy}_{\in
I}\in I^{(m)}I$.

(b) If $s=0$ and $u,t\geq 1$, then $(a-1)+b=m$,
$(a-1)+(d-1)=m+(t-1)\geq m$, and $(b-1)+(d-1)=m+(u-1)\geq m$, so
$g=\underbrace{x^{a-1}y^bF^{d-1}}_{\in I^{(m)}}\underbrace{xF}_{\in
I}\in I^{(m)}I$.

(c) Finally, suppose $s=0$ and either of $u$ or $t$ is
also zero, say $t=0$. Then $u\geq 2$ as $s+t+u\geq 2$, and $b=d\geq
1$. Thus we get $a+(b-1)=m$, $a+(d-1)=m$, and
$(b-1)+(d-1)=m+(u-2)\geq m$, so
$g=\underbrace{x^ay^{b-1}F^{d-1}}_{\in I^{(m)}}\underbrace{yF}_{\in
I}\in I^{(m)}I$.

Therefore $g\in I^{(m)}I$ and hence $\sS\subseteq I^{(m)}I$.
\end{proof}

The condition that $\A$ or $\beta$ be even in Theorem \ref{SymPowerSplitEven} is necessary, as we see in Theorem \ref{SymPowerSplitOdd}.

\begin{thm}\label{SymPowerSplitOdd}
Let $\A,\beta\in\N$ both be odd. Then
$I^{(\A+\beta)}\supsetneq I^{(\A)}I^{(\beta)}$.
\end{thm}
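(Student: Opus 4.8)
The plan is to exhibit an explicit generator of $I^{(\A+\beta)}$ that fails to lie in $I^{(\A)}I^{(\beta)}$, using the monomial-type bases for symbolic powers (Proposition \ref{SymPowerBasisProp}) together with the sharpened description of $I^{(m)}I$ from Lemma \ref{SymPowerSplitAux}. First I would observe that, since $\A$ and $\beta$ are both odd, $\A+\beta$ is even, and the natural "balanced" element $g = x^{(\A+\beta)/2} y^{(\A+\beta)/2} F^{(\A+\beta)/2}$ lies in $I^{(\A+\beta)}$ (any two of its exponents sum to $\A+\beta$, so it satisfies the conditions in Proposition \ref{SymPowerBasisProp}). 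Its total degree (counting $F$ with weight $1$, i.e.\ in the bookkeeping used throughout Section \ref{SSecNCI}) is $a+b+d = \tfrac{3(\A+\beta)}{2}$. I would then compare this with the minimum achievable value of $a+b+d$ over generators of $I^{(\A)}I^{(\beta)}$.

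The key computation is the following. Any generator of $I^{(\A)}I^{(\beta)}$ is a product $x^{a_1}y^{b_1}z^{c_1}F^{d_1}\cdot x^{a_2}y^{b_2}z^{c_2}F^{d_2}$ with the first factor in $I^{(\A)}$ and the second in $I^{(\beta)}$. As noted in the proof of Lemma \ref{SymPowerSplitAux}, for a generator of $I^{(m)}$ with $m$ odd one has $a+b+d\geq \tfrac{3m+1}{2}$ (because $2(a+b+d)\geq 3m$ forces $a+b+d\geq \lceil 3m/2\rceil = (3m+1)/2$). Applying this to both factors, every generator of $I^{(\A)}I^{(\beta)}$ satisfies
\[
a+b+d \;\geq\; \frac{3\A+1}{2} + \frac{3\beta+1}{2} \;=\; \frac{3(\A+\beta)}{2} + 1.
\]
Hence every element of $I^{(\A)}I^{(\beta)}$ (being an $R$-linear combination of such generators, and multiplication by $R$ only raises the degree) has its lowest-degree part in degree at least $\tfrac{3(\A+\beta)}{2}+1$, whereas $g$ has degree exactly $\tfrac{3(\A+\beta)}{2}$. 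Therefore $g\notin I^{(\A)}I^{(\beta)}$, while $g\in I^{(\A+\beta)}$; combined with the forward containment $I^{(\A)}I^{(\beta)}\subseteq I^{(\A+\beta)}$ (which is immediate, or follows as in Theorem \ref{SymPowerSplitEven}), this gives the strict inclusion.

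To make this airtight I would phrase the degree argument via $\alpha$: Lemma \ref{AlphaOfILem} (or rather its proof) shows $g$ realizes $\alpha(I^{(\A+\beta)}) = \lceil 3(\A+\beta)/2\rceil = 3(\A+\beta)/2$ when $n=1$, but the real content here is the lower bound on degrees in the product ideal, which holds for all $n\geq 1$ and is exactly the displayed inequality above. The main obstacle — really the only subtle point — is justifying that no cancellation among the $\sA$-basis elements can produce something of lower degree in $I^{(\A)}I^{(\beta)}$; this is handled by homogeneity together with the fact (from Lemma \ref{RingBasis2}) that the $x^ay^bz^cF^d$ with $c<n$ form a $\field$-basis of $R$ graded by total degree $a+b+c+dn$, so every homogeneous component of a product of generators already has degree $\geq \tfrac{3(\A+\beta)}{2}+1$. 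Once that is noted, the proof is complete.
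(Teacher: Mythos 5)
Your choice of witness $g = x^{(\A+\beta)/2}y^{(\A+\beta)/2}F^{(\A+\beta)/2}$ is exactly the element the paper uses, and the overall strategy — exhibit a basis element of $I^{(\A+\beta)}$ missing from $I^{(\A)}I^{(\beta)}$ — is the same. But your route to showing $g\notin I^{(\A)}I^{(\beta)}$ has a real gap.

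You conflate two different quantities: the actual degree $\deg(x^ay^bz^cF^d)=a+b+c+dn$ and the ``weight'' $a+b+d$. The inequality $a+b+d\geq\frac{3m+1}{2}$ for odd $m$ is a bound on the weight, not the degree. When you say ``every homogeneous component of a product of generators already has degree $\geq\tfrac{3(\A+\beta)}{2}+1$'' and invoke Lemma \ref{RingBasis2}'s grading by $a+b+c+dn$, you are implicitly making a degree argument, and that argument separates $g$ from $I^{(\A)}I^{(\beta)}$ only when $n=1$: for $n\geq 2$, $\deg g = \frac{(n+2)(\A+\beta)}{2}\geq 2(\A+\beta)=\alpha(I^{(\A)}I^{(\beta)})$, so $g$ lives in degrees where $I^{(\A)}I^{(\beta)}$ is nonzero (e.g.\ $x^{\A+\beta}y^{\A+\beta}\in I^{(\A)}I^{(\beta)}$ already has degree $2(\A+\beta)$). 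You even acknowledge the $\alpha$ computation only applies when $n=1$, but then assert ``the real content\ldots holds for all $n\geq 1$'' without supplying it.

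To make your approach rigorous, you would need to prove that the weight $a+b+d$ is submultiplicative after re-expansion in the basis $\sA$ — equivalently, that $I^{(m)}\subseteq (x,y,F)^{\lceil 3m/2\rceil}$ and that the filtration by powers of $(x,y,F)$ is well-behaved, so that $I^{(\A)}I^{(\beta)}\subseteq(x,y,F)^{\frac{3(\A+\beta)}{2}+1}$ while $g\notin (x,y,F)^{\frac{3(\A+\beta)}{2}+1}$. The subtle point is what happens when a product $x^{a_1}y^{b_1}z^{c_1}F^{d_1}\cdot x^{a_2}y^{b_2}z^{c_2}F^{d_2}$ has $c_1+c_2\geq n$: one must use $z^n = (F+z^n) - F$, where $F+z^n$ has $z$-degree $<n$, to re-expand, and check that the weight can only increase (essentially because every monomial of $F+z^n$ other than the $z^{n}$ leading term carries an $x$ or a $y$). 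You never make this observation. The paper sidesteps the entire issue by first rewriting $I^{(\A)}I^{(\beta)}=I^{(\A+\beta-1)}I$ via Corollary \ref{CorSymPowersArePowers2} and then applying Lemma \ref{SymPowerSplitAux}, which gives an exact basis characterization of $I^{(m)}I$ for odd $m$ with the weight bound built in; that is why the paper proves Lemma \ref{SymPowerSplitAux} first. Your alternative could be made to work, but the missing submultiplicativity lemma is precisely the nontrivial content, and it is absent.
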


\begin{proof}
By Corollary \ref{CorSymPowersArePowers2}, for any odd number $m\in\N$, we have $I^{(m)}=I^{(m-1+1)}=I^{(m-1)}I^{(1)}=I^{(m-1)}I$ since $m-1$ is even.
Notice that the parity of $\A$ was irrelevant for the second part of the proof for Theorem \ref{SymPowerSplitEven}, so we still have $I^{(\A+\beta)}\supseteq I^{(\A)}I^{(\beta)}$.
If $\A=2k+1$ and $\beta=2l+1$ for some $k,l\in\N_0$, then $I^{(\A)}=\K{I^{(2)}}^kI$ and $I^{(\beta)}=\K{I^{(2)}}^lI$ by Corollary \ref{CorSymPowersArePowers2}.
Then $\A+\beta-2$ is even and thus $I^{(\A)}I^{(\beta)}=\K{I^{(2)}}^{k+l}I^2=I^{2k+2l}I^2= \K{I^{\A+\beta-2}I}I=I^{(\A+\beta-1)}I$.
However, $\A+\beta-1$ is odd, so we cannot simplify $I^{(\A+\beta-1)}I$ any further.
By Lemma \ref{SymPowerSplitAux}, $I^{(\A+\beta-1)}I=\langle x^ay^bz^cF^d | a,b,c,d\in\N_0, c<n, a+b\geq \A+\beta, \min(a,b)+d\geq \A+\beta, a+b+d\geq \F{3(\A+\beta)}{2}+1 \rangle$.
But $g=x^{\F{\A+\beta}{2}}y^{\F{\A+\beta}{2}}F^{\F{\A+\beta}{2}}\in I^{(\A+\beta)}$ and $3\K{\F{\A+\beta}{2}}<\F{3(\A+\beta)}{2}+1$, so $g$ does not satisfy the condition that $a+b+d\geq \F{3(\A+\beta)}{2}+1$ and therefore $g$ is contained in $I^{(\A+\beta)}$ but not in $I^{(\A)}I^{(\beta)}$.
\end{proof}

\begin{cor}\label{SymPowerSplitComplete}
For $m\in\N$, we have $I^{(m)}=\K{I^{(2)}}^{\F{m}{2}}$
if $m$ is even, and $I^{(m)}=\K{I^{(2)}}^{\F{m-1}{2}}I$ if $m$ is
odd.
\end{cor}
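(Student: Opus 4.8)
The plan is to deduce both formulas directly from Corollary \ref{CorSymPowersArePowers2} (and, if one prefers, Theorem \ref{SymPowerSplitEven}), together with the trivial fact that $I^{(1)}=I$; essentially all the work has already been done, and what remains is to package the two cited statements and check the small boundary cases.

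For the even case, write $m=2s$ with $s\in\N$. Apply Corollary \ref{CorSymPowersArePowers2} with parameter $1$ in place of $s$ there and with exponent $t=s=\F{m}{2}$: this yields $I^{(m)}=I^{(2\cdot 1\cdot s)}=\K{I^{(2)}}^{s}=\K{I^{(2)}}^{\F{m}{2}}$, as desired. For the odd case, suppose $m=2s+1$. If $m\geq 3$, then $s\in\N$ and Corollary \ref{CorSymPowersArePowers2} (with $r=1$ and $t=1$, so that $2s+r=m$) gives $I^{(m)}=I^{(2s)}I^{(1)}=I^{(2s)}I$; alternatively one applies Theorem \ref{SymPowerSplitEven} to the splitting $m=(m-1)+1$ with $\A=m-1$ even and $\beta=1$. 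By the even case already established, $I^{(2s)}=\K{I^{(2)}}^{s}=\K{I^{(2)}}^{\F{m-1}{2}}$, whence $I^{(m)}=\K{I^{(2)}}^{\F{m-1}{2}}I$.

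It then remains only to note the edge value $m=1$, for which the asserted identity reads $I^{(1)}=\K{I^{(2)}}^{0}I=I$, which holds by definition of the empty product and of the first symbolic power. (The value $m=2$ needs no comment, the formula there being $I^{(2)}=\K{I^{(2)}}^{1}$.) Since every $m\in\N$ is covered by one of these cases, the corollary follows; I do not anticipate any genuine obstacle, as the statement is a direct repackaging of Theorem \ref{SymPowerSplitEven} and Corollary \ref{CorSymPowersArePowers2}, the only minor care being the bookkeeping of the boundary value $m=1$.
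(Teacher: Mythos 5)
Your proof is correct and follows essentially the same route as the paper's one-line proof, namely deriving both cases from Corollary \ref{CorSymPowersArePowers2}. The only difference is bookkeeping: the paper also cites Theorem \ref{SymPowerSplitOdd} for the odd case, but as your argument shows, the needed identity $I^{(2s+1)}=I^{(2s)}I$ is already a direct instance of Corollary \ref{CorSymPowersArePowers2} (with $r=t=1$), so that extra reference is superfluous; your explicit handling of $m=1$ via the empty product is also a fine touch that the paper glosses over.
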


\begin{proof}
The even case follows by Corollary \ref{CorSymPowersArePowers2} and the odd case
by Corollary \ref{CorSymPowersArePowers2} and Theorem \ref{SymPowerSplitOdd}.
\end{proof}


\subsection{Common Results}

This section contains applications of the results of Section \ref{SecMainResults} which are true for both almost collinear points and nearly-complete intersections.
In particular, we verify two conjectures of \cite{BCH}. 

\begin{thm}
If $I$ defines almost collinear points or a nearly-complete
intersection, then $I^{2r} \subseteq \sM^r I^r$ and $I^{(2r-1)} \subseteq
\sM^{r-1} I^r$, where $\sM = (x,y,z)$ is the irrelevant maximal
ideal.
\end{thm}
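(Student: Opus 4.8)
The plan is to prove the two containments $I^{2r}\subseteq \sM^r I^r$ and $I^{(2r-1)}\subseteq \sM^{r-1}I^r$ separately, in each case exploiting the explicit monomial-type bases we have built for $I^r$ and $I^{(m)}$. The key observation is that $\sM = (x,y,z)$ and $\sM^k$ is spanned by all monomials of degree $\geq k$, so a product like $b\cdot g$, where $b$ is a basis element of $I^r$ of degree $d$ and $g$ is a monomial of degree $\geq r$ (resp. $\geq r-1$), automatically lands in $\sM^r I^r$ (resp. $\sM^{r-1}I^r$). Thus both statements reduce to: every basis element of $I^{2r}$ (resp. $I^{(2r-1)}$) can be written as a monomial of sufficiently high degree times a basis element of $I^r$.

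\textbf{The first containment.} For $I^{2r}\subseteq \sM^r I^r$, I would take a basis element $b$ of $I^{2r}$ — of the form $H_i y^j z^l$ in the almost collinear case (Lemma~\ref{OrdinaryPowerBasisLemma}) or $x^a y^b z^c F^d$ in the nearly-complete intersection case (Proposition~\ref{OrdinaryPowerBasisLemma2}) — and use the fact, established in the \textbf{part (a)} statements of those results, that $b$ is literally a product of $2r$ elements of $I$. Group those $2r$ factors into two blocks of $r$ factors each: the product of the first block is a basis element of $I^r$ (after re-expanding in the basis, it is a single basis element by linear independence), and the product of the second block is an element of $I^r$, hence in particular of $\sM^r$ since each of the $r$ factors lies in $\sM = (x,y,z)$ (every generator of $I$ is a form of positive degree, so $I\subseteq\sM$). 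This gives $b\in I^r\cdot\sM^r$.

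\textbf{The second containment.} For $I^{(2r-1)}\subseteq \sM^{r-1}I^r$, the argument is the delicate one, since symbolic powers need not factor. Here I would argue degree-by-degree: a basis element $b$ of $I^{(2r-1)}$ has $\deg b = d \geq \alpha(I^{(2r-1)})$, and the lemmas computing $\alpha$ (Lemma~\ref{LemAlphas}, Lemma~\ref{AlphaOfILem}) give $\alpha(I^{(2r-1)})$ explicitly — of order roughly $\tfrac{3}{2}(2r-1)$ in the collinear case with $n$ large, and at least $2(2r-1)$ or $\lceil 3(2r-1)/2\rceil$ for nearly-complete intersections — in all cases comfortably at least $2r + (r-1) = 3r-1$ for $r\geq 2$ (the case $r=1$ being trivial since $\sM^0 I^1 = I = I^{(1)}$). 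The plan is then to show $b$ factors as $b = g\cdot b'$ with $b'$ a basis element of $I^r$ and $g$ a monomial of degree $\geq r-1$. Concretely, one uses the explicit containment/basis descriptions: a symbolic-power basis element satisfies the numerical inequalities of Lemma~\ref{SymPowerBasisLem} (resp. Proposition~\ref{SymPowerBasisProp}) for $m = 2r-1$; I would check that these inequalities, being roughly twice the corresponding inequalities for $I^r$, force the existence of a sub-factor satisfying the $I^r$-conditions of Lemma~\ref{OrdinaryPowerBasisLemma} (resp. Proposition~\ref{OrdinaryPowerBasisLemma2}), with enough ``slack'' left over — at least $r-1$ in total degree — to absorb into $\sM^{r-1}$; this slack is exactly what the $\alpha$ computation guarantees.

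\textbf{Main obstacle.} The routine part is the first containment and the $r=1$ reductions. The genuine work is the second containment: unlike $I^{2r}$, a basis element of $I^{(2r-1)}$ is \emph{not} a priori a product of elements of $I$, so one cannot just ``split the factors.'' One must instead carefully exhibit, from the numerical data $(i,j,l)$ or $(a,b,c,d)$ satisfying the symbolic-power inequalities, an explicit factorization into an $I^r$-basis element times a degree-$(r-1)$ monomial; the bookkeeping here — especially navigating the case split in Lemma~\ref{OrdinaryPowerBasisLemma}(b) between the three regimes $l<j$, $j\le l<i+j$, $i+j\le l$, and likewise the four-way split of Proposition~\ref{OrdinaryPowerBasisLemma2} — is where the proof will be longest and where a careless estimate would break. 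The $\alpha$-bounds are the right tool to control it: they certify that the total degree exceeds the minimum needed for an $I^r$ element by at least $r-1$, which is precisely the amount $\sM^{r-1}$ must supply.
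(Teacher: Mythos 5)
There is a genuine gap, and it starts with a typo you took at face value. The printed statement says $I^{2r}\subseteq\sM^rI^r$, but the paper's own proof treats $I^{(2r)}$, and the results being verified are symbolic-power conjectures of \cite{BCH}; the intended statement is $I^{(2r)}\subseteq\sM^rI^r$. Under the literal reading your first-containment argument is correct but vacuous: since $I\subseteq\sM$, one has $I^{2r}=I^r\cdot I^r\subseteq\sM^rI^r$ with no basis analysis needed. More importantly, the argument does not transfer to $I^{(2r)}$: your ``group the $2r$ factors into two blocks of $r$'' step relies on the part (a) statements of Lemma~\ref{OrdinaryPowerBasisLemma} and Proposition~\ref{OrdinaryPowerBasisLemma2}, which say that a basis element of an \emph{ordinary} power $I^r$ factors as a product of $r$ elements of $I$. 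Basis elements of $I^{(2r)}$ have no such factorization in general (this is exactly why $\rho(I)>1$), so the heart of the intended first containment is untouched.

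For the second containment, and for the corrected first one, you correctly identify that the work is to turn the numerical inequalities into an explicit factorization, but the sketch stops there and the $\alpha$-bound heuristic is not sufficient by itself: knowing $\deg b\geq 3r-1$ does not by itself produce a factorization $b=g\cdot b'$ with $b'\in I^r$ and $\deg g\geq r-1$, because the $I^r$-membership conditions constrain more than total degree. In the almost collinear case the paper proceeds as you suggest in spirit, but with a case split on $l\geq r$ versus $l<r$ (for $I^{(2r-1)}$; analogously for $I^{(2r)}$), exhibiting in each case an explicit subfactor in $I^r$ (either $z^rG$ with $G$ a product of $r$ linear forms in $x,y$ dividing $H_iy^j$, or $F^r$) and then checking the leftover degree is $\geq r-1$ (resp.\ $\geq r$). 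For the nearly-complete intersection case your plan misses a much slicker route the paper uses: one verifies the single base case $I^{(2)}\subseteq\sM I$ by direct comparison of generators, and then invokes the structural identities $I^{(2r)}=(I^{(2)})^r$ and $I^{(2r-1)}=(I^{(2)})^{r-1}I$ from Corollaries~\ref{CorSymPowersArePowers2} and~\ref{SymPowerSplitComplete}, giving $I^{(2r)}\subseteq(\sM I)^r=\sM^rI^r$ and $I^{(2r-1)}\subseteq(\sM I)^{r-1}I=\sM^{r-1}I^r$ with no inequality bookkeeping at all. (Note the paper cannot do the same in the almost collinear case precisely because Proposition~\ref{PropSymPowersArePowers} shows $I^{(2t)}\neq(I^{(2)})^t$ there.) So: fix the target statement to $I^{(2r)}$, replace the ``split $2r$ factors'' argument with the factorization-and-degree argument you already have in mind for $I^{(2r-1)}$, and for the nearly-complete intersection case, use the $(I^{(2)})^r$ reduction rather than the basis inequalities.
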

\begin{proof}
\textbf{Nearly-complete intersection: } If $I$ defines a nearly-complete intersection, notice that
$I^{(2)}=(x^2y^2, x^2F^2, xyF, y^2F^2)$ and $$\sM
I=(x,y,z)(xy,xF,yF)=(x^2y, x^2F, xyF, xy^2, y^2F, xyz, xFz,yFz),$$
and thus $I^{(2)}\subseteq \sM I$. Then Corollaries
\ref{CorSymPowersArePowers2} and  \ref{SymPowerSplitComplete} give
$I^{(2r)}=\K{I^{(2)}}^r\subseteq \K{\sM I}^r=\sM^rI^r$ and
$I^{(2r-1)}=\K{I^{(2)}}^{r-1}I\subseteq \K{\sM
I}^{r-1}I=\sM^{r-1}I^r$.

\textbf{Almost collinear points: } Now assume that $I$ defines $n+1$ almost collinear points, and consider $H_i y^j z^l\in I^{(2r)}$.
By Lemma \ref{SymPowerBasisLem}, this means that $i+j\geq 2r$ and $i+ln\geq 2rn$ (equivalently, $\lfloor i/n\rfloor + l \geq 2r$).
Suppose $l \geq r$.
Then we may write $H_i y^j z^l = z^r G_1 G_2$, where $G_1$ is a form in $x$ and $y$ of degree $r$ dividing $H_i y^j$ (such a form exists since $i+j\geq 2r$ and $H_i y^j = x^e L_1^a \cdots L_n^a y^j$, where the $L$'s are linear factors, $F = L_1 \cdots L_n$, and $i = an+e$), and $G_2 = H_i y^j z^l/z^r G_1$.
Since $I = (xz,yz,F)$, it follows that $z^r G_1 \in I^r$, and since $i+j\geq 2r$, $G_2$ has degree at least $r$, and hence $G_2 \in \sM^r$.
Therefore, $H_i y^j z^l \in \sM^r I^r$.

Suppose now that $l < r$; then there is an integer $\delta$ with $l = r -\delta$ satisfying $1\leq\delta\leq r$.
Then $2nr \leq i + nl = i + n(r-\delta) = i+nr - n\delta$, and subtracting $nr$ gives $nr \leq i - n\delta$.
Thus, $n(r+\delta)\leq i$ and hence there is an integer $\e$ such that $i = n(r+\delta) + \e$.
Then $H_i y^j z^l = F^r H_{n\delta+\e} y^j z^l = F^r H_{n\delta+\e} y^j z^{r-\delta}$.
Since $F^r \in I^r$ and $\deg (H_{n\delta+\e} y^j z^{r-\delta}) = n\delta+\e+j+r-\delta = (n-1)\delta + \e + j+ r \geq r$ (since each of the summands is nonnegative), we have that $H_{n\delta+\e} y^j z^{r-\delta} \in \sM^r$.
Therefore, $I^{(2r)} \subseteq \sM^r I^r$.

Now we consider the other containment for the ideal defining almost collinear points.
As before, if $H_i y^j z^l \in I^{(2r-1)}$, Lemma \ref{SymPowerBasisLem} implies that $i+j\geq 2r-1$ and $i+ln \geq (2r-1)n$.
If $l\geq r$, then $H_i y^j z^l = z^r z^{l-r} H_i y^j$.
Note that we can factor $H_i y^j = L_1 \cdots L_{i+j}$ as a product of linear factors, and that each linear factor is a polynomial in $\field[x,y]$.
Since $i+j\geq 2r-1$, we can collect $r$ of these linear factors and call their product $G = L_1 \cdots L_r$.
Therefore, we can write $H_i y^j z^l = (z^r G) z^{l-r} L_{r+1} \cdots L_{i+j}$; since $z^r G \in I^r$ and $l-r+i+j -r \geq l-2r+2r-1 = l-1 \geq r-1$, we find that $H_i y^j z^l \in \sM^{r-1} I^r$.

Suppose now that $r > l$, i.e., there is an integer $\delta$ such that $l = r-1 - \delta$ and $0\leq \delta \leq r-1$.
Then $(2r-1)n \leq i + ln = i+(r-1-\delta)n$, so $(r+\delta)n\leq i$.
Thus, there is an integer $\e$ such that $i = (r+\delta)n + \e$.
We can therefore write $H_i y^j z^l = F^r H_{\delta n+\e} y^j z^l$, and since $F\in I$, we know $F^r \in I^r$.
Since $H_{\delta n + \e} y^j z^{r-1-\delta}$ has degree $\delta n + \e + j + r - 1 - \delta = \delta (n-1) + \e + j + r-1 \geq r-1$, it follows that $H_{\delta n + \e} y^j z^{r-1-\delta} \in \sM^{r-1}$.
Thus, we conclude $I^{(2r-1)} \subseteq \sM^{r-1} I^r$.
\end{proof}

\begin{thm}
Let $\sM = (x,y,z)\subseteq \field[x,y,z]$, where $\field$ is a
field of characteristic 0. If $I$ defines almost collinear points or
a nearly-complete intersection, then $I^{(t(m+1))} \subseteq \sM^t
(I^{(m)})^t$ and $I^{(t(m+1)-1)} \subseteq \sM^{t-1} (I^{(m)})^t$.
\end{thm}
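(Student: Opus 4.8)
The plan is to view both containments as uniform versions of the preceding theorem (which is exactly the case $m=1$, since $I^{(1)}=I$) and to treat the two configurations separately. In each case the target is the same: for a spanning family of generators $g$ of $I^{(t(m+1))}$ (resp.\ $I^{(t(m+1)-1)}$), exhibit a factorization $g=g_1\cdots g_t\cdot Q$ with $g_1,\dots,g_t\in I^{(m)}$ and $Q\in\sM^t$ (resp.\ $Q\in\sM^{t-1}$).

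For a nearly-complete intersection I would argue purely algebraically, with no new work on bases. From the generators computed in the proof of the preceding theorem one has $I^{(2)}\subseteq\sM I$, hence $(I^{(2)})^k\subseteq\sM^kI^k$ for all $k$; and by Lemma \ref{AlphaOfILem}, $I^{(s)}\subseteq\sM^{2s}\subseteq\sM^s$. Then I would split on the parity of $m$. When $m$ is even, Corollary \ref{SymPowerSplitComplete} gives $(I^{(m)})^t=I^{(tm)}$, and since $tm$ is even Theorem \ref{SymPowerSplitEven} factors $I^{(t(m+1))}=I^{(tm)}I^{(t)}=(I^{(m)})^tI^{(t)}$ and $I^{(t(m+1)-1)}=(I^{(m)})^tI^{(t-1)}$; the claims then follow from $I^{(t)}\subseteq\sM^t$ and $I^{(t-1)}\subseteq\sM^{t-1}$. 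When $m$ is odd, $m+1$ is even, so $I^{(t(m+1))}=(I^{(2)})^{t(m+1)/2}$ and $I^{(t(m+1)-1)}=(I^{(2)})^{t(m+1)/2-1}I$ by Corollary \ref{SymPowerSplitComplete}, while $(I^{(m)})^t=(I^{(2)})^{t(m-1)/2}I^t$; cancelling the common factor $(I^{(2)})^{t(m-1)/2}$ reduces both claims to $(I^{(2)})^t\subseteq\sM^tI^t$ and $(I^{(2)})^{t-1}\subseteq\sM^{t-1}I^{t-1}$, which have already been noted.

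For almost collinear points I would work with the basis $\{H_iy^jz^l\}$ and Lemma \ref{SymPowerBasisLem}: with $i=an+e$, $0\le e<n$, one has $H_iy^jz^l\in I^{(\mu)}$ iff $a+l\ge\mu$ and $i+j\ge\mu$. Fix a generator $g=H_iy^jz^l\in I^{(t(m+1))}$, so $a+l\ge t(m+1)$ and $i+j\ge t(m+1)$ (equivalently $i+ln\ge t(m+1)n$), and split on the size of $l$. If $l\le t$, then $a\ge tm$, so $F^{tm}\mid H_i$ and $g=F^{tm}\cdot H_{i-tmn}y^jz^l$ with $F^{tm}=(F^m)^t\in(I^{(m)})^t$; the inequality $i+ln\ge t(m+1)n$ forces the cofactor to have degree $\ge(t-l)n+l\ge t$, so it lies in $\sM^t$. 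If $l\ge tm$, peel off $z^{tm}D$ where $D$ is a product of $tm$ of the $\ge t(m+1)$ linear factors of $x^eF^ay^j$; then $z^{tm}D=(z^mD_1)\cdots(z^mD_t)\in(I^{(m)})^t$ (each $z^mD_k\in I^{(m)}$ since $D_k\in(x,y)^m$), and the cofactor has degree $(i+j+l)-2tm\ge t$. In the remaining regime $t<l<tm$ (which forces $m\ge2$) I would set $\delta=tm-l\ge1$, use that $a\ge t+\delta$ and hence $i\ge(t+\delta)n$, and build the $t$ factors as monomials $F^{a_k}z^{c_k}D_k$ with $a_k+c_k=m$ and $a_kn+\deg D_k\ge m$ (so each lies in $I^{(m)}$): distribute the $\delta$ copies of $F$ and the $l$ copies of $z$ with $\sum a_k=\delta$, $\sum c_k=l$, and reserve further linear factors of $x^eF^{a-\delta}y^j$ to meet the order-$m$ condition at $p_0$ group by group. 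Because these $g$ span $I^{(t(m+1))}$ by Lemma \ref{SymPowerBasisLem}(b), this gives the containment; the second containment comes from the same analysis with $t-1$ in place of $t$ in the degree demands.

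The step I expect to be the real obstacle is the middle regime $t<l<tm$ just described: verifying that the $\delta$ copies of $F$, the $l$ copies of $z$, and the reserved linear factors really do assemble into $t$ elements of $I^{(m)}$ while leaving a cofactor of degree $\ge t$ (resp.\ $\ge t-1$). The difficulty is integrality—$\lfloor i/n\rfloor$ copies of $F$ must be parcelled out among $t$ groups so that each attains order $m$ at $p_0$, and an even split leaves a ceiling-sized deficit that has to be covered by extra linear factors at the cost of cofactor degree. Making the bookkeeping close requires using \emph{both} defining inequalities of $I^{(t(m+1))}$—when $l$ is not large it is the inequality $i+ln\ge t(m+1)n$, i.e.\ $i\ge(t+\delta)n$, rather than $i+j\ge t(m+1)$, that supplies the copies of $F$ needed—and choosing the grouping that concentrates the $F$'s instead of spreading them.
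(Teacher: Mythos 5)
Your handling of the nearly-complete intersection half is correct, and it is a genuinely different (and arguably cleaner) route than the paper's. The paper simply cites Lemma 2.4 and Proposition 2.3 of \cite{BCH}, feeding in Corollary \ref{CorSymPowersArePowers2}; you instead inline the argument using Corollary \ref{SymPowerSplitComplete} and Theorem \ref{SymPowerSplitEven} together with the two ingredients $I^{(2)}\subseteq \sM I$ and $I^{(s)}\subseteq \sM^{s}$. Your parity split is sound: for $m$ even you factor $I^{(t(m+1))}=I^{(tm)}I^{(t)}=(I^{(m)})^t I^{(t)}$ and $I^{(t(m+1)-1)}=(I^{(m)})^t I^{(t-1)}$; for $m$ odd you reduce everything to $(I^{(2)})^t\subseteq \sM^t I^t$ and $(I^{(2)})^{t-1}\subseteq\sM^{t-1}I^{t-1}$. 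What this buys you is a self-contained proof that makes the dependence on the factoring theorems explicit, instead of delegating to an external reference. (One small caveat: the chain $I^{(s)}\subseteq\sM^{2s}\subseteq\sM^s$ uses $\alpha(I^{(s)})=2s$, which by Lemma \ref{AlphaOfILem} only holds for $n\ge 2$; for $n=1$ you should say $\alpha(I^{(s)})=\lceil 3s/2\rceil\ge s$ directly. The final inclusion $I^{(s)}\subseteq\sM^s$ that you actually need is fine in all cases.)

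For almost collinear points, your framework and the two extreme regimes match the paper's proof, but the middle regime $t<l<tm$ --- which you yourself flag as the obstacle --- is where the argument is incomplete and where your stated heuristic is suspect. You propose forcing $a_k+c_k=m$, $\sum a_k=\delta=tm-l$, $\sum c_k=l$, then padding with linear factors $D_k$ so that $a_kn+\deg D_k\ge m$, and you suggest ``concentrating the $F$'s.'' Two issues. First, since $a\mapsto \max(0,m-an)$ is convex, Jensen says $\sum\max(0,m-a_kn)$ is \emph{minimized} by spreading the $a_k$ as evenly as possible and \emph{maximized} by concentrating them, so concentration works against the budget $\sum\deg D_k\le i+j-\delta n-t$. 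Second, the constraint $\sum a_k=\delta$ (equivalently, putting all $l$ copies of $z$ into the $I^{(m)}$ factors) is itself an unnecessary handicap: it caps how many copies of $F$ you may use. The paper's decomposition is structurally different here. It writes $st\le l<(s+1)t$ for some $1\le s\le m-1$ and takes the \emph{uniform} choice $a_k=m-s$, $c_k=s$, which uses $t(m-s)\ge\delta$ copies of $F$ (more than your budget allows) and only $st\le l$ copies of $z$, letting the cofactor absorb the leftover $z^{l-st}$; when $n(m-s)<m$ each factor is padded by a block $Q_d$ of exactly $m-n(m-s)$ linear forms. Because $a_k$ is constant, the bookkeeping closes cleanly and the residual form needed for $\sM^t$ ($y^t$ when $j\ge t$, or $Ay^j$ with $\deg A=i-mt\ge t-j$ when $j<t$) is immediate from $i+j\ge t(m+1)$. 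Your plan can be repaired (an even split of $a_k$, paired with the sharper inequality $i\ge(t+\delta)n$, does make the estimate close, with equality in the boundary cases), but as written --- constraint $\sum a_k=\delta$ plus ``concentrate'' --- the middle case would not go through, and you acknowledge you have not verified it. So the almost collinear half has a genuine gap precisely where you expected one.
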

\begin{proof}
\textbf{Nearly-complete intersection: } By Corollary
\ref{CorSymPowersArePowers2} (first with $s=1$, then with $r=t=1$),
Lemma 2.4 and Proposition 2.3 of \cite{BCH} apply to give the
results for nearly-complete intersections.

\textbf{Almost collinear points: } For ideals $I$ defining almost collinear points, Proposition \ref{PropSymPowersArePowers} demonstrates that $I^{(2j)} \not= (I^{(2)})^j$ for all $j\geq 1$, so Proposition 2.3 of \cite{BCH} does not apply.
Instead, we use the $\field$-basis developed above, and first consider the containment $I^{(t(m+1))} \subseteq \sM^t (I^{(m)})^t$.

We wish to factor a basis element $H_i y^j z^l \in I^{(t(m+1))}$ into a product of a form of degree $t$ and a product of $t$ forms, each of which vanishes to order $m$ on the set of $n+1$ points.
The symbolic power basis inequalities (see Lemma \ref{SymPowerBasisLem}), in this setting, are
\begin{equation}\label{FactorConj1zFInequality}
i+nl \geq nt(m+1)
\end{equation}
and
\begin{equation}\label{FactorConj1xyInequality}
i+j \geq t(m+1).
\end{equation}

If $l=0$, then (\ref{FactorConj1zFInequality}) becomes $i \geq nt(m+1)$, which means $H_i$ has a factor of $F^{tm+t}$.
It is clear that $F^t \in \sM^t$, as $\deg F^t = nt \geq t$, and $F^m \in I^{(m)}$, as $F$ vanishes at each of the $n+1$ points.
Thus, $H_i y^j z^l \in \sM^t (I^{(m)})^t$.

Now assume $l\geq 1$.

If $1 \leq l < t$, then $l+\gamma = t$ for some $\gamma \geq 1$.
We see that (\ref{FactorConj1zFInequality}) becomes $i+n(t-\gamma) \geq nt(m+1)$, and thus $i \geq n(tm+\gamma)$.
Then $F^{tm+\gamma}$ is a factor of $H_i$; as before, $F^{tm} \in (I^{(m)})^t$, and, as $l+\gamma = t$, $\deg z^l F^\gamma = l + n\gamma \geq l+\gamma = t$, so $z^l F^\gamma \in \sM^t$, whence $H_i y^j z^l \in \sM^t (I^{(m)})^t$.

If $l \geq mt$, write $l = mt + \gamma$.
Recall that $F = L_1 L_2 \cdots L_n$, where $L_i$ is a linear form vanishing at $p_0$ and $p_i$, for $1\leq i \leq n$.
Set $i = nb+e$, where $0 \leq e < n$.
Thus, we can factor $H_i y^j z^l = x^e F^b y^j z^l = x^e L_1^b L_2^b \cdots L_n^b y^j z^{mt} z^{l-mt}$.
If $j \geq t$, then $y^t \in \sM^t$, so write $H_i y^j z^l = y^t (x^e L_1^b L_2^b \cdots L_n^b y^{j-t} z^{mt} z^{l-mt})$.
As $z^m$ vanishes to order $m$ at the $n$ collinear points, we may group the linear factors of $H_i y^{j-t}$ as $G_1 G_2 \cdots G_t G$, where $\deg G_d = m$ and $\deg G = i+j-t -mt \geq 0$; then $G_d z^m \in I^{(m)}$ for every $d$, $1\leq d \leq t$, so $H_i y^j z^l \in \sM^t (I^{(m)})^t$.
If, on the other hand, $j < t$, set $\delta = t-j$.
Then (\ref{FactorConj1xyInequality}) becomes $i\geq tm + \delta$, so we again factor $H_i = G_1 G_2 \cdots G_t G$, where $\deg G_d = m$, and $\deg G = i-tm \geq \delta$.
Since $\delta + j = t$, $G y^j \in \sM^t$, and, as $G_d$ vanishes to order $m$ at $p_0$, $G_d z^m \in I^{(m)}$ for every $d$, $1\leq d \leq t$.
Thus, $H_i y^j z^l \in \sM^t (I^{(m)})^t$.

Finally, suppose $l$ satisfies $st \leq l < (s+1)t$, where $1\leq s\leq m-1$, and write $l = (s+1)t - \gamma$, $1\leq \gamma \leq t$.
Then (\ref{FactorConj1zFInequality}) becomes $i \geq n(t(m-s)+\gamma)$, so $H_i$ has a factor of $F^{t(m-s)+\gamma}$.
Notice that $F^{m-s} z^s$ vanishes to order $m$ at each of the $n$ collinear points.
We consider two cases: $j\geq t$ and $j < t$.

\textbf{Case 1: } Assume $j \geq t$.
Then it is obvious that $y^t \in \sM^t$.
If $n(m-s)\geq m$, then $F^{m-s}$ vanishes to order $m$ at $p_0$, and $F^{m-s} z^s \in I^{(m)}$, which proves that $H_i y^j z^l \in \sM^t (I^{(m)})^t$.

Suppose now that $n(m-s) < m$.
Set $\delta = j-t$; then $\delta \geq 0$.
Since $i+j\geq t(m+1)$, we know $i+t+\delta \geq t(m+1)$, whence $i+\delta \geq tm$.
This means that we can factor $H_i y^\delta$ into a product of $t$ factors, each vanishing at $p_0$ to order $m$; say $H_i y^\delta = G_1 G_2 \cdots G_t\cdot G$, where $\deg G_d = m$ for $1\leq d \leq t$, and $\deg G = i+\delta - mt$.
Our aim is to do this in such a way so that each $G_d$, when multiplied by a particular power of $z$, will vanish to order $m$ at each of the $n+1$ points.
Note that $i\geq n(t(m-s)+\gamma)$.
Then $F^{t(m-s)+\gamma}$ divides $H_i$, and so $H_i$ has $t$ factors of $F^{m-s}$.
Define $G_1 = F^{m-s} Q_1$, where $Q_1$ is a product of $m-n(m-s)$ linear factors of $H_iy^\delta/F^{t(m-s)}$.
Now recursively define, for each $d$ satisfying $2\leq d\leq t$, $G_d = F^{m-s} Q_d$, where $Q_d$ is a product of $m-n(m-s)$ linear factors of $\frac{H_i y^\delta}{F^{t(m-s)} Q_1 Q_2 \cdots Q_{d-1}}$ (note that this is possible, as $F^{t(m-s)}|H_i$, $i+\delta \geq tm$, each $G_d$ has degree $m$, and the factors $Q_d$ are distinct and chosen so that $Q_d | H_i y^\delta$).
Then $G_d$ vanishes to order $m$ at $p_0$ by construction.
Notice that $G_d z^s \in I^{(m)}$ as $\deg G_d = m$ (and thus vanishes to order $m$ at $p_0$), and $F^{m-s}z^s$ vanishes to order $m$ at $p_1,p_2,\ldots,p_n$.
Therefore, $H_i y^j z^l = z^t G z^{t-\gamma} \prod\limits_{d=1}^t G_d z^s \in \sM^t (I^{(m)})^t$.

\textbf{Case 2: } Now suppose $j < t$, and set $\delta = t-j$.
Recall that $l = (s+1)t - \gamma$, $1\leq \gamma \leq t$ and $1\leq s \leq m-1$ and (\ref{FactorConj1zFInequality}) becomes $i \geq n(t(m-s)+\gamma)$, so $H_i$ has a factor of $F^{t(m-s)+\gamma}$.
It is clear that $F^{m-s} z^s$ vanishes to order $m$ at $p_1,p_2,\ldots,p_n$, as both $F$ and $z$ vanish once at each of the $n$ points.
Moreover, if $n(m-s) \geq m$, $F^{m-s}$ vanishes to order $m$ at $p_0$, so $F^{t(m-s)}z^{st} \in (I^{(m)})^t$.
Since $F^\gamma z^{l-st} = F^\gamma z^{t-\gamma}$ has degree $n\gamma + t -\gamma \geq t$ (since $n\geq 3$), $F^\gamma z^{t-\gamma} \in \sM^t$, and thus $H_i y^j z^l \in \sM^t (I^{(m)})^t$.

Suppose instead that $n(m-s) < m$ and recall that (\ref{FactorConj1zFInequality}) becomes $i \geq n(t(m-s)+\gamma)$ and (\ref{FactorConj1xyInequality}) becomes $i \geq mt + \delta$.
Define $A_1 = F^{m-s} B_1$, where $B_1$ is a product of $m-n(m-s)$ linear factors of $H_i/F^{t(m-s)}$.
Recursively, for $d$ satisfying $1 < d \leq t$, define $A_d = F^{m-s} B_d$, where $B_d$ is a product of $m-n(m-s)$ linear factors of $\dfrac{H_i}{F^{t(m-s)} B_1 B_2 \cdots B_{d-1}}$.
(Note that we can do this, as $\deg H_i = i$ and $\deg A_1 A_2 \cdots A_t = mt$; since $F^{t(m-s)}|H_i$ and $F^{t(m-s)}| A_1 \cdots A_t$, and the other factors of $A_1 \cdots A_t$ are linear factors of $H_i$ [enough linear factors exist, since $i \geq mt$], it follows that $A_1 \cdots A_t | H_i$.)
Then $A_d$ is a form in $x$ and $y$ only of degree $n(m-s) + m - n(m-s) = m$, so $A_d$ vanishes to order $m$ at $p_0$.
Moreover, $A_d z^s$ vanishes to order $m$ at each of the $n+1$ points, so $A_d z^s \in I^{(m)}$.
Let $A$ be the form satisfying $H_i = (A_1 A_2 \cdots A_t) A$; then $\deg A = i - mt \geq \delta$, so $\deg A y^j \geq \delta + j = t$, so $A y^j \in \sM^t$.
Thus, $A y^j \in \sM^t$ and $A_1 A_2 \cdots A_t z^l \in (I^{(m)})^t$, so $H_i y^j z^l \in \sM^t (I^{(m)})^t$.

The containment $I^{(t(m+1)-1)} \subseteq \sM^{t-1} (I^{(m)})^t$ follows similarly.
\end{proof}


\bibliography{PowersOfIdeals}{}

\begin{thebibliography}{BCH}

\bibitem[BCH]{BCH}
C.~{Bocci}, S.~{Cooper}, and B.~{Harbourne}.
\newblock {Containment results for ideals of various configurations of points
  in $\mathbb{P}^{N}$}.
\newblock {\em ArXiv e-prints}, September 2011.

\bibitem[BH1]{BocciHarbourne1}
Cristiano Bocci and Brian Harbourne.
\newblock Comparing powers and symbolic powers of ideals.
\newblock {\em J. Algebraic Geom.}, 19(3):399--417, 2010.

\bibitem[BH2]{BocciHarbourne-Resurgence}
Cristiano Bocci and Brian Harbourne.
\newblock The resurgence of ideals of points and the containment problem.
\newblock {\em Proc. Amer. Math. Soc.}, 138(4):1175--1190, 2010.

\bibitem[ELS]{ELS}
Lawrence Ein, Robert Lazarsfeld, and Karen~E. Smith.
\newblock Uniform bounds and symbolic powers on smooth varieties.
\newblock {\em Invent. Math.}, 144(2):241--252, 2001.

\bibitem[HH1]{HarbourneHuneke}
B.~Harbourne and C.~Huneke.
\newblock Are symbolic powers highly evolved?
\newblock {\em J. Ramanujan Math. Soc.}, (To appear).

\bibitem[HH2]{HochsterHuneke1}
Melvin Hochster and Craig Huneke.
\newblock Comparison of symbolic and ordinary powers of ideals.
\newblock {\em Invent. Math.}, 147(2):349--369, 2002.

\bibitem[HH3]{HochsterHuneke-Fine}
Melvin Hochster and Craig Huneke.
\newblock Fine behavior of symbolic powers of ideals.
\newblock {\em Illinois J. Math.}, 51(1):171--183 (electronic), 2007.

\bibitem[Sch]{Schenzel}
Peter Schenzel.
\newblock Filtrations and {N}oetherian symbolic blow-up rings.
\newblock {\em Proc. Amer. Math. Soc.}, 102(4):817--822, 1988.

\bibitem[ZS]{Zariski-Samuel}
Oscar Zariski and Pierre Samuel.
\newblock {\em Commutative algebra. {V}ol. {II}}.
\newblock Springer-Verlag, New York, 1975.
\newblock Reprint of the 1960 edition, Graduate Texts in Mathematics, Vol. 29.

\end{thebibliography}
\bibliographystyle{alphanum}

\end{document}